\theoremstyle{plain}
\newtheorem{theorem}{Theorem}[section]
\newtheorem{proposition}[theorem]{Proposition}
\newtheorem{corollary}[theorem]{Corollary}
\newtheorem{def-thm}[theorem]{Definition-Theorem}
\newtheorem{lemma}[theorem]{Lemma}
\newtheorem{definition}[theorem]{Definition}
\theoremstyle{definition}
\newcommand{\sq}[1]{\ifx#1([\else\ifx#1)]%
  \else\message{invalid use of "sq"}\fi\fi}
\DeclareMathSymbol{\idot}{\mathbin}{operators}{`\.}
\begin{document}
\title{ The Second Main Theorem in the hyperbolic case}

\author{Min Ru }
\address{Department of Mathematics\\University of Houston\\4800 Calhoun Road, Houston, TX 77204\\USA.}
\email{minru@math.uh.edu}

\author{Nessim Sibony}
\address{Department of Mathematics\\Laboratoire de Math\'ematiques d'Orsay, Univ. Paris-Sud, CNRS, Universit\'e
Paris-Saclay,\\ 91405 Orsay, France.}
\email{Nessim.Sibony@math.u-psud.fr}

\subjclass[2000]{}

\begin{abstract}  We develop   Nevanlinna's  theory for a class of holomorphic maps when the source  is a disc. Such maps appear in the theory of foliations by Riemann Surfaces.
\end{abstract}
\thanks{The first named author was supported in part by the Simon Foundation Awd\# 527078}
\maketitle

\section{Introduction}\label{intro}
In 1929, Nevanlinna \cite{Nev} established the Second Main Theorem for meromorphic functions on the complex plane ${\Bbb C}$.
Later, S. S. Chern \cite{Chern} extended the result to holomorphic mappings from the complex plane
 into  compact Riemann surfaces.  In 1933, H. Cartan \cite{Cartan} developed the theory for 
holomorphic mappings from the complex plane to ${\Bbb P}^n({\Bbb C})$ and studied the intersection with  hyperplanes in 
general position. At the same time, it was observed (first by Nevanlinna) that the results  also hold for 
meromorphic functions  on the unit disc $\bigtriangleup(1)$, under the condition that 
$$\lim_{r \rightarrow 1}{ T_f(r)\over \log {1\over 1-r}}=\infty.$$
Tsuji \cite{Tsuji} gives an exposition of this theory.
In this paper, we introduce a new class of maps from the disc of radius 
$R$  with $0<R\leq \infty$, for which we obtain a Second Main Theorem.  
 Let 
$\bigtriangleup(R)$ denote the  disc of radius $R$ with the 
convention that $\bigtriangleup(\infty)={\Bbb C}$.
 Let $M$ be a Hermitian  manifold and $\omega$ be a positive $(1,1)$ form of finite mass on $M$.
 Recall that, for a non-constant holomorphic map  $f: \bigtriangleup(R) \rightarrow M$, 
 the {\it characteristic (or height) function of $f$ with respect to $\omega$} is defined, for $0<r<R$, as
$$T_{f, \omega}(r)=\int_0^r {dt\over t} \int_{|z|<t} f^*\omega.$$
For each $c<\infty$, let 
\begin{equation}\label{cf}
{\mathcal E}_c=\left\{f~\big|~~\int_0^R \exp(cT_{f, \omega}(r))dr=\infty \right\},
\end{equation}
\begin{equation}
\mathcal E= \cup_{c<\infty} {\mathcal E}_c ~~~\text{and}~~~ {\mathcal E}_0= \cap_{c>0} {\mathcal E}_c .
\end{equation}
Observe that the set  ${\mathcal E}_c$ contains  the maps from the unit disc to $M$ which satisfy, for $r$ close to $1$,
$${T_{f, \omega}(r)\over \log{1\over 1-r}}\ge {1\over c}.$$
This is an important class of maps.  They occur, for example, 
as the universal covering maps of leaves in foliation
by Riemann surfaces.  This is our main motivation. 

Generic foliations in ${\Bbb P}^n({\Bbb C})$ are 
``Brody hyperbolic",  i.e. they do not admit a non-constant image of ${\Bbb C}$ tangent to the foliation
out of the singular points (see \cite{Br} and  \cite{LN}). So leaves are uniformized by the unit disc. It turns out that frequently the uniformizing map is in ${\mathcal E}_c$ . When the  foliation is ``Brody hyperbolic'', we get also that
${T_{f, \omega}(r)\over \log{1\over 1-r}}$ is bounded.

 It is conjectured that, for generic foliations, the leaves are dense. So their distribution is far from trivial. The extension of Cartan's theorem which we obtain can be applied.
 In the next section, we will list some examples. The space $ {\mathcal E}_0$ is the space of maps of fast growth.

\begin{definition}
 Let $M$ be a complex manifold and $\omega$ be a positive $(1,1)$ form of finite volume on 
 $M$. Let $0<R\leq \infty$ and $f: \bigtriangleup(R) \rightarrow M$ be a holomorphic map. 
We define the {\bf growth index} of $f$ with respect to $\omega$ as  
\begin{equation} \label{c1}
c_{f, \omega}: =\inf\left\{c>0~\big|~\int_0^R \exp(cT_{f, \omega}(r))dr=\infty\right\}.
\end{equation}
  The  
critical constant of $M$ with respect to $\omega$, denoted by $c_{cri, M}^{\omega}$, is defined as
$$c_{cri, M}^{\omega} =\inf\{c~|~\exists \text{a non-constant 
holomorphic map}~f: \bigtriangleup(1)\rightarrow M, $$
$$\text{and} ~\int_0^1 \exp(cT_{f, \omega}(r))dr=\infty\}.$$

\end{definition}
In this paper,  whenever $c_{f,\omega}$ is involved, 
we always assume that the set $\left\{c>0~|~\int_0^R \exp(cT_{f, \omega}(r))dr=\infty\right\}$ is non-empty. 
If $f$ is of bounded characteristic (hence $R<\infty$), then $c_{f, \omega}=\infty$.
 In the case where $R=\infty$, noticing that $\int_0^R \exp(\epsilon T_{f, \omega}(r))dr=\infty$ 
for any arbitrary small $\epsilon$ if $f$ is not constant, we get that $c_{f, \omega}=0$ and $f$ is in 
${\mathcal E}_0.$
Thus our results also include the classical 
results for mappings  on the whole complex plane $f:  {\Bbb C}\rightarrow M$.

When $M$ is compact the spaces  $ {\mathcal E}$ and  $ {\mathcal E}_0$ are independent of the form $\omega,$ so they are intrinsic objects.
Indeed we can characterize the Kobayashi hyperbolicity by using   $ {\mathcal E}_0$ (see Theorem \ref{Ko} below) as follows: 
{\it  Let $M$ be a compact complex manifold.  Then $M$ is hyperbolic if and only if 
  the class $\mathcal E_0(\bigtriangleup(1))$ is empty }. 

The Second Main Theorems will be derived for  maps  $f: \bigtriangleup(R) \rightarrow M$ with 
$c_{f,\omega}<\infty$. In particular, we
derive the defect for $f$ in $M$ in terms of $c_{f, \omega}$.
  In the  case where $M$ is hyperbolic, for example $M$ is a Riemann surface of genus 
$\ge 2$, there is no non-constant holomorphic map
$f: {\Bbb C}\rightarrow M$. However, there are many non-constant  holomorphic maps 
$f: \bigtriangleup(1)\rightarrow M$ which are in $\mathcal E$. Our result (see Theorem \ref{g}) shows that if $c_{f, \omega_P}<\infty$, then 
$\sum_{j=1}^ q \delta_{f, \omega_P}(a_j)\leq c_{f, \omega_P}-1$ for any distinct points 
 $a_1, \dots, a_q\in M$.
Here $\omega_P$ is the Poincar\'e form on $M$ and $\delta_{f, \omega_P}(a)$ is 
the defect properly measured.  This is a  new  phenomenon. We also get a similar result  for 
a compact Riemann surface with finitely many points removed.

The theory  here can be regarded as  a new illustration of Bloch's principle: 
 {\it Nihil est in infinito quod non prius fuerit in finito}. This is explained as: every proposition with a statement  on the actual infinity  can be always considered a consequence of a proposition in finite terms.

We introduce some notations. 
For a complex variable $z$,  let 
$$\partial u = {\partial u \over \partial z} dz, ~~
\bar{\partial} u = {\partial u \over \partial \bar{z}} d\bar{z}.$$
Let $d= \partial +\bar{\partial}, d^c={\sqrt{-1}\over 4\pi} (\bar{\partial}-\partial)$.
We have
 $dd^c={\sqrt{-1}\over 2\pi} \partial \bar{\partial}$.
Let $M$ be a Riemann surface. 
 Let $\omega = a(z)  {\sqrt{-1}\over 2\pi} dz\wedge d{\bar z}$
be a  non-negative $(1,1)$ form on $M$.
Let $\mbox{Ric} (\omega):=dd^c\log a$.  Then we have 
$$\mbox{Ric} (\omega) = - K \omega,$$
where $K$ is the Gauss curvature of the metric form $\omega$. 
For example, on the unit  disc $\bigtriangleup(1)$, 
the Poincar\'e  metric  form $\omega = {2\over
(1- |z|^2)^2} {\sqrt{-1}\over 2\pi} dz \wedge d{\bar z}$
has Gauss curvature $-1$.

We state our results. For notations, see Section 2.
\begin{theorem}[The Second Main Theorem]\label{tha}
Let $M$ be a compact Riemann surface.
Let $\omega$ be a  smooth positive (1,1) form on $M$. 
Let 
$f:  \bigtriangleup(R)\rightarrow  M$ be a 
holomorphic map with $c_{f, \omega}<+\infty$, where $0<R\leq \infty$. 
Let $a_1, \dots, a_q$ be distinct points on $M$. 
  Then, for
 every  $\epsilon > 0$, the inequality
\begin{eqnarray*}
&~& \sum_{j=1}^q m_{f, \omega}(r, a_j) + T_{f, \mbox{Ric}(\omega)}(r)  + N_{f, \mbox{ram}}(r) \\
&~&\leq   (1+\epsilon) (c_{f, \omega}+\epsilon) T_{f, \omega}(r) + O(\log  T_{f, \omega}(r))
+\epsilon  \log r
\end{eqnarray*} holds for all $r \in (0, R)$  outside a set $E$ 
with 
$\int_E \exp((c_{f, \omega}+\epsilon) T_{f, \omega}(r))dr <\infty$. Here $N_{f, \mbox{ram}}(r)$ is the counting function for the ramification divisor of $f$.
\end{theorem}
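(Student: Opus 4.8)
The plan is to run the classical ``method of negative curvature'' proof of the Second Main Theorem for maps to a Riemann surface (Nevanlinna--Chern, in the Carlson--Griffiths formulation) on the disc $\bigtriangleup(R)$, the one essential modification being that the Borel-type lemma controlling the exceptional set must be engineered so as to produce a set $E$ with $\int_E\exp((c_{f,\omega}+\epsilon)T_{f,\omega}(r))\,dr<\infty$, rather than merely one of finite Lebesgue measure. Throughout I would use the First Main Theorem and the Jensen--Green formula on $\bigtriangleup(r)$ from Section 2; the case $R=\infty$, in which $c_{f,\omega}=0$, then reproduces the classical statement.

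The first step is to introduce a singular volume form on $M$. For each $j$ pick a holomorphic section $\sigma_j$ of $\mathcal O_M(a_j)$ with divisor $a_j$, equip $\mathcal O_M(a_j)$ with a smooth metric normalised so that $\|\sigma_j\|\le e^{-1}$ on $M$, and put
$$\Psi=\frac{\omega}{\prod_{j=1}^q\|\sigma_j\|^2\,(\log\|\sigma_j\|^2)^2},$$
a volume form with $\int_M\Psi<\infty$. By the Poincar\'e--Lelong formula and a direct computation of $dd^c\log(\log\|\sigma_j\|^2)^{-2}$ one gets $\mbox{Ric}(\Psi)=\mbox{Ric}(\omega)+\sum_{j=1}^q([a_j]-c_1(\mathcal O_M(a_j)))+\Theta$ with $\Theta$ smooth off the $a_j$. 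Writing $f^*\Psi=\Lambda\,\tfrac{\sqrt{-1}}{2\pi}dz\wedge d\bar z$ we have $dd^c\log\Lambda=f^*\mbox{Ric}(\Psi)+R_f$, where $R_f$ is the ramification divisor of $f$; applying Jensen--Green on $\bigtriangleup(r)$, using the First Main Theorem to cancel the terms $N_f(r,a_j)$ coming from $f^{-1}(a_j)$ against $\sum_jT_{f,c_1(\mathcal O_M(a_j))}(r)$, and the elementary bound $\int_{|z|=r}\log\log(1/\|\sigma_j\circ f\|)\,\tfrac{d\theta}{2\pi}=O(\log T_{f,\omega}(r))$ (concavity of $\log$ and $m_{f,\omega}(r,a_j)\le T_{f,\omega}(r)+O(1)$), one obtains
$$\sum_{j=1}^q m_{f,\omega}(r,a_j)+T_{f,\mbox{Ric}(\omega)}(r)+N_{f,\mbox{ram}}(r)\le\int_{|z|=r}\log\Lambda\,\frac{d\theta}{2\pi}+O(\log T_{f,\omega}(r))+O(1).$$

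Now by concavity $\int_{|z|=r}\log\Lambda\,\tfrac{d\theta}{2\pi}\le\log\big(\int_{|z|=r}\Lambda\,\tfrac{d\theta}{2\pi}\big)$, and $\int_{|z|=r}\Lambda\,\tfrac{d\theta}{2\pi}=\tfrac{1}{2r}\tfrac{d}{dr}\big(r\tfrac{d}{dr}T_{f,\Psi}(r)\big)$, so the left-hand side of the theorem is at most $\log\tfrac{d}{dr}\big(r\tfrac{d}{dr}T_{f,\Psi}(r)\big)+O(\log T_{f,\omega}(r))+O(1)$. Since $M$ is compact, $\Psi-\lambda\omega=dd^c w$ with $\lambda=\int_M\Psi/\int_M\omega$ and $w$ a function having at worst $\log\log$-singularities at the $a_j$, so $T_{f,\Psi}(r)=\lambda T_{f,\omega}(r)+O(\log T_{f,\omega}(r))$, and hence the growth index satisfies $\lambda c_{f,\Psi}=c_{f,\omega}$; everything can therefore be read back in terms of $T_{f,\omega}$ and $c_{f,\omega}$. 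The decisive tool is then the calculus lemma: if $\phi:(0,R)\to(0,\infty)$ is nondecreasing with $\phi(r_0)>0$, then for every $\delta>0$,
$$\phi'(r)\le\phi(r)^{1+\delta}\exp\big((c_{f,\omega}+\delta)T_{f,\omega}(r)\big)$$
for $r$ outside a set $E$ with $\int_E\exp((c_{f,\omega}+\delta)T_{f,\omega}(r))\,dr<\infty$ --- indeed on $E$ the left side dominates the right, so $\int_E\exp((c_{f,\omega}+\delta)T_{f,\omega})\,dr<\int_E\phi'\phi^{-1-\delta}\,dr\le\delta^{-1}\phi(r_0)^{-\delta}$. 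Applying this to $\phi(r)=r\tfrac{d}{dr}T_{f,\Psi}(r)=\int_{\bigtriangleup(r)}f^*\Psi$, treating the remaining term $\log\big(r\tfrac{d}{dr}T_{f,\Psi}(r)\big)$ by a second application together with the convexity of $T_{f,\Psi}$ in $\log r$ (passing to the variable $\log r$ when $R<\infty$), and choosing $\delta$ and the normalisation $\lambda$ of $\Psi$ in a coordinated way, one arrives at
$$\log\frac{d}{dr}\Big(r\frac{d}{dr}T_{f,\Psi}(r)\Big)\le(1+\epsilon)(c_{f,\omega}+\epsilon)\,T_{f,\omega}(r)+O(\log T_{f,\omega}(r))+\epsilon\log r$$
off an exceptional set of the required type, which together with the two displays above is the theorem.

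The main obstacle is precisely this last step. The calculus lemma itself is elementary, but one must apply it --- once for each of the two integrations in the definition of the height --- so that \emph{simultaneously} the exceptional set keeps the strong bound $\int_E\exp((c_{f,\omega}+\epsilon)T_{f,\omega})\,dr<\infty$ \emph{and} the coefficient of $T_{f,\omega}(r)$ is held down to $(1+\epsilon)(c_{f,\omega}+\epsilon)$; this forces a careful coordination of the auxiliary exponent $\delta$, the normalisation of $\Psi$, and the order in which the two integrations are unwound, and it is here that the hypothesis $c_{f,\omega}<+\infty$ is genuinely used (when $c_{f,\omega}=\infty$ both the error term and the exceptional-set bound become vacuous). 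By contrast, the curvature bookkeeping of the second paragraph and the concavity reduction of the third are routine once $\Psi$ has been chosen.
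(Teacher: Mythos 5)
Your proposal is correct and follows essentially the same route as the paper: a Carlson--Griffiths-type singular volume form $\Psi$ concentrated at the $a_j$ (the paper builds it from the positive solutions $u_{a_j}$ of $dd^c u=\omega-\delta_{a_j}$, namely $\Psi=C\prod_j u_{a_j}^{-2}e^{u_{a_j}}\,\omega$, which is the same object as your $\omega/\prod_j\|\sigma_j\|^2(\log\|\sigma_j\|^2)^2$), then Poincar\'e--Lelong plus Green--Jensen and the First Main Theorem, concavity of $\log$, and the calculus lemma applied twice with $\gamma(r)=\exp((c_{f,\omega}+\epsilon)T_{f,\omega}(r))$ and $\delta=2\epsilon$, which is exactly how the paper obtains the exceptional set with $\int_E\exp((c_{f,\omega}+\epsilon)T_{f,\omega}(r))\,dr<\infty$. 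The one step you handle differently is the comparison of $T_{f,\Psi}$ with $T_{f,\omega}$: you solve $\Psi-\lambda\omega=dd^c w$ with $w$ having $\log\log$ singularities and apply Green--Jensen, whereas the paper normalizes $\int_M\Psi=1$ and uses the Ahlfors--Chern averaging $T_{f,\Psi}(r)=\int_M N_f(r,a)\,\Psi(a)\le T_{f,\omega}(r)+O(1)$; both work. One bookkeeping caution: your last two displays each drop the factor $\tfrac12$ carried by the Green--Jensen formula (the left-hand side is bounded by $\tfrac12\int_0^{2\pi}\log\Lambda\,\tfrac{d\theta}{2\pi}+O(\log T_{f,\omega}(r))$, while the calculus lemma gives $(2+\delta)(c_{f,\omega}+\epsilon)T_{f,\omega}(r)+(1+\delta)\log r$ for $\log\tfrac{d}{dr}\bigl(r\tfrac{d}{dr}T_{f,\Psi}(r)\bigr)$, not the halved quantities you wrote); the two omissions compensate, and it is precisely this $\tfrac12$ that turns $(2+2\epsilon)(c_{f,\omega}+\epsilon)$ and $2\epsilon\log r$ into the stated $(1+\epsilon)(c_{f,\omega}+\epsilon)$ and $\epsilon\log r$.
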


\noindent{\bf Remarks}. (a) We note that in the case where $R= \infty$ we have $c_{f, \omega}=0$, so we recover the usual
Second Main Theorem for $f:  {\Bbb C} \rightarrow  M$ (due to Chern) with a better error term: $\epsilon  \log r$.
The error term is  $\epsilon \log r$ rather than $O(\log r)$, so  we don't need anymore to assume that  $f$ is transcendental.

(b) The above theorem also holds for an open set  $U$ in a compact Riemann surface $M$ such that $M\backslash U$ is a set of finite number of points. To get positive results, we need to consider a metric defined only in $U$. See the remark after Theorem \ref{1}.

(c) Note that we can also let $c_{f, \omega}$ depend on $r$, i.e., we can consider the $c(r)>0$
with $$\int_0^R \exp(c(r) T_{f, \omega}(r))dr=\infty.$$
We then get similar results.

In the case where  $M={\Bbb P}^1({\Bbb C})$,
since $$\omega_{FS} = {1\over (1+|w|^2)^2} {\sqrt{-1}\over 2\pi} dw
\wedge d{\bar w} = dd^c \log (1+|w|^2),$$
we get that $$\text{Ric} (\omega_{FS}) = -2  \omega_{FS}.$$
Hence Theorem \ref{tha} gives 
\begin{theorem}\label{1}  Let
 $f: \bigtriangleup(R) \rightarrow {\Bbb P}^1({\Bbb C})$ be a 
 holomorphic map such that $c_f<+\infty$, where $c_f:=c_{f, \omega_{FS}}$ and  $0<R\leq \infty$. Let $a_1, \dots, a_q$ be distinct points on ${\Bbb P}^1({\Bbb C})$. Then 
\begin{equation}\label{defect}\sum_{j=1}^q \delta_f(a_j)\leq 2+ c_f.\end{equation}
 In particular, $f$ cannot omit more than $[2+c_f]$ points in  ${\Bbb P}^1({\Bbb C})$ if $c_f$ is finite.
\end{theorem}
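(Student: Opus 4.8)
The plan is to derive the statement directly from Theorem \ref{tha}, applied with $M={\Bbb P}^1({\Bbb C})$ and $\omega=\omega_{FS}$. Since the characteristic function is additive and homogeneous in the form, the identity $\mbox{Ric}(\omega_{FS})=-2\omega_{FS}$ recorded above gives $T_{f,\mbox{Ric}(\omega_{FS})}(r)=-2T_f(r)$, where $T_f:=T_{f,\omega_{FS}}$. Substituting this into the Second Main Theorem, moving $-2T_f(r)$ to the right-hand side, and discarding the nonnegative term $N_{f,\mbox{ram}}(r)$, one obtains, for every $\epsilon>0$,
\[
\sum_{j=1}^q m_{f,\omega_{FS}}(r,a_j)\le\bigl(2+(1+\epsilon)(c_f+\epsilon)\bigr)T_f(r)+O(\log T_f(r))+\epsilon\log r,
\]
valid for all $r\in(0,R)$ outside an exceptional set $E=E_\epsilon$ with $\int_E\exp((c_f+\epsilon)T_f(r))\,dr<\infty$.

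Next I would collect the ingredients needed to pass to the limit $r\to R$ in this inequality. First, since $c_f<\infty$ the map $f$ does not have bounded characteristic, so $T_f(r)\to\infty$ and hence $O(\log T_f(r))/T_f(r)\to 0$. Second, for a non-constant $f$ one has $\int_{|z|<t}f^*\omega_{FS}\ge\kappa>0$ for $t$ large, so $T_f(r)\ge\kappa\log r+O(1)$; consequently $\log r/T_f(r)$ remains bounded as $r\to R$, and the contribution of $\epsilon\log r$ after division by $T_f(r)$ is $O(\epsilon)$ (when $R<\infty$ the term $\epsilon\log r$ is trivially bounded, and when $R=\infty$ one moreover has $c_f=0$, recovering the classical defect relation). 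Third, the set $E$ is not cofinal in $(0,R)$: by the definition of $c_f$, for $c=c_f+\epsilon>c_f$ the integral $\int_0^R\exp(cT_f(r))\,dr$ diverges, and since the integrand is finite on every $[0,r']\subset[0,R)$ this divergence is concentrated at $R$; comparing with $\int_E\exp((c_f+\epsilon)T_f(r))\,dr<\infty$ shows that $(R-\delta,R)\setminus E\ne\emptyset$ for every $\delta>0$, so one can choose $r_k\to R$ with $r_k\notin E$.

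Finally I would argue by contradiction. If $\sum_{j=1}^q\delta_f(a_j)>2+c_f$, then, by the definition of the defect (and the First Main Theorem), for any fixed $\eta>0$ we have $\sum_{j=1}^q m_{f,\omega_{FS}}(r,a_j)/T_f(r)>\sum_{j=1}^q\delta_f(a_j)-\eta$ for all $r$ close to $R$; dividing the displayed inequality by $T_f(r)$, evaluating along $r_k\notin E$ and using the three facts above yields $\sum_{j=1}^q\delta_f(a_j)-\eta\le 2+(1+\epsilon)(c_f+\epsilon)+C\epsilon$ for a constant $C$ independent of $\epsilon$ and $\eta$; letting $\epsilon\to 0$ and then $\eta\to 0$ gives a contradiction, proving \eqref{defect}. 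The assertion about omitted points is then immediate: an omitted value $a_j$ satisfies $\delta_f(a_j)=1$, so $q$ omitted values force $q\le 2+c_f$, whence $q\le[2+c_f]$ since $q$ is an integer. I do not anticipate a genuine difficulty, as Theorem \ref{tha} supplies the substance; the only step that requires attention is the third one, namely the interplay between the exceptional set and the lower bound $T_f(r)\ge\kappa\log r+O(1)$, which together guarantee that both the exceptional set and the additive error terms become negligible in the limit.
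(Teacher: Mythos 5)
Your argument is correct and is essentially the paper's own route: Theorem \ref{tha} applied with $M={\Bbb P}^1({\Bbb C})$ and $\mathrm{Ric}(\omega_{FS})=-2\,\omega_{FS}$, followed by dropping $N_{f,\mathrm{ram}}(r)\ge 0$ and passing to defects. The paper leaves the limiting details implicit, and the ones you supply (unboundedness of $T_f$ when $c_f<\infty$, the bound $\epsilon\log r=O(\epsilon)T_f(r)$, and the non-cofinality of the exceptional set because $\int_0^R\exp((c_f+\epsilon)T_f(r))\,dr=\infty$ while $\int_E\exp((c_f+\epsilon)T_f(r))\,dr<\infty$) are exactly the right ones.
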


\noindent{\bf Remark}. Let $U$ be an open subset of $
{\Bbb P}^1({\Bbb C})$ such that ${\Bbb P}^1({\Bbb C})\backslash U$ is an infinite set. 
Let $\phi$ denote the universal covering map $\phi: \bigtriangleup(1) \rightarrow U$. 
From the fact that the image of $\phi$ omits infinitely many points in $ {\Bbb P}^1({\Bbb C})$, 
Theorem \ref{1} tells us that $c_f=\infty$. If ${\Bbb P}^1({\Bbb C})\backslash U$ is finite, then 
Theorem \ref{1} implies that 
 $c_f \ge (q-2)$ where $q=\#({\Bbb P}^1({\Bbb C})\backslash U)$. 

In the 
 elliptic case, the canonical metric 
is flat , i.e. there exists a positive (1,1) form $\omega$ whose curvature is 
0, so 
$\mbox{Ric}(\omega) =0$. As a consequence of Theorem \ref{tha}, we get
\begin{theorem}\label{2} Let $M$ be  a compact Riemann surface of genus 1 and 
 $\omega$ be the  positive (1,1) form with $\mbox{Ric}(\omega) =0$. 
 Let $f: \bigtriangleup(R) \rightarrow M$ be a  holomorphic map with 
$c_{f, \omega}<\infty$, where $0<R\leq \infty$.
Then $$\sum_{j=1}^q \delta_{f,\omega}(a_j) \leq c_{f, \omega}.$$
In particular,  
 $f$ cannot omit more than $[c_{f, \omega}]$ points in  $M$ if $c_{f, \omega}$ is finite.
\end{theorem}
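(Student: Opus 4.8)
The plan is to read off Theorem \ref{2} directly from Theorem \ref{tha} by specializing to the flat metric. Since $M$ has genus $1$ and $\omega$ is chosen with $\mbox{Ric}(\omega)=0$, the pull-back $f^*\mbox{Ric}(\omega)$ vanishes identically, so $T_{f,\mbox{Ric}(\omega)}(r)\equiv 0$; discarding also the nonnegative counting term $N_{f,\mbox{ram}}(r)$, Theorem \ref{tha} gives, for every $\epsilon>0$,
\[
\sum_{j=1}^q m_{f,\omega}(r,a_j)\ \le\ (1+\epsilon)(c_{f,\omega}+\epsilon)\,T_{f,\omega}(r)+O(\log T_{f,\omega}(r))+\epsilon\log r
\]
for all $r\in(0,R)$ outside a set $E=E_\epsilon$ with $\int_{E}\exp\big((c_{f,\omega}+\epsilon)T_{f,\omega}(r)\big)\,dr<\infty$.

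Next I would record two elementary growth facts for $T:=T_{f,\omega}$. First, $T$ is nondecreasing, so $\lim_{r\to R}T(r)$ exists in $[0,\infty]$; were it finite, $f$ would be of bounded characteristic and $c_{f,\omega}=\infty$, contrary to hypothesis, hence $T(r)\to\infty$. Second, $\log r/T(r)$ remains bounded as $r\to R$: if $R<\infty$ this is immediate since $\log r$ is bounded near $R$ while $T(r)\to\infty$, and if $R=\infty$ then, $f$ being non-constant into the compact surface $M$, $\int_{|z|<t}f^*\omega$ is bounded below by a positive constant for $t$ large, so $T(r)\ge c_0\log r+O(1)$ with $c_0=c_0(f)>0$; in all cases there is a constant $C=C(f)$, independent of $\epsilon$, with $\limsup_{r\to R}\log r/T(r)\le C$.

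Then I would check that $E$ cannot contain a full one-sided neighbourhood of $R$: since $\exp\big((c_{f,\omega}+\epsilon)T(r)\big)\ge 1$, we have $|E\cap(R-\delta,R)|\le\int_{E\cap(R-\delta,R)}\exp\big((c_{f,\omega}+\epsilon)T(r)\big)\,dr\to 0$ as $\delta\to 0$, so $(R-\delta,R)\setminus E$ has positive measure for all small $\delta$. Picking $r_n\to R$ with $r_n\notin E$, dividing the displayed inequality by $T(r_n)>0$, using that $\delta_{f,\omega}(a)=\liminf_{r\to R}m_{f,\omega}(r,a)/T(r)$ is $\le$ its $\liminf$ along the subsequence $(r_n)$, and combining superadditivity of $\liminf$ over the finite sum with the two growth facts to absorb the error terms, I would obtain
\[
\sum_{j=1}^q\delta_{f,\omega}(a_j)\ \le\ \liminf_{n\to\infty}\sum_{j=1}^q\frac{m_{f,\omega}(r_n,a_j)}{T(r_n)}\ \le\ (1+\epsilon)(c_{f,\omega}+\epsilon)+C\epsilon .
\]
Letting $\epsilon\downarrow 0$ yields $\sum_{j=1}^q\delta_{f,\omega}(a_j)\le c_{f,\omega}$. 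For the final assertion: if $f$ omits $a$ then the First Main Theorem gives $m_{f,\omega}(r,a)=T(r)+O(1)$, so $\delta_{f,\omega}(a)=1$; hence if $f$ omits $q$ distinct points then $q=\sum_j\delta_{f,\omega}(a_j)\le c_{f,\omega}$, and since $q\in\mathbb{Z}$ this forces $q\le[c_{f,\omega}]$.

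The whole argument is bookkeeping once Theorem \ref{tha} is in hand; the only points requiring genuine care are (i) ensuring that the exceptional set $E$ leaves room to approach $R$ — handled above through the convergence of $\int_E\exp((c_{f,\omega}+\epsilon)T)\,dr$ — and (ii) controlling the $\epsilon\log r$ error term when $R=\infty$, which is precisely where the lower bound $T(r)\gtrsim\log r$ for a non-constant map into a compact Riemann surface enters. I do not expect any serious obstacle beyond these routine points.
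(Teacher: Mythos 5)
Your proposal is correct and is exactly the paper's route: Theorem \ref{2} is stated there as an immediate consequence of Theorem \ref{tha} with $\mbox{Ric}(\omega)=0$, and your write-up just supplies the routine bookkeeping (dropping $N_{f,\mbox{ram}}\ge 0$, choosing $r_n\to R$ off the exceptional set, absorbing the $O(\log T)$ and $\epsilon\log r$ terms using $T(r)\to\infty$ and $T(r)\gtrsim\log r$ when $R=\infty$, then letting $\epsilon\downarrow 0$). No gaps.
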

In the case where  the compact Riemann surface is of genus $\ge 2$,
there is  a positive (1,1) form $\omega$ whose curvature $-1$ so 
$\text{Ric}(\omega)=\omega$. 
 We get the following result using a variation of the proof of Theorem \ref{tha}.
\begin{theorem}\label{g}  
 Let $U$ be either a compact Riemann surface or a Riemann surface  in a compact Riemann surface $M$ such that $M\backslash U$ consists of  a finite number of points.
Let $\omega$ be a positive  (1,1) form of finite volume on $U$ whose Gauss curvature is  bounded from 
above by $-\lambda$ with $\lambda>0$, i.e. $\mbox{Ric}(\omega) \ge \lambda\omega$. 
Let 
 $f: \bigtriangleup(R) \rightarrow  U$ be a  holomorphic map with $c_{f, \omega}<\infty$, 
where $0<R\leq \infty$. Then  $c_{f, \omega}\ge \lambda$. Furthermore,
 let $a_1, \dots, a_q$ be distinct points on $U$, then, for
 every  $\epsilon > 0$, the inequality
\begin{eqnarray*}
&~& \sum_{j=1}^q m_{f, \omega}(r, a_j)  +  N_{f, \mbox{ram}}(r) \\
&~&\leq   ((1+\epsilon)(c_{f, \omega}+\epsilon) -\lambda) T_{f, \omega}(r) + O(\log  T_{f, \omega}(r))
+\epsilon  \log r
\end{eqnarray*}
 holds for all $r \in (0, R)$  outside a set $E$ 
with 
$\int_E \exp((c_{f, \omega}+\epsilon) T_{f, \omega}(r))dr <\infty$.
In particular,  we have $$\sum_{j=1}^q \delta_{f, \omega}(a_j)\leq  c_{f, \omega}-\lambda.$$
  \end{theorem}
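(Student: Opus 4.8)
\emph{Proof plan.}

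\textbf{Overall strategy.} The plan is to derive everything from (a suitable re-reading of) Theorem~\ref{tha}. When $U$ is a compact Riemann surface the form $\omega$ is smooth and positive on it, so Theorem~\ref{tha} applies directly and the extra assertions follow by combining it with the curvature hypothesis $\mbox{Ric}(\omega)\geq\lambda\omega$. When $U$ is non-compact I would re-run the proof of Theorem~\ref{tha}, the only change being to track the behaviour of the auxiliary metric near the finitely many punctures of $M\setminus U$. The bound $c_{f,\omega}\geq\lambda$ I would prove separately and first, by a Schwarz-lemma argument (so in particular $f$ is always understood to be nonconstant).

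\textbf{The inequality $c_{f,\omega}\geq\lambda$.} I would observe that $f^{*}\omega$ is a conformal pseudometric on $\bigtriangleup(R)$, smooth and positive off the ramification points, and that there $\mbox{Ric}(f^{*}\omega)\geq f^{*}\mbox{Ric}(\omega)\geq\lambda f^{*}\omega$ (the ramification term only helps), so its Gauss curvature is $\leq-\lambda<0$. If $R=\infty$ the Ahlfors--Schwarz lemma forces $f^{*}\omega\equiv0$, so $f$ is constant, contrary to assumption; hence $R<\infty$ and, after rescaling, $R=1$. On $\bigtriangleup(1)$ the Ahlfors--Schwarz lemma gives $f^{*}\omega\leq\lambda^{-1}\omega_{P}$, where $\omega_{P}$ is the Poincar\'e form of Gauss curvature $-1$; integrating twice, $T_{f,\omega}(r)\leq\lambda^{-1}\log\frac{1}{1-r^{2}}+O(1)$. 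Therefore, for every $c<\lambda$, $\exp(cT_{f,\omega}(r))=O\bigl((1-r^{2})^{-c/\lambda}\bigr)$ is integrable on $[0,1)$, so $\int_{0}^{1}\exp(cT_{f,\omega}(r))\,dr<\infty$; letting $c\uparrow\lambda$ gives $c_{f,\omega}\geq\lambda$. (This also drops out of the main inequality below with $q=0$, since $N_{f,\mbox{ram}}(r)\geq0$.)

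\textbf{The second main inequality, and the defect relation.} If $U$ is compact, Theorem~\ref{tha} gives
\[
\sum_{j=1}^{q}m_{f,\omega}(r,a_{j})+T_{f,\mbox{Ric}(\omega)}(r)+N_{f,\mbox{ram}}(r)\leq(1+\epsilon)(c_{f,\omega}+\epsilon)T_{f,\omega}(r)+O(\log T_{f,\omega}(r))+\epsilon\log r
\]
off the stated set $E$; since $\mbox{Ric}(\omega)\geq\lambda\omega$ forces $T_{f,\mbox{Ric}(\omega)}(r)\geq\lambda T_{f,\omega}(r)$, transposing $\lambda T_{f,\omega}(r)$ to the right yields exactly the claimed estimate. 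For $U=M\setminus\{p_{1},\dots,p_{s}\}$ non-compact I would reprove the displayed inequality by following the Carlson--Griffiths/Chern argument behind Theorem~\ref{tha}. Introduce the auxiliary singular metric
\[
\Omega=\frac{\omega}{\prod_{j=1}^{q}\|s_{j}\|^{2}\bigl(\log(c_{0}/\|s_{j}\|^{2})\bigr)^{2}},
\]
with $s_{j}$ a global holomorphic section of $\OO_{M}(a_{j})$ vanishing simply at $a_{j}$, $\|\cdot\|$ smooth, and $c_{0}$ large enough that $\log(c_{0}/\|s_{j}\|^{2})\geq1$. Then $\int_{U}\Omega<\infty$: the extra factors create only an integrable $|w|^{-2}(\log|w|)^{-2}$ singularity at each $a_{j}$ and are smooth and bounded near each $p_{i}$, so $\int_{U}\Omega\leq C\int_{U}\omega<\infty$ (this is where finite volume enters), and a local curvature computation gives, after enlarging $c_{0}$,
\[
\mbox{Ric}(\Omega)\geq\mbox{Ric}(\omega)+\sum_{j=1}^{q}[a_{j}]-\epsilon_{0}\,\omega\geq(\lambda-\epsilon_{0})\omega+\sum_{j=1}^{q}[a_{j}].
\]
Applying Jensen's formula to the logarithm of the density of $f^{*}\Omega$ produces, on the one side, $N_{f,\mbox{ram}}(r)+T_{f,\mbox{Ric}(\omega)}(r)+\sum_{j}N_{f}(r,a_{j})$ up to harmless $O(\log T_{f,\omega}(r))$ terms from the $\log^{2}$-factors, and, on the other side, a boundary integral over $|z|=r$ which, by the definition of the proximity functions and the First Main Theorem (used to trade $\sum_{j}N_{f}(r,a_{j})$ for $\sum_{j}m_{f,\omega}(r,a_{j})$), contributes $\sum_{j}m_{f,\omega}(r,a_{j})$ plus $\tfrac12\int_{|z|=r}\log(\text{density of }f^{*}\omega)\tfrac{d\theta}{2\pi}$; one checks here that $f(\partial\bigtriangleup(r))$ stays in a compact subset of $U$, so the punctures are invisible. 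Bounding the last integral by the concavity of $\log$ and feeding the result into the growth-index calculus lemma underlying Theorem~\ref{tha}---the Borel-type lemma applied twice: if $c_{f,\omega}<\infty$ then the relevant quantities are $\leq(1+\epsilon)(c_{f,\omega}+\epsilon)T_{f,\omega}(r)+O(\log T_{f,\omega}(r))+\epsilon\log r$ outside a set $E$ with $\int_{E}\exp((c_{f,\omega}+\epsilon)T_{f,\omega}(r))\,dr<\infty$---reproduces the inequality of Theorem~\ref{tha} in the present setting (with $\epsilon_{0}$ absorbed into $\epsilon$), and transposing $\lambda T_{f,\omega}(r)$ gives the estimate. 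Finally, dividing by $T_{f,\omega}(r)$, passing to a sequence $r_{k}\to R$ off $E$ along which $T_{f,\omega}(r_{k})\to\infty$ (so $\log T_{f,\omega}(r_{k})=o(T_{f,\omega}(r_{k}))$ and $\log r_{k}=O(1)$), and letting $\epsilon\downarrow0$ yields $\sum_{j}\delta_{f,\omega}(a_{j})\leq c_{f,\omega}-\lambda$.

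\textbf{The main obstacle.} The technical heart is the non-compact case: one must push the Chern-type machinery through with $\omega$ itself singular along $M\setminus U$, verifying that $\int_{U}\Omega<\infty$ and that the Jensen and boundary terms never feel the punctures (the finite-volume hypothesis being exactly what makes this work), and confirming that the growth-index calculus lemma delivers precisely the error $O(\log T_{f,\omega}(r))+\epsilon\log r$ together with an exceptional set $E$ satisfying $\int_{E}\exp((c_{f,\omega}+\epsilon)T_{f,\omega}(r))\,dr<\infty$, rather than a weaker statement. The curvature bookkeeping organized around $\mbox{Ric}(\omega)\geq\lambda\omega$ is, by comparison, routine.
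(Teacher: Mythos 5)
Your treatment of the compact case (quoting Theorem \ref{tha} and using $\mbox{Ric}(\omega)\ge\lambda\omega$ to get $T_{f,\mbox{Ric}(\omega)}(r)\ge\lambda T_{f,\omega}(r)$ and transpose) is fine, and your Ahlfors--Schwarz derivation of $c_{f,\omega}\ge\lambda$ is correct and in fact more direct than the paper, which obtains this bound as a by-product of the second main inequality. Note that the paper's own proof of the whole theorem is simply the proof of Theorem \ref{tha} rerun verbatim with the auxiliary form $\Psi=C\bigl(\prod_j u_{a_j}^{-2}\exp(u_{a_j})\bigr)\omega$ built from the positive solutions $u_{a_j}$ of equation (\ref{Poisson}) supplied by Theorem 3.4(b); every quantity stays intrinsic to $(U,\omega)$.

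The gap is in your non-compact case. You switch to a Carlson--Griffiths form built from sections $s_j$ and smooth metrics on the compactification $M$, and you dispose of all the resulting discrepancies with the assertion that ``$f(\partial\bigtriangleup(r))$ stays in a compact subset of $U$, so the punctures are invisible.'' That assertion is false in general: for the uniformizing map $\phi:\bigtriangleup(1)\rightarrow U$ of a punctured compact surface (which has $c_{\phi,\omega_P}=1$ and is exactly the kind of map the theorem targets) the images of the circles $|z|=r$ accumulate at the cusps as $r\to 1$. Without it, nothing identifies your proximity and height functions with those in the statement: the theorem's $m_{f,\omega}(r,a_j)$ is defined by (\ref{proximity}) through the solution $u_{a_j}$ of (\ref{Poisson}) on $U$, which is unbounded near $M\backslash U$, whereas your Jensen/First-Main-Theorem bookkeeping produces the proximity $\int\log(1/\|s_j\circ f\|)\,d\theta/2\pi$ and the height relative to a smooth Chern form on $M$; the differences are boundary integrals of functions that blow up at the punctures, hence are proximity functions to $M\backslash U$ and a priori not $O(1)$, nor even $O(\log T_{f,\omega}(r))$. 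The same issue affects your bound on the height of $f^{*}\Omega$ via $\int_U N_f(r,a)\,\Omega(a)$, since the uniform First Main Theorem you invoke there is relative to the smooth metric on $M$, not to $\omega$; an extra comparison of the two height functions near the punctures is needed. Controlling precisely these puncture terms is the entire content of the punctured case, and it is what the positivity and boundary behaviour of $u_a$ in Theorem 3.4(b) are designed for; your plan as written does not address it. (A minor further slip: in your Green--Jensen bookkeeping the counting functions $N_f(r,a_j)$ should enter with a minus sign on the curvature/divisor side before being traded by the First Main Theorem.)
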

When $U$ is hyperbolic, 
there is no non-constant holomorphic map  $f: {\Bbb C} \rightarrow  U$.  However, there are many 
 non-constant maps from the unit-disk into $U$, for example, the universal 
covering map $\phi:  \bigtriangleup(1) \rightarrow  U$. If 
we take the Poincar\'e metric form  $\omega_P$ (i.e., whose Gauss curvature is $-1$), 
then it is easy to compute that $c_{\phi,\omega_P}=1$ since 
 $\phi^*\omega_P$ is the Poincar\'e metric on $\bigtriangleup(1)$.
On the other hand, from 
Theorem \ref{g} above, we  know  that  {\it for  any  non-constant
 holomorphic map $f: \bigtriangleup(1) \rightarrow  U$ we have  $c_{f, \omega_P}\ge  1$.}
So the  universal 
covering map $\phi:  \bigtriangleup(1) \rightarrow  U$ is the (non-constant) map whose growth index 
 achieves the lower bound  $1$. 

Part of the  above theorem can be extended to  higher dimension. 
 Theorem 5.7.2 in \cite{Vojta}, corresponds to the case $c_f=0, R=\infty$ of the following theorem.  
\begin{theorem}\label{thas}
Let $\omega$ be a  positive $(1, 1)$-form on a compact complex manifold $V$ whose 
 holomorphic sectional curvature is bounded from 
above by $-\lambda$ with $\lambda>0$, i.e.  for any holomorphic map $g: U\rightarrow V$
$($$U\subset  {\Bbb C}$ is an open subset$)$, 
 $\mbox{Ric}(g^*\omega) \ge \lambda g^*\omega$.
Let 
$f:  \bigtriangleup(R)\rightarrow  V$ be a 
holomorphic map  with  $c_{f, \omega}<\infty$, where $0<R\leq \infty$.  Then, for every $\epsilon >0$,  the inequality
$$ (\lambda- (1+\epsilon)(c_{f, \omega}+\epsilon))T_{f, \omega}(r)  +  N_{f, \mbox{ram}}(r) 
\leq  O(\log T_{f,\omega}(r)) +\epsilon \log r$$  holds for all $r \in (0, R)$  outside a set $E$ 
with 
$\int_E \exp((c_{f, \omega}+\epsilon)T_{f, \omega}(r))dr <\infty$.
In particular, we have
$$c_{f, \omega}\ge \lambda.$$
\end{theorem}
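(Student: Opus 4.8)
The plan is to reduce to a one-dimensional situation and then run the machinery of Theorem \ref{tha} (in the variant form used for Theorem \ref{g}), exploiting the curvature hypothesis to control the Ricci term. First I would dispose of the trivial case: if $f$ is ramified everywhere or constant on a disc the statement is vacuous, so assume $f$ is nonconstant with $df \not\equiv 0$. Then $g := f$ restricted to (a neighborhood of a point in) $\bigtriangleup(R)$ is a holomorphic map from an open subset of $\CC$ to $V$, so by hypothesis $\mbox{Ric}(f^*\omega) \ge \lambda f^*\omega$ as currents on the set where $f^*\omega$ is a smooth positive form, i.e.\ away from the ramification locus. Writing $f^*\omega = a(z)\,\tfrac{\sqrt{-1}}{2\pi}\,dz\wedge d\bar z$ on $\bigtriangleup(R)$, this says $dd^c \log a \ge \lambda f^*\omega$ wherever $a>0$; and at the ramification points $a$ has zeros which only add a nonnegative contribution to $dd^c\log a$ (counted by the ramification divisor). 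This is exactly the input needed to mimic the curvature step in the proof of Theorem \ref{tha}.

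Next I would set up the Nevanlinna-type identity. Let $u = \log a$ away from the zeros of $a$; the key calorimetric inequality is the Lemma on Logarithmic Derivative / concavity-of-log argument applied to the subharmonic-type function $\log\!\int_{|z|<r} f^*\omega$. Concretely, integrating $dd^c\log a$ over $\bigtriangleup(r)$ and using Jensen's formula, one gets
\begin{equation*}
T_{f,\mbox{Ric}(\omega)}(r) + N_{f,\mbox{ram}}(r) = \frac{1}{2}\int_0^{2\pi}\log a(re^{i\theta})\,\frac{d\theta}{2\pi} + O(1),
\end{equation*}
and then by the concavity of $\log$ and a comparison with $T_{f,\omega}(r)$ together with the standard calculus lemma (the "Borel-type" growth lemma adapted to the weight $\exp(cT)$, which is the device that produces the exceptional set $E$ with $\int_E \exp((c_{f,\omega}+\epsilon)T_{f,\omega}(r))\,dr<\infty$), one bounds the right-hand side by $(1+\epsilon)(c_{f,\omega}+\epsilon)T_{f,\omega}(r) + O(\log T_{f,\omega}(r)) + \epsilon\log r$ outside $E$. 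Combining with $T_{f,\mbox{Ric}(\omega)}(r) \ge \lambda T_{f,\omega}(r)$ (the integrated form of the curvature hypothesis) yields
\begin{equation*}
\lambda T_{f,\omega}(r) + N_{f,\mbox{ram}}(r) \le (1+\epsilon)(c_{f,\omega}+\epsilon)T_{f,\omega}(r) + O(\log T_{f,\omega}(r)) + \epsilon\log r,
\end{equation*}
which rearranges to the claimed inequality. Letting $\epsilon\to 0$ and using that $T_{f,\omega}(r)\to\infty$ while the error terms are $o(T_{f,\omega}(r))$ off a small set forces $c_{f,\omega}\ge\lambda$; here one uses the definition of $c_{f,\omega}$ as the infimum of $c$ with $\int_0^R\exp(cT_{f,\omega})\,dr=\infty$ to see that the exceptional set cannot swallow all large $r$.

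The main obstacle, I expect, is the passage from the pointwise holomorphic sectional curvature bound to the global statement $T_{f,\mbox{Ric}(\omega)}(r)\ge\lambda T_{f,\omega}(r)$ when $f^*\omega$ degenerates along the ramification divisor and when $V$ has dimension $>1$: one must check that $\mbox{Ric}(f^*\omega)$ is well defined as a positive current across the ramification locus and that the inequality $\mbox{Ric}(f^*\omega)\ge \lambda f^*\omega$ survives integration without losing the ramification contribution — this is precisely why $N_{f,\mbox{ram}}(r)$ appears on the favorable side. The second delicate point is the precise accounting in the growth lemma so that the exceptional set is measured in the weighted sense $\int_E\exp((c_{f,\omega}+\epsilon)T_{f,\omega})\,dr<\infty$ rather than in Lebesgue measure; but this is identical to the estimate already carried out in the proof of Theorem \ref{tha}, so I would simply invoke that argument verbatim rather than redo it.
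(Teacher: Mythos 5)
Your proposal is correct and follows essentially the paper's own route: write $f^*\omega = h\,\tfrac{\sqrt{-1}}{2\pi}\,dz\wedge d\bar z$, use Poincar\'e--Lelong and Green--Jensen so that the curvature hypothesis yields $\lambda T_{f,\omega}(r)+N_{f,\mathrm{ram}}(r)$ below the circle mean of $\log h$, and then bound that mean via concavity of $\log$ together with the calculus lemma applied with weight $\gamma(r)=\exp((c_{f,\omega}+\epsilon)T_{f,\omega}(r))$, which produces exactly the stated exceptional set. The only caveat is notational: in dimension $>1$ the term you call $T_{f,\mathrm{Ric}(\omega)}(r)$ should be the integral of $\mathrm{Ric}(f^*\omega)$ (as in the hypothesis), not of the pullback $f^*\mathrm{Ric}(\omega)$, but your appeal to the ``integrated curvature hypothesis'' shows you intend the former, so the argument stands.
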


From Theorem \ref{thas}, if $M$ is  a Hermitian manifold  and 
$\omega_P$ is  a positive   (1,1) form on $M$ whose holomorphic sectional curvature is bounded from 
above by $-1$ on $M$, then  $c_{cri, M}^{\omega_P}\ge 1$.  

We now turn to the Second Main Theorem for holomorphic curves in ${\Bbb P}^n({\Bbb C})$. We prove the following theorem which 
generalizes (by taking $R=\infty$) the result of Nochka. 
\begin{theorem}\label{thb} 
Let $f:   \bigtriangleup(R) \rightarrow {\Bbb  P}^n({\Bbb C})$  be a
holomorphic map with $c_f<\infty$, where $c_f=c_{f, \omega_{FS}}$ and $0<R\leq \infty$.
Assume that the  image of $f$  is contained in some 
$k$-dimensional subspace of $ {\Bbb  P}^n({\Bbb C})$ but not in any subspace of dimension lower than 
$k$. Let  $H_j$, $1 \leq j \leq q$, be  hyperplanes in 
${\Bbb P}^n({\Bbb C})$  in general position.  
Assume that $f(\bigtriangleup(R))\not\subset H_j$ 
for $1 \leq j \leq q$. Then, for any $\epsilon > 0$, the inequality,
\begin{eqnarray*}
\lefteqn{\sum_{j=1}^q ~m_{f, H_j}(r) + \left({n+1\over k+1}\right)N_{f, ram}(r) \leq (2n-k+1)T_f(r)}\\ 
&&+{(2n -k+1)k\over 2}\left((1+\epsilon)(c_f+\epsilon) T_f(r) +\epsilon \log r\right) +  O(\log T_f(r))
\end{eqnarray*}
holds  for all $r \in (0, R)$  outside a set $E$ 
with 
$\int_E \exp((c_f+\epsilon)T_{f}(r))dr <\infty$. Here $N_{f, ram}(r)$ is the counting function for the ramification divisor of $f$.
\end{theorem}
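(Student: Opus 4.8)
The plan is to run the classical Cartan--Nochka derivation of the Second Main Theorem for curves and hyperplanes, with every appeal to the classical logarithmic derivative lemma and the classical calculus (Borel type) lemma replaced by the disc analogues already established for Theorem~\ref{tha}. Those analogues are the only place where the radius $R$ and the growth index enter; they are responsible both for the term $(1+\epsilon)(c_f+\epsilon)T_f(r)+\epsilon\log r$ and for the exceptional set $E$ with $\int_E\exp((c_f+\epsilon)T_f(r))\,dr<\infty$. I may assume $q\ge 2n-k+1$, since proving the inequality for an enlarged family of hyperplanes in general position (with larger left-hand sum) implies it for the original family.

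First I would pass to the linearly nondegenerate case. Let $\Lambda\cong{\Bbb P}^k({\Bbb C})$ be the $k$-plane spanned by $f(\bigtriangleup(R))$. Since $\Lambda\not\subset H_j$, each $\widetilde H_j:=H_j\cap\Lambda$ is a hyperplane in $\Lambda$, the restricted linear form is nonzero, $m_{f,H_j}(r)=m_{f,\widetilde H_j}(r)+O(1)$, and $T_f(r)$ is unchanged up to $O(1)$ (the ambient and intrinsic Fubini--Study forms on $\Lambda$ are comparable). Because the $H_j$ are in general position in ${\Bbb P}^n({\Bbb C})$, any $n+1$ of the $\widetilde H_j$ have empty intersection in $\Lambda$, i.e.\ $\widetilde H_1,\dots,\widetilde H_q$ are in $n$-subgeneral position in $\Lambda$. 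By Nochka's lemma there are Nochka weights $\omega(1),\dots,\omega(q)\in(0,1]$ and a Nochka constant $\theta$ with $\theta^{-1}\le(2n-k+1)/(k+1)$ enjoying the usual combinatorial properties (the prescribed value of $\sum_j\omega(j)$; any subfamily of the $L_j$ of positive total weight behaving, for the contact estimate, like one in general position; any $k+1$ of those $L_j$ linearly independent). These are statements about the hyperplane configuration only, so they transfer verbatim.

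Next I would prove the weighted Second Main Theorem inside $\Lambda$. Fix a reduced representation ${\mathbf f}=(f_0,\dots,f_k)$ and let $W=W(f_0,\dots,f_k)$ be the Wronskian, holomorphic on $\bigtriangleup(R)$. Cartan's key pointwise inequality comparing the sum of the contact functions with the Wronskian, combined with the Nochka weights, gives on each circle $|z|=r$
\begin{equation*}
\sum_{j=1}^q\omega(j)\,m_{f,\widetilde H_j}(r)+N_W(r)\ \le\ (k+1)\,T_f(r)+\frac1{2\pi}\int_{|z|=r}\log^{+}\!\Bigl|\frac{W}{\Psi}\Bigr|\,d\theta+O(1),
\end{equation*}
where $\Psi$ is a suitable product of the $L_j({\mathbf f})$ (equivalently, after a linear change of coordinates, of the $f_i$), and $N_W(r)$ is the counting function of the zero divisor of $W$, which dominates a fixed positive multiple of the ramification counting function $N_{f,\mathrm{ram}}(r)$. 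Writing $W/\Psi$ as a determinant with entries the logarithmic derivatives $f_i^{(\ell)}/f_i$ and observing that the highest derivative order occurring in its $\ell$-th column is $\ell$, the proximity of $W/\Psi$ is bounded by $0+1+\dots+k=\tfrac{k(k+1)}{2}$ applications of the order-one disc logarithmic derivative lemma; hence, outside an exceptional set of the kind in Theorem~\ref{tha},
\begin{equation*}
\frac1{2\pi}\int_{|z|=r}\log^{+}\!\Bigl|\frac{W}{\Psi}\Bigr|\,d\theta\ \le\ \frac{k(k+1)}{2}\bigl((1+\epsilon)(c_f+\epsilon)T_f(r)+\epsilon\log r\bigr)+O(\log T_f(r)).
\end{equation*}
Feeding this back, then using $0<\omega(j)\le 1$, the First Main Theorem $m_{f,\widetilde H_j}(r)+N_{f,\widetilde H_j}(r)=T_f(r)+O(1)$, the value of $\sum_j\omega(j)$, the comparison of $N_W(r)$ with $N_{f,\mathrm{ram}}(r)$, and scaling by $\theta^{-1}\le(2n-k+1)/(k+1)$, one arrives after simplification at
\begin{equation*}
\sum_{j=1}^q m_{f,H_j}(r)+\Bigl(\tfrac{n+1}{k+1}\Bigr)N_{f,\mathrm{ram}}(r)\ \le\ (2n-k+1)T_f(r)+\tfrac{(2n-k+1)k}{2}\bigl((1+\epsilon)(c_f+\epsilon)T_f(r)+\epsilon\log r\bigr)+O(\log T_f(r)),
\end{equation*}
which is the assertion: the numerology $2n-k+1$ comes from Nochka's weights, $\tfrac{k(k+1)}{2}$ from the differentiation orders inside $W$, and $\tfrac{n+1}{k+1}$ from the comparison of $N_W$ with $N_{f,\mathrm{ram}}$. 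Since only finitely many exceptional sets are produced, their union still satisfies $\int_E\exp((c_f+\epsilon)T_f(r))\,dr<\infty$.

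The combinatorial Nochka machinery is insensitive to whether the source is ${\Bbb C}$ or $\bigtriangleup(R)$, and the Cartan contact identities are local; the hard part is the analytic core. The main obstacle is to set up and, above all, to iterate the disc logarithmic derivative lemma so that each differentiation in the Wronskian contributes exactly one controlled term $(1+\epsilon)(c_f+\epsilon)T_f(r)+\epsilon\log r$ up to $O(\log T_f(r))$, and to check that the finite union of resulting exceptional sets still obeys the integral bound. Because the order-one disc logarithmic derivative lemma and the companion calculus lemma --- bounding $\tfrac{d}{dr}$ of the smoothed characteristic by $\exp((1+\epsilon)(c_f+\epsilon)T_f(r))$ off a set $E$ of that type --- are precisely the tools built for Theorem~\ref{tha}, what remains is to carry the error terms correctly through the Nochka weighting and through the summation over the rows and columns of $W$.
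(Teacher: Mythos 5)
Your global architecture is sound and, in its outer layer, coincides with the paper's: reduce to the linearly nondegenerate case inside the spanned $k$-plane, invoke Nochka's lemma (Lemma \ref{Nochka}) for the restricted hyperplanes in $n$-subgeneral position, prove a weighted Second Main Theorem in ${\Bbb P}^k$, and then run the standard splitting $\sum_j m_f=\sum_j(1-\theta\omega(j))m_f+\theta\sum_j\omega(j)m_f$ with $\theta\le(2n-k+1)/(k+1)$ and $\theta\ge(n+1)/(k+1)$ (your ``$\theta^{-1}\le(2n-k+1)/(k+1)$'' and the attribution of the factor $(n+1)/(k+1)$ to the comparison $N_W\ge N_{f,\mathrm{ram}}$ are slips, but harmless). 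The genuine gap is in the analytic core, where you replace the paper's argument by Cartan's Wronskian method plus the disc logarithmic derivative lemma and assert that $m\bigl(r,W/\Psi\bigr)$ costs exactly $0+1+\cdots+k=\frac{k(k+1)}{2}$ units of the error $E(r)=(1+\epsilon)(c_f+\epsilon)T_f(r)+\epsilon\log r$. In this theorem the error is proportional to $T_f(r)$, so that constant \emph{is} the content of the statement, and your accounting does not produce it: $W/\Psi$ is a $(k+1)\times(k+1)$ determinant whose order-$\ell$ row/column contains $k+1$ distinct logarithmic derivatives (and, after the Nochka reduction, the relevant linear forms vary with $z$ through a maximum over admissible $(k+1)$-subsets inside the circle integral). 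The bound actually available from Theorems \ref{l} and \ref{l2} is $m(r,W/\Psi)\le\sum_{i}\sum_{\ell=1}^{k}m\bigl(r,f_i^{(\ell)}/f_i\bigr)+O(1)$, which costs about $(k+1)\frac{k(k+1)}{2}$ units (or a factor $q$ instead of $k+1$ if you sum over all forms to handle the varying subsets), so your final coefficient is $\frac{(2n-k+1)k(k+1)}{2}$ rather than $\frac{(2n-k+1)k}{2}$. The sharper count ``one entry per derivative order'' uses that each permutation term in the determinant has total order $\frac{k(k+1)}{2}$, but that maximum over permutations/subsets sits \emph{inside} the $d\theta$-integral, and interchanging it with the integral at only $O(\log T_f)$ cost is precisely what the stated LDL does not give; you would need an extra estimate (e.g.\ a single concavity step applied to $\log\int$ of a calculus-lemma-controlled quantity per derivative order, or Gol'dberg--Grinshtein type $L^\alpha$ bounds for logarithmic derivatives, redone with the weight $\gamma(r)=\exp((c_f+\epsilon)T_f(r))$). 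In the classical case $R=\infty$, $c_f=0$ this overcount is invisible because all such terms are $O(\log T_f+\log r)$, which is why the textbook Cartan--Nochka proof never worries about it; here it changes the theorem.

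This is exactly the difficulty the paper's proof is built to avoid: its nondegenerate core (Theorem \ref{gen}) never estimates the Wronskian by logarithmic derivatives. It uses the Ahlfors method for the associated curves: the product-to-sum Lemma \ref{product3} reduces the contact terms at level $k$ to a sum over hyperplanes with exponent $n-k$, the Ahlfors estimate (Theorem \ref{alf}) together with Lemma \ref{h3} bounds the resulting double integral polynomially in $T_f(r)$, and then a \emph{single} application of the calculus inequality (\ref{cs}) per level, weighted by $n-k$, yields exactly $\sum_{k=0}^{n-1}(n-k)=\frac{n(n+1)}{2}$ units (that is, $\frac{k(k+1)}{2}$ when run in ${\Bbb P}^k$), the sum over all $q$ hyperplanes and the maximum over $\mu$ being absorbed inside one logarithm at cost $O(\log T_f(r))$. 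So either adopt that route, or supply the refined logarithmic-derivative estimate your bookkeeping silently assumes; as written, your argument proves a weaker inequality than Theorem \ref{thb}.
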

When $k=n$, this gives an  extension of H. Cartan's result.
\begin{corollary}
 Let $H_1,\dots,H_q$ be hyperplanes in 
${\Bbb P}^n({\Bbb C})$ in general position. Let 
$f:   \bigtriangleup(R) \rightarrow {\Bbb  P}^n({\Bbb C})$ be a linearly non-degenerate
 holomorphic curve (i.e. its image is not
 contained in any proper subspace of $ {\Bbb  P}^n({\Bbb C})$) with $c_f<\infty$, where $c_f=c_{f, \omega_{FS}}$ and $0<R\leq \infty$.
   Then, for any $\epsilon > 0$, the  inequality
\begin{eqnarray*}
&~&\sum_{j=1}^q m_f(r, H_j) + N_{W}(r, 0)\leq (n+1)T_f(r) + {n(n+1)\over 2} (1+\epsilon)(c_f+\epsilon) T_f(r)\\
&~& + O(\log T_f(r)) +{n(n+1)\over 2}\epsilon \log r
\end{eqnarray*}
holds  for all $r \in (0, R)$  outside a set $E$ 
with 
$\int_E \exp((c_f+\epsilon)T_{f}(r))dr <\infty$.
Here  $W$ denotes the Wronskian of $f$. 
\end{corollary}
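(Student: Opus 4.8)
The plan is to read off this corollary from Theorem~\ref{thb} by specializing to the linearly non-degenerate situation. Since $f\colon\bigtriangleup(R)\to\mathbb{P}^n(\mathbb{C})$ is assumed linearly non-degenerate, its image is contained in the $n$-dimensional subspace $\mathbb{P}^n(\mathbb{C})$ and in no subspace of smaller dimension, so Theorem~\ref{thb} applies with $k=n$; the remaining hypotheses there (the hyperplanes $H_1,\dots,H_q$ in general position, $f(\bigtriangleup(R))\not\subset H_j$, and $c_f<\infty$) are exactly the hypotheses of the corollary.

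First I would substitute $k=n$ into the constants appearing in Theorem~\ref{thb}: this yields $2n-k+1=n+1$, the coefficient $\frac{n+1}{k+1}=1$ in front of the ramification term, and $\frac{(2n-k+1)k}{2}=\frac{n(n+1)}{2}$. With these values the conclusion of Theorem~\ref{thb} becomes
\begin{align*}
\sum_{j=1}^q m_f(r,H_j)+N_{f,\mathrm{ram}}(r)
&\le (n+1)\,T_f(r)\\
&\quad+\frac{n(n+1)}{2}\Bigl((1+\epsilon)(c_f+\epsilon)\,T_f(r)+\epsilon\log r\Bigr)+O(\log T_f(r))
\end{align*}
for all $r\in(0,R)$ outside a set $E$ with $\int_E\exp\bigl((c_f+\epsilon)T_f(r)\bigr)\,dr<\infty$, which is already the exceptional set named in the statement. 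Expanding the bracket and keeping the $\frac{n(n+1)}{2}\epsilon\log r$ term separate puts the right-hand side in exactly the asserted form.

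It remains to identify the two ramification terms. Picking a reduced representation $f=[f_0\colon\cdots\colon f_n]$, linear non-degeneracy guarantees that the Wronskian $W=W(f_0,\dots,f_n)$ is not identically zero, and by the definition of the ramification divisor used in Theorem~\ref{thb} (see Section~2), for a holomorphic curve whose image spans all of $\mathbb{P}^n(\mathbb{C})$ this divisor is precisely the zero divisor of $W$; hence $N_{f,\mathrm{ram}}(r)=N_W(r,0)$. Substituting this into the displayed inequality yields the corollary. I do not expect a genuine obstacle here, since the entire analytic content already sits inside Theorem~\ref{thb}; the one point meriting a check is precisely this last identification, namely that for $k=n$ the normalized ramification count of Theorem~\ref{thb} reduces to the Wronskian zero-counting function $N_W(r,0)$. (Alternatively one could bypass Theorem~\ref{thb} and rerun Cartan's original argument — the combinatorial lemma relating $\max$ over admissible index sets $K$ of $\sum_{j\in K}$ to the Wronskian, together with the lemma on the logarithmic derivative — but calibrated by the growth index $c_f$; the delicate ingredient would then be the version of the logarithmic-derivative lemma on $\bigtriangleup(R)$ with error controlled by $(1+\epsilon)(c_f+\epsilon)T_f(r)+\epsilon\log r$, which is exactly what the proof of Theorem~\ref{thb} already supplies.)
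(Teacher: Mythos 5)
The specialization $k=n$ in Theorem~\ref{thb} and the bookkeeping of constants are fine, but the final identification on which your argument rests, $N_{f,\mathrm{ram}}(r)=N_W(r,0)$, is false, and this is a genuine gap. For a linearly non-degenerate curve with local normal form ${\bf f}(z)=(1+\cdots,\ z^{\nu_1}+\cdots,\ \dots,\ z^{\nu_n}+\cdots)$, $1\le\nu_1<\cdots<\nu_n$, the ramification divisor of $f$ (zeros of $df$, which is what the paper means by this term, cf.\ Theorem~\ref{tha} and the second corollary, which allows degenerate curves) has multiplicity $\nu_1-1$ at the point, while the Wronskian vanishes there to order $m_n=\sum_{i=1}^n(\nu_i-i)\ge n(\nu_1-1)$. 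These differ in general: for $f=[1:z:z^3]$ in ${\Bbb P}^2$ the map is unramified at $0$ but $W$ vanishes to order $1$. Hence $N_W(r,0)\ge N_{f,\mathrm{ram}}(r)$, typically strictly, so the corollary (with $N_W(r,0)$ on the left) is \emph{stronger} than the $k=n$ case of Theorem~\ref{thb} as stated, and your deduction runs in the wrong direction: from the theorem you only obtain the weaker inequality with $N_{f,\mathrm{ram}}(r)$. (Also, Section~2 contains no definition of the ramification divisor; it enters in Section~3 via the Poincar\'e--Lelong formula.)

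The correct route — and the one the paper actually takes — is not to quote the final statement of Theorem~\ref{thb} but the intermediate results of its proof: Theorem~\ref{gen} (the general form of the SMT), whose left-hand side already carries $N_W(r,0)=N_{d_n}(r)$, combined with the lemma at the beginning of part~E (Lemma A3.1.6 in \cite{Ru}), which bounds $\sum_{j=1}^q m_f(r,H_j)$ by the integral of the maximum over linearly independent subsets, up to $O(1)$, when the $H_j$ are in general position. That combination is verbatim the paper's proof of the $k=n$ case of Theorem~\ref{thb} and yields the corollary exactly as stated. Your parenthetical fallback (``rerun Cartan's argument with the growth-index-calibrated error term'') is essentially this; it should be promoted from an aside to the main argument, replacing the false identification of the two counting functions.
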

As a consequence of Theorem \ref{thb}, we get
\begin{corollary}  Let $H_1,\dots,H_q$ be hyperplanes in 
${\Bbb P}^n({\Bbb C})$ in general position. Let 
$f:   \bigtriangleup(R) \rightarrow {\Bbb  P}^n({\Bbb C})$ be a 
non-constant  holomorphic curve
  with $c_f<\infty$, where $c_f=c_{f, \omega_{FS}}$ and $0<R\leq \infty$.
  Assume that $f( \bigtriangleup(R))\not\subset H_j$ 
for $1 \leq j \leq q$. Then, for any $\epsilon > 0$, the inequality
\begin{eqnarray*}
&~&\sum_{j=1}^q m_f(r, H_j) +  N_{f, ram}(r) \leq 2n T_f(r)\\
&~&+ {(2n+1)^3\over 8} \left((1+\epsilon)(c_f+\epsilon) T_f(r) +\epsilon \log r\right) +  O(\log T_f(r))
\end{eqnarray*}
holds  for all $r \in (0, R)$  outside a set $E$ 
with 
$\int_E \exp((c_f+\epsilon)T_{f}(r))dr <\infty$.
\end{corollary}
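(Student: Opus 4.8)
\medskip
\noindent\textbf{Proof proposal.} The plan is to obtain this corollary as an essentially immediate consequence of Theorem~\ref{thb}, the only work being the choice of the smallest linear subspace containing the image of $f$ and the subsequent coarsening of the constants so that they no longer involve $k$.

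First I would reduce to Theorem~\ref{thb}. Since $f$ is non-constant, its image is not a single point, so there is a smallest integer $k$ with $1\le k\le n$ for which $f(\bigtriangleup(R))$ is contained in some $k$-dimensional linear subspace $L\cong\mathbb{P}^k(\mathbb{C})$ of $\mathbb{P}^n(\mathbb{C})$ and in no subspace of smaller dimension. By hypothesis $f(\bigtriangleup(R))\not\subset H_j$ for $1\le j\le q$ (in particular no $H_j$ contains $L$), and $c_f<\infty$; thus all the hypotheses of Theorem~\ref{thb} are met. Applying it, for every $\epsilon>0$ one gets, for all $r\in(0,R)$ outside a set $E$ with $\int_E\exp((c_f+\epsilon)T_f(r))\,dr<\infty$, the inequality
\[ \sum_{j=1}^q m_f(r,H_j)+\Big(\tfrac{n+1}{k+1}\Big)N_{f,ram}(r)\le (2n-k+1)T_f(r)+\tfrac{(2n-k+1)k}{2}\big((1+\epsilon)(c_f+\epsilon)T_f(r)+\epsilon\log r\big)+O(\log T_f(r)). \]

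Next I would remove the dependence on $k$ using only $1\le k\le n$ together with $N_{f,ram}(r)\ge 0$: since $\frac{n+1}{k+1}\ge 1$ we may drop that factor, bounding the left side below by $\sum_j m_f(r,H_j)+N_{f,ram}(r)$; since $k\ge 1$ we have $2n-k+1\le 2n$; and a routine estimate ($2n-k+1\le 2n$ and $k\le n$ give $\frac{(2n-k+1)k}{2}\le n^2\le\frac{(2n+1)^2}{4}\le\frac{(2n+1)^3}{8}$) bounds the coefficient of the cross term by $\frac{(2n+1)^3}{8}$. Substituting these into the displayed inequality, and noting that $T_f(r)\to\infty$ so that the $O(\log T_f(r))$ term still absorbs any bounded correction, yields exactly the asserted inequality, with the same exceptional set $E$.

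The content here lies entirely in Theorem~\ref{thb}; there is no genuine obstacle in this corollary beyond the bookkeeping above. The one point that merits care is that, although the traces $H_j\cap L$ of the hyperplanes need not be in general position inside $L\cong\mathbb{P}^k(\mathbb{C})$, this causes no difficulty: the passage from ``general position in $\mathbb{P}^n$ with image in a $k$-plane'' to the Nochka-type estimate is precisely what Theorem~\ref{thb} already encodes, so nothing has to be re-proved. Likewise the exceptional set and the shape of the error term $O(\log T_f(r))+\epsilon\log r$ are inherited verbatim from Theorem~\ref{thb}.
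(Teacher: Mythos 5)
Your proposal is correct and is essentially the paper's intended derivation: the paper states this corollary as an immediate consequence of Theorem~\ref{thb}, obtained exactly as you do by taking $k$ to be the minimal dimension of a linear subspace containing the image ($1\le k\le n$ since $f$ is non-constant) and then coarsening the constants via $\frac{n+1}{k+1}\ge 1$, $2n-k+1\le 2n$, and $\frac{(2n-k+1)k}{2}\le\frac{(2n+1)^3}{8}$, with the exceptional set and error terms inherited verbatim.
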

It turns out that our treatment of the error term in Nevanlinna's theory permits to extend many of the classical results, using the known strategy. Since the new results seem of interest, for the reader's 
convenience, we repeat the literature in places.
We give in particular a version of Bloch's theorem for maps with values in a complex torus which belong to the space $  {\mathcal E}_0 (\bigtriangleup(1))$.
We also 
  prove a defect relation for the intersection of the image  of a map in   $  {\mathcal E}_0 (\bigtriangleup(1))$ with an ample divisor in an abelian variety extending  results by Siu-Yeung \cite {SY1}.
  \section{Some examples and applications}
In this section, we provide some  examples of holomorphic maps on the unit disc which are in the class we study.

\noindent {\bf Example 1}. Let $N$ be a compact Riemann surface of genus $\ge 2$.
 Then $N$ has a smooth metric form 
$\omega_P$ whose Gauss curvature is $-1$. 
 We take $\phi: \bigtriangleup(1)  \rightarrow N$ 
as the uniformizing map. 
Then 
$$T_{\phi, \omega_P}(r)=\log{1\over 1-r} +O(1).$$
Hence $c_{\phi,  \omega_P}=1$, and thus $\phi\in \mathcal E_1$.
Note that not only we know that $\phi$ is onto but also we   get, from Theorem \ref{g}, that $\delta_{\phi,\omega_P}(a)=0$ for every  $a\in N$.

\noindent {\bf Example 2}. Let $M$ be a compact Kobayashi hyperbolic manifold and let 
$\omega$ be a metric form. Then, by Brody's theorem (see \cite {Lang} or \cite{Ru}), there is a constant $C>0$ such that for any holomorphic map
$f: \bigtriangleup(1) \rightarrow M$,  we have $|f'(0)|_{\omega}\leq C$. Hence 
$|f'(z)|_{\omega}\leq {C\over 1-|z|}$ on $\bigtriangleup(1)$.
Consequently, we have
$T_{f, \omega}(r)\leq C\log{1\over 1-r}$. So the space  $ {\mathcal E}_0$ is empty.
However,  $c_{f, \omega}$ is  not necessarily finite since it requires an estimate on the  lower bound on $T_{f, \omega}(r).$  The 
following two examples give the lower bound on $T_{f, \omega}(r)$ in terms of  $\log{1\over 1-r}$.

\noindent
{\bf Example 3}. Let $(X, \mathcal{L})$ be a compact, 1-dimensional lamination in a compact
Hermitian manifold $(M, \omega)$ (see \cite {DNS}, \cite{FS} and the references therein). Assume that  $(X, \mathcal{L})$ is Brody hyperbolic, which 
means that there is no non-constant image of ${\Bbb C}$ directed by the  lamination  $\mathcal{L}$. 
So for every leave $L$, we have the universal covering map $f:  \bigtriangleup(1) \rightarrow L$.
 It is known (see \cite{FS})  that there are two positive constants  
$C, C'$ (which do not depend on the leave) such that 
$${C\over 1-|\zeta|} \leq |f'(\zeta)|_{\omega} \leq {C'\over 1-|\zeta|}.$$ Therefore 
$$T_{f, \omega}(r)\sim \log{1\over 1-r},$$
so $f\in \mathcal E$.

\noindent
{\bf Example 4}. Let $(M, \omega)$ be a compact Hermitian manifold  and $\mathcal{F}$ be a
 Brody hyperbolic foliation with a finite number of singularities which are linearizable.
According to a result of Dinh-Nguyen-Sibony (See \cite{DNS}), for any extremal positive 
$\partial \bar{\partial}$-closed current $T$ directed by the foliation which gives full mass to hyperbolic leaves, there are two positive constants  
$C, C'$ (which do not depend on the leaves) such that 
$$ C\log{1\over 1-r} \leq T_{\phi, \omega}(r) \leq C'\log{1\over 1-r}$$
for  $T$-almost every leave $L$ (in terms of the measure $T\wedge \omega$). So  $\phi\in \mathcal E$.
 Here
$\phi:  \bigtriangleup \rightarrow L\subset  M$ is the universal covering map of $L$. 

In the case where $\mathcal{F}$  is a foliation in ${\Bbb P}^2({\Bbb C})$, our Theorem \ref{thb} 
implies that, for any line $\Lambda \subset {\Bbb P}^2$, except for countably  many lines,  
there are 
 cluster points of the sequence of the measures
$${1\over T_{\phi}(r)}\sum_{\phi(a)\in \Lambda, |a|<r} \delta_{a} \log^+{r\over |a|}$$
which are  probability measures on the unit circle, where $\delta_a$ is the Dirac measure at $a$. 

   We end this section with the following theorem which characterizes the Kobayashi hyperbolicity of $M$.

\begin{theorem}\label{Ko} Let $M$ be a compact complex manifold. Then the following 
are equivalent.

(a) $M$ is Kobayashi hyperbolic;

(b) For any given positive $(1, 1)$-form $\omega$ on $M$, there are positive constants $c_0$ and $A$ such that for every  holomorphic map
$f: \bigtriangleup(1) \rightarrow M$, $\int_0^1 \exp(c T_{f, \omega}(r))dr \leq A$ for every $c<c_0$;

(c)  The class $\mathcal E_0(\bigtriangleup(1))$ is empty.
\end{theorem}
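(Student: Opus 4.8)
\textbf{Proof proposal for Theorem \ref{Ko}.}

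The plan is to prove the cycle of implications $(a)\Rightarrow(b)\Rightarrow(c)\Rightarrow(a)$. The implication $(a)\Rightarrow(b)$ is essentially the computation already carried out in Example 2: if $M$ is compact and Kobayashi hyperbolic, Brody's theorem gives a uniform constant $C$ with $|f'(0)|_\omega\le C$ for every holomorphic $f:\bigtriangleup(1)\to M$, and after applying this to the reparametrized maps $z\mapsto f(\frac{z+a}{1+\bar a z})$ one gets $|f'(\zeta)|_\omega\le \frac{C}{1-|\zeta|^2}$ on the disc. Integrating, $f^*\omega\le \frac{C^2}{(1-|\zeta|^2)^2}\frac{\sqrt{-1}}{2\pi}d\zeta\wedge d\bar\zeta$ (up to the comparison of the pulled-back Hermitian form with $|f'|^2$ times the Euclidean form, using compactness of $M$ to bound the metric coefficients), hence $T_{f,\omega}(r)\le C'\log\frac{1}{1-r}+O(1)$ with $C'$ independent of $f$. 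Then for $c<c_0:=1/C'$ we have $\exp(cT_{f,\omega}(r))\le C''(1-r)^{-cC'}$ with $cC'<1$, so $\int_0^1 \exp(cT_{f,\omega}(r))\,dr\le A$ for a uniform $A$; this is exactly $(b)$.

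The implication $(b)\Rightarrow(c)$ is immediate from the definitions: if $f\in\mathcal E_0(\bigtriangleup(1))$ is non-constant then $c_{f,\omega}=0$, so $\int_0^1\exp(cT_{f,\omega}(r))\,dr=\infty$ for every $c>0$, in particular for some $c<c_0$, contradicting the uniform bound in $(b)$; hence $\mathcal E_0(\bigtriangleup(1))$ is empty. (Here one should note that the statement of $(c)$ is independent of $\omega$ when $M$ is compact, as remarked in the introduction, so it suffices that $(b)$ holds for one choice of $\omega$.)

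The real content is $(c)\Rightarrow(a)$, proved by contraposition: assume $M$ is not Kobayashi hyperbolic and produce a non-constant element of $\mathcal E_0(\bigtriangleup(1))$. Since $M$ is compact, failure of Kobayashi hyperbolicity gives, by Brody's reparametrization lemma, a non-constant entire curve $g:\mathbb C\to M$ with bounded derivative, $|g'|_\omega\le 1$. I would like to feed $g$ into the theory via a non-constant holomorphic map $\bigtriangleup(1)\to M$ of sufficiently fast growth; the natural candidate is $h=g\circ\psi$ where $\psi:\bigtriangleup(1)\to\mathbb C$ is chosen so that $T_{h,\omega}(r)=O(T_{g,\omega}(\psi\text{-radius}))$ grows like a prescribed fast-growing function of $\log\frac{1}{1-r}$. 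Concretely, since $g$ is a Brody curve its characteristic satisfies $T_{g,\omega}(t)=O(t^2)$, and one can exploit the freedom in the entire curve — or compose with an auxiliary entire function of finite but large order — to arrange $T_{g,\omega}(t)\ge t$ for large $t$ (for instance $g$ restricted to a line through a non-ramification point is a non-constant entire map to $M$, and after composing with a suitable entire function $e^t$-type growth is available); then taking $\psi(z)=\log\frac{1}{1-z}$ (a holomorphic map $\bigtriangleup(1)\to\mathbb C$ whose image exhausts $\mathbb C$) forces $T_{h,\omega}(r)$ to dominate any power of $\log\frac{1}{1-r}$, whence $c_{h,\omega}=0$ and $h\in\mathcal E_0(\bigtriangleup(1))$. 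The main obstacle is precisely this last construction: one must verify that $h=g\circ\psi$ is non-constant and that the growth of $T_{h,\omega}$ can genuinely be made faster than every $C\log\frac{1}{1-r}$ — equivalently, that a Brody curve can be precomposed to acquire arbitrarily fast growth while the target stays compact — and then that pulling back along $\psi$ transfers this to the disc; the change-of-variables estimate $T_{g\circ\psi,\omega}(r)\asymp T_{g,\omega}(\psi(S_r))$ and the choice of $\psi$ are where the care is needed, the rest being bookkeeping with the definitions in the introduction.
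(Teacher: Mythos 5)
The cycle $(a)\Rightarrow(b)\Rightarrow(c)\Rightarrow(a)$ is the right frame, and your $(a)\Rightarrow(b)$ and $(b)\Rightarrow(c)$ match the paper (one small slip: from $T_{f,\omega}(r)\le C'\log\frac{1}{1-r}+O(1)$ you must take $c_0$ strictly below $1/C'$, e.g. $c_0=1/(2C')$ as the paper does, since with $c_0=1/C'$ the integrals $\int_0^1(1-r)^{-cC'}dr$ blow up as $c\uparrow c_0$ and no single $A$ works). The genuine gap is in $(c)\Rightarrow(a)$, which is the heart of the theorem. Your candidate $h=g\circ\psi$ with $g$ a Brody curve and $\psi(z)=\log\frac{1}{1-z}$ does not work: since $|g'|_\omega\le 1$ one has $(g\circ\psi)^*\omega\le |\psi'|^2\,\frac{\sqrt{-1}}{2\pi}dz\wedge d\bar z$ with $|\psi'(z)|^2=|1-z|^{-2}$, and $\int_{|z|<t}|1-z|^{-2}$ grows only like $\log\frac{1}{1-t}$, so $T_{g\circ\psi,\omega}(r)=O(1)$: the composition has bounded characteristic, hence $c_{g\circ\psi,\omega}=\infty$, the exact opposite of membership in $\mathcal E_0$. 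Moreover the image of $\psi$ is contained in a half-strip of bounded width, not all of $\mathbb C$, and the asserted comparison $T_{g\circ\psi,\omega}(r)\asymp T_{g,\omega}(\psi(S_r))$ has no general justification: an upper bound of that shape is easy, but the lower bound — which is what you need — can fail because $\psi$ may live where $g^*\omega$ is small. Precomposing the entire curve with entire functions of large order does not by itself repair this, because what is required is a single map $\bigtriangleup(1)\to M$ whose characteristic beats \emph{every} multiple of $\log\frac{1}{1-r}$ (membership in $\mathcal E_0$, not merely some $\mathcal E_c$), and you never establish a lower bound for the composite characteristic; you explicitly flag this step as the "main obstacle," but it is precisely the content of the implication.

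For comparison, the paper does not try to generate the fast growth from the entire-curve side at all. It takes from classical function theory on the disc (Tsuji) a holomorphic $g_1:\bigtriangleup(1)\to\mathbb P^1(\mathbb C)$ with $N_{g_1}(r,a)/\log\frac{1}{1-r}\to\infty$ for most $a$, composes with the universal covering of $\bigtriangleup(1)\setminus g_1^{-1}(\infty)$ to obtain such a map $g$ into $\mathbb C$, and only then composes with the nonconstant entire curve $f:\mathbb C\to M$ furnished by Brody's theorem (compactness of $M$ plus failure of hyperbolicity). The growth of counting functions is converted into growth of the characteristic by the change-of-variables identity $T_{f\circ g,\omega}(r)=\int_{\mathbb C}N_g(r,w)\,(f^*\omega)(w)$ — this is the paper's step "apply the identity to $\omega$ considered as a measure on the image" — after which Fatou-type reasoning gives $T_{f\circ g,\omega}(r)/\log\frac{1}{1-r}\to\infty$ and hence $f\circ g\in\mathcal E_0(\bigtriangleup(1))$. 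If you want to salvage your route, you would need either to import such a fast disc map as the paper does, or to prove a genuine lower bound for the characteristic of a composition, which your sketch does not provide.
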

\begin{proof}  We first prove $(a) \Rightarrow (b)$.  Indeed, since  $M$ is  Kobayashi hyperbolic,
 there is a constant $C>0$ such that for any holomorphic map
$f: \bigtriangleup(1) \rightarrow M$,  we have $|f'(0)|_{\omega}\leq C$. Hence 
$|f'(z)|_{\omega}\leq {C\over 1-|z|}.$ Consequently we have
$T_{f, \omega}(r)\leq C\log{1\over 1-r}$. We take $c_0={1\over 2C}$, then it is easy to see that 
$$\int_0^1 \exp(c T_{f, \omega}(r))dr \leq \int_0^1 {1\over (1-r)^{1/2}}dr  = A$$ for every $c<c_0$. 

The fact that  (b) implies (c) is obvious. So we only need to prove that (c) implies (a).
 It suffices to prove that if $M$ is not Kobayashi hyperbolic then $\mathcal E_0(\bigtriangleup(1))$ is not empty. We first construct a holomorphic map $g: \bigtriangleup(1) \rightarrow {\Bbb C}$, such that for most $a's$, $$\lim_{r \rightarrow 1}{ N_g(r,a)\over \log {1\over 1-r}}=\infty.$$
 Indeed such a holomorphic map  $g_1:  \bigtriangleup(1) \rightarrow \Bbb P^1( {\Bbb C})$ exists (see \cite{Tsuji}). Let $ E$ denote the preimage of the point at infinity in $\Bbb P^1( {\Bbb C})$. We can assume that the point $0$ is not in
 $E$. Let   $ h: \bigtriangleup(1) \rightarrow\bigtriangleup(1)\setminus E$ denote the universal covering map from with $h(0)=0$.  Then the map $g=g_1(h)$ satisfies our condition. 
 
 Since $M$ is not Kobayashi hyperbolic there is a non-constant holomorphic map $f: {\Bbb C} \rightarrow M$.
 The map $F=f(g_1(h))$ satisfies that for most $a's$ 
 \begin{equation}\label{s}\lim_{r \rightarrow 1}{ N_F(r,a)\over \log {1\over 1-r}}=\infty.
 \end{equation} Then a similar growth is valid for $T_F(r)$.  Indeed we have:
 $$ N_F(r,a)=  \int  \log^+{r\over |z|} F^*(\delta_{a}).$$
 Similarly for any positive measure $\mu$ we have
 $$\int N_F(r,a) d\mu(a)=  \int  \log^+{r\over |z|} F^*(\mu).$$
 It suffices to apply this to the form $\omega$ considered as a measure on $F(\bigtriangleup(1))$.
 It follows that if $N(r,a)$ grows fast for most $a's$, the same is true for $T(F,r)$.
 Hence $F\in  \mathcal E_0(\bigtriangleup(1))$ and thus  $\mathcal E_0(\bigtriangleup(1))$ is not empty.
   \end{proof}

\section{Holomorphic mappings  into compact Riemann surfaces}

\begin{lemma}
[Calculus Lemma] Let $0<R\leq \infty$ and  let $\gamma(r)$ be a non-negative function defined 
on $(0, R)$ with 
$\int_0^R \gamma(r) dr=\infty$. Let $h$ be a 
nondecreasing function of class $C^1$ defined on $(0, R)$. 
Assume that $\lim_{r\rightarrow R} h(r)=\infty$ and $h(r_0)\ge c>0$. 
Then, for every $0< \delta<1$,   the inequality
$$ h'(r) \leq h^{1+\delta}(r)   \gamma(r) $$
holds for all $r \in (0, R)$  outside a set $E$ 
with 
$\int_E \gamma(r) dr <\infty$.
\end{lemma}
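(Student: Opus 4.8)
The plan is to prove the contrapositive-style estimate by a classical "exceptional set" argument going back to the Borel–Nevanlinna growth lemma. Define the exceptional set as
\[
E = \{\, r \in (0,R) \;:\; h'(r) > h^{1+\delta}(r)\,\gamma(r)\,\}.
\]
On $E$ we have the pointwise inequality $\gamma(r) < h'(r)/h^{1+\delta}(r)$, and the key observation is that the right-hand side is, up to a constant, the derivative of $-h^{-\delta}(r)$: indeed $\frac{d}{dr}\bigl(h^{-\delta}(r)\bigr) = -\delta\, h^{-\delta-1}(r)\,h'(r)$, so $h'(r)/h^{1+\delta}(r) = -\frac{1}{\delta}\frac{d}{dr}\bigl(h^{-\delta}(r)\bigr)$. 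Therefore
\[
\int_E \gamma(r)\,dr \;\le\; \int_E \frac{h'(r)}{h^{1+\delta}(r)}\,dr \;=\; -\frac{1}{\delta}\int_E \frac{d}{dr}\bigl(h^{-\delta}(r)\bigr)\,dr.
\]

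Now I would estimate this last integral. Since $h$ is $C^1$ and nondecreasing, $h^{-\delta}$ is a nonincreasing $C^1$ function, so $-\frac{d}{dr}(h^{-\delta}(r)) \ge 0$, and extending the integral from $E$ to all of $(r_0, R)$ (and discarding $(0,r_0)$, which only adds a finite amount since $\gamma$ is locally integrable — or one simply works on $(r_0,R)$ from the start, noting $h(r_0)\ge c>0$) only increases it. Hence
\[
\int_{E\cap(r_0,R)} \gamma(r)\,dr \;\le\; -\frac{1}{\delta}\int_{r_0}^{R}\frac{d}{dr}\bigl(h^{-\delta}(r)\bigr)\,dr
\;=\; \frac{1}{\delta}\Bigl( h^{-\delta}(r_0) - \lim_{r\to R} h^{-\delta}(r)\Bigr).
\]
Using the hypotheses $\lim_{r\to R} h(r) = \infty$ (so $\lim_{r\to R} h^{-\delta}(r) = 0$) and $h(r_0)\ge c > 0$ (so $h^{-\delta}(r_0) \le c^{-\delta}$), we conclude $\int_{E\cap(r_0,R)}\gamma(r)\,dr \le \frac{1}{\delta}\,c^{-\delta} < \infty$. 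Adding $\int_0^{r_0}\gamma(r)\,dr < \infty$ gives $\int_E \gamma(r)\,dr < \infty$, which is the claim; and by construction the asserted inequality $h'(r)\le h^{1+\delta}(r)\gamma(r)$ holds for every $r\in(0,R)\setminus E$.

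The only genuinely delicate point is the justification of the fundamental theorem of calculus step $\int_E \frac{d}{dr}(h^{-\delta})\,dr \ge \int_{r_0}^R \frac{d}{dr}(h^{-\delta})\,dr$ together with the evaluation of the latter: since $h^{-\delta}$ is monotone and $C^1$, the integrand has a constant sign, so the bound "integral over $E$ $\le$ integral over $(r_0,R)$" is immediate, and the improper integral $\int_{r_0}^R \frac{d}{dr}(h^{-\delta})\,dr$ converges to $\lim_{r\to R}h^{-\delta}(r) - h^{-\delta}(r_0)$ by monotone convergence. There is also a minor measurability remark — $E$ is open since $h', h, \gamma$ need only be as regular as stated and $h'$ is continuous, $h$ is $C^1$; if $\gamma$ is merely measurable, $E$ is still measurable — but this is routine. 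So the proof is essentially a one-line application of the substitution $u = h^{-\delta}(r)$, and the hard part is just bookkeeping the endpoints $r_0$ and $R$.
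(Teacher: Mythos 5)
Your proposal is correct and follows essentially the same route as the paper: define $E$ as the set where the inequality fails, bound $\int_E\gamma(r)\,dr$ by $\int_{r_0}^{R} h'(r)/h^{1+\delta}(r)\,dr$, and evaluate this by the substitution $t=h(r)$ (your antiderivative $-\tfrac{1}{\delta}h^{-\delta}$ is the same computation, giving the bound $\tfrac{1}{\delta}c^{-\delta}$). The only cosmetic difference is that the paper takes $E\subset(r_0,R)$ from the outset, exactly as in your parenthetical alternative, so no integrability of $\gamma$ near $0$ is needed.
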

\begin{proof}
 Let $E \subset (r_0, R)$ be the set of $r$ such that
$h'(r) \ge h^{1+\delta}(r) \gamma(r).$ Then
$$ \int_E \gamma(r)  dr  
 \leq \int_{r_0}^R  {h'(r)\over  h^{1+\delta} (r)} dr
= \int_c^{\infty} {dt\over t^{1+\delta}}  <\infty$$
which proves the lemma.
\end{proof}
\begin{lemma}\label{calculus} Let $0<R\leq \infty$ and  let $\gamma(r)$ be a function defined 
on $(0, R)$ with 
$\int_0^R \gamma(r) dr=\infty$. 
 Let $h$ be a function of class $C^2$  defined on $(0, R)$ such that 
$rh'$ is a  
nondecreasing function. 
Assume that $\lim_{r\rightarrow R} h(r)=\infty$. Then
$${1\over r} {d\over dr}\left(r{dh\over dr}\right)  \leq r^{\delta} \cdot \gamma^{2+\delta} (r) \cdot h^{(1+\delta)^2} (r)$$
holds outside a set $E\subset (0, R)$ 
with 
$\int_E \gamma(r) dr <\infty$.
\end{lemma}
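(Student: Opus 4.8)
The plan is to apply the first Calculus Lemma twice, once to the function $rh'(r)$ and once to $h$ itself, then combine. The hypothesis that $rh'$ is nondecreasing is exactly what makes $rh'$ eligible to play the role of ``$h$'' in the first lemma, while $h$ itself is eligible because it is increasing (being $C^2$ with $rh'$ nondecreasing and $h\to\infty$, one checks $h'\ge 0$ eventually, so $h$ is nondecreasing for $r$ near $R$; shrink the interval if necessary). First I would set $g(r):=rh'(r)$, observe $g$ is $C^1$, nondecreasing, and tends to $\infty$ (if $g$ stayed bounded then $h'$ would be $O(1/r)$ and $h$ could not blow up when $R<\infty$, while when $R=\infty$ one argues $h$ grows at most logarithmically, again contradicting a suitable normalization — in any case one may assume $g(r_0)\ge c>0$ after translating). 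Apply the first Calculus Lemma to $g$ with exponent $\delta$: outside a set $E_1$ with $\int_{E_1}\gamma<\infty$,
$$g'(r)\le g(r)^{1+\delta}\,\gamma(r).$$
Since $\tfrac1r\tfrac{d}{dr}(r h') = \tfrac{g'(r)}{r}$, this already gives
$$\frac1r\frac{d}{dr}\!\left(r\frac{dh}{dr}\right)\le \frac{g(r)^{1+\delta}\gamma(r)}{r}\le r^{\delta}\gamma(r)\,h'(r)^{1+\delta},$$
using $g(r)=rh'(r)$ so that $g(r)^{1+\delta}/r = r^{\delta}h'(r)^{1+\delta}$.

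Next I would bound $h'(r)$ in terms of $h(r)$. Apply the first Calculus Lemma again, this time to $h$ itself with exponent $\delta$: outside a set $E_2$ with $\int_{E_2}\gamma<\infty$,
$$h'(r)\le h(r)^{1+\delta}\,\gamma(r).$$
Substituting into the previous display, for $r\notin E_1\cup E_2$,
$$\frac1r\frac{d}{dr}\!\left(r\frac{dh}{dr}\right)\le r^{\delta}\gamma(r)\,\bigl(h(r)^{1+\delta}\gamma(r)\bigr)^{1+\delta} = r^{\delta}\,\gamma(r)^{1+(1+\delta)}\,h(r)^{(1+\delta)^2} \le r^{\delta}\,\gamma(r)^{2+\delta}\,h(r)^{(1+\delta)^2},$$
where in the last step I use $\gamma(r)\ge 1$ on the relevant set (if $\gamma$ is not eventually $\ge 1$ one instead absorbs: since $(1+\delta)^2\le 2+\delta$ for $0<\delta<1$, and we only need an upper bound, replacing $\gamma^{1+(1+\delta)}=\gamma^{2+\delta}$ is harmless when $\gamma\ge1$, and when $\gamma<1$ the inequality $\gamma^{2+\delta}\ge \gamma^{2+\delta}$ is trivial but the exponent $1+(1+\delta)=2+\delta$ already matches, so in fact no case split is needed — the exponent on $\gamma$ is exactly $2+\delta$). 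Taking $E:=E_1\cup E_2$ finishes the proof, since $\int_E\gamma\le\int_{E_1}\gamma+\int_{E_2}\gamma<\infty$.

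The main obstacle is the bookkeeping around the hypotheses of the first Calculus Lemma: one must ensure that both $g=rh'$ and $h$ genuinely satisfy ``$\to\infty$'' and ``$\ge c>0$ somewhere'' on the interval where we work. For $h$ this is given; for $g$ it requires the short argument above (if $rh'$ were bounded, $h$ could not tend to infinity on a finite disc, and on $\C$ one reduces to the case $h$ nonconstant so $h'\not\equiv0$ and monotonicity of $rh'$ forces $rh'\to\infty$). Once that is in place, everything else is the routine substitution shown. One should also remark that shrinking $(0,R)$ to $(r_0,R)$ at the start costs nothing, as the excluded initial segment $(0,r_0)$ can be absorbed into $E$ only if $\int_0^{r_0}\gamma<\infty$; if not, one simply states the conclusion for $r$ near $R$, which is the intended reading (the function $\gamma$ will in applications be $\exp((c_{f,\omega}+\epsilon)T_{f,\omega}(r))$, which is locally integrable, so $\int_0^{r_0}\gamma<\infty$ automatically).
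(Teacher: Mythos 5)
Your proposal is correct and is exactly the paper's argument: the paper's proof consists of applying the first Calculus Lemma to $rh'(r)$ and then to $h(r)$ and combining, which is precisely your computation, and your exponents ($r^{\delta}$, $\gamma^{2+\delta}$, $h^{(1+\delta)^2}$) come out as stated with no case split needed. The only wobble is your side remark that $rh'\to\infty$: when $R=\infty$ this can fail (e.g.\ $h(r)=\log r$ gives $rh'\equiv 1$), but it is harmless, since the proof of the first Calculus Lemma never actually uses $\lim h=\infty$ --- the estimate $\int_E\gamma\le\int_c^{\infty}t^{-1-\delta}\,dt<\infty$ only needs the function to be nondecreasing and $\ge c>0$ from some $r_0$ on --- so the application to $rh'$ goes through regardless.
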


\begin{proof} We apply the Calculus lemma
twice, first to the function $rh'(r)$ and then to the function $h(r)$.
\end{proof}
The typical use of the calculus lemma is as follows.
Let $\Gamma$ be a non-negative function on $\bigtriangleup(R)$ with $0<R\leq \infty$.  Define
$$T_{\Gamma}(r):=\int_0^r {dt\over t} \int_{|z|<t}
\Gamma {{\sqrt -1}\over 2\pi} dz\wedge d{\bar z}$$
 and 
$$\lambda(r):= \int_0^{2\pi} \Gamma(re^{i\theta}) {d\theta\over 2\pi}.$$
Using the polar coordinates,
$$ {{\sqrt -1}\over 2\pi} dz\wedge d{\bar z}
=2rdr \wedge {d\theta\over 2\pi}.$$
Hence
$$r{dT_{\Gamma}\over dr}=2\int_0^{2\pi}\left(\int_0^r 
 \Gamma(te^{i\theta})tdt \right){d\theta\over 2\pi},$$
$$
 {d\over dr}\left(r{dT_{\Gamma}\over dr}\right)=2r
\int_0^{2\pi} \Gamma(re^{i\theta}) {d\theta\over 2\pi}= 2r\lambda(r).$$
Thus, from Lemma \ref{calculus}, we have
\begin{equation}\label{cs}
 \lambda(r)\leq  {1\over 2} r^{\delta}\cdot  \gamma^{2+\delta} (r) \cdot T_{\Gamma}^{(1+\delta)^2} (r)
 \end{equation}
holds for all $r \in (0, R)$  outside a set $E$ 
with 
$\int_E \gamma(r) dr <\infty$.  Throughout the paper, we will use the inequality (\ref{cs}) with a properly chosen $\gamma(r)$.

\begin{theorem}[Green-Jensen formula, see\cite{PS}]
Let $g$ be a function on  $\overline{\bigtriangleup(r)}$
 such that $dd^c [g] $ is of order zero and $g(0)$ is finite.
Then
$$\int_0^r {dt\over t} \int_{|\zeta|< t} dd^c [g] 
= {1\over 2}\left(\int_0^{2\pi} 
g(re^{i\theta}) {d\theta\over 2\pi} -g(0)\right).$$
\end{theorem}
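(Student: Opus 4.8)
The plan is to reduce the statement to the classical Jensen formula for subharmonic (or smooth) functions and then extend by a density/regularization argument to functions $g$ whose distributional Laplacian $dd^c[g]$ is merely a measure of order zero. First I would treat the smooth case: if $g\in C^2(\overline{\bigtriangleup(r)})$, then by the definition of $d^c$ one has $dd^c[g]=\tfrac{\sqrt{-1}}{2\pi}\partial\bar\partial g=\tfrac{1}{4\pi}(\Delta g)\,\tfrac{\sqrt{-1}}{2\pi}\,dz\wedge d\bar z$ (up to the normalization fixed in the paper's notation section), so writing everything in polar coordinates $z=te^{i\theta}$ and using $\tfrac{\sqrt{-1}}{2\pi}dz\wedge d\bar z = 2t\,dt\wedge\tfrac{d\theta}{2\pi}$ exactly as in the computation preceding the theorem, the inner integral $\int_{|\zeta|<t}dd^c[g]$ becomes $\tfrac{t}{2}\frac{d}{dt}\big(\int_0^{2\pi} g(te^{i\theta})\tfrac{d\theta}{2\pi}\big)$ by Green's theorem (the mean-value form of it). Dividing by $t$ and integrating $\int_0^r$ telescopes the $t$-derivative and yields exactly $\tfrac12\big(\int_0^{2\pi}g(re^{i\theta})\tfrac{d\theta}{2\pi}-g(0)\big)$; the boundary term at $t=0$ is $g(0)$ because the circular mean of a continuous function tends to its value at the center.

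Next I would remove the smoothness hypothesis. Since $dd^c[g]$ is assumed to be a (signed) measure of order zero, $g$ is locally the difference of two subharmonic functions, hence locally integrable, and one can regularize: let $g_\varepsilon = g * \rho_\varepsilon$ be mollifications. Then $g_\varepsilon\to g$ in $L^1_{loc}$ and, away from the origin, $\int_{|\zeta|<t}dd^c[g_\varepsilon]\to\int_{|\zeta|<t}dd^c[g]$ for a.e.\ $t$ by weak convergence of the measures $dd^c[g_\varepsilon]=(dd^c[g])*\rho_\varepsilon$ to $dd^c[g]$ (using that $dd^c[g]$ puts no mass on generic circles). Applying the smooth case to each $g_\varepsilon$ and passing to the limit on both sides — the left side by dominated convergence in $t$ after controlling $\int_{|\zeta|<t}|dd^c[g]|$ by the finite total variation, the right side because $g_\varepsilon(0)\to g(0)$ (using that $g(0)$ is finite, so $0$ is not a $-\infty$ point of the subharmonic part) and the circular means converge — gives the formula for general $g$. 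Alternatively, and perhaps more cleanly, one can cite Fubini/Tonelli directly: by definition $\int_0^r\tfrac{dt}{t}\int_{|\zeta|<t}dd^c[g] = \int_{|\zeta|<r}\big(\int_{|\zeta|}^r\tfrac{dt}{t}\big)\,dd^c[g] = \int_{|\zeta|<r}\log\tfrac{r}{|\zeta|}\,dd^c[g]$, which is the potential-theoretic Green potential of the measure $dd^c[g]$ against the Green function $\log(r/|\zeta|)$ of the disc, and then invoke the Riesz decomposition theorem: $g(z) = \int\log\tfrac{1}{|z-\zeta|}\,dd^c[g](\zeta) + (\text{harmonic})$ on $\bigtriangleup(r)$, and averaging the harmonic part over the circle $|z|=r$ reproduces its value at $0$ by the mean-value property, while the logarithmic potential contributes precisely $\tfrac12\int\log\tfrac{r}{|\zeta|}\,dd^c[g]$ to the difference of means.

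The main obstacle I anticipate is purely bookkeeping: matching the various normalization constants ($d^c = \tfrac{\sqrt{-1}}{4\pi}(\bar\partial-\partial)$, the $1/2$ in the formula, the factor $2t\,dt$ in polar coordinates) so that the constant comes out to exactly $\tfrac12$ rather than $1$ or $\tfrac14$, and making the limiting argument rigorous at the origin — one must know that $g(0)$ being finite genuinely forces the subharmonic part of $g$ to be finite (hence upper semicontinuous and with a well-defined finite value) at $0$, so that the boundary term $\lim_{t\to 0}\int_0^{2\pi}g(te^{i\theta})\tfrac{d\theta}{2\pi} = g(0)$ is legitimate. Since this is a standard result (stated here with a reference to \cite{PS}), I would keep the argument brief: establish the smooth case by the polar-coordinate computation above, then invoke Riesz decomposition to conclude in general.
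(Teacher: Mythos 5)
The paper gives no proof of this statement at all --- it is quoted with a citation to \cite{PS} --- so there is no argument of the authors to compare yours against; your proof is the standard one and is essentially correct. The smooth case is right: with the paper's normalization $dd^c=\frac{\sqrt{-1}}{2\pi}\partial\bar\partial$, i.e.\ $dd^c g=\frac{1}{4\pi}\Delta g\,dx\,dy$, Green's theorem in polar coordinates does give $\int_{|\zeta|<t}dd^c g=\frac{t}{2}\frac{d}{dt}\int_0^{2\pi}g(te^{i\theta})\frac{d\theta}{2\pi}$, and integrating $dt/t$ telescopes to the stated identity. For the general case the Fubini--Riesz route you sketch is the cleaner one: $\int_0^r\frac{dt}{t}\int_{|\zeta|<t}dd^c[g]=\int_{|\zeta|<r}\log\frac{r}{|\zeta|}\,dd^c[g]$. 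Two small points to tidy, both of which you flag yourself as bookkeeping: with this normalization $dd^c\log|z|^2=\delta_0$, so the Riesz representation should read $g(z)=2\int\log|z-\zeta|\,dd^c[g](\zeta)+h(z)$ rather than the form you wrote; then the difference of circular means equals $2\int\log\frac{r}{|\zeta|}\,dd^c[g]$, and the factor $\frac12$ on the right-hand side of the theorem reproduces exactly the left-hand side. Second, the hypothesis that $g(0)$ is finite must be read as saying that the logarithmic potentials of both the positive and negative parts of the order-zero distribution $dd^c[g]$ are finite at the origin; this is what guarantees $\int_{|\zeta|<r}\log\frac{r}{|\zeta|}\,d\bigl|dd^c[g]\bigr|<\infty$ and $\lim_{t\to 0}\int_0^{2\pi}g(te^{i\theta})\frac{d\theta}{2\pi}=g(0)$, so the boundary term at $t=0$ is legitimate. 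With those two adjustments your argument is complete and matches the standard proof one would find in \cite{PS}.
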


Let $M$ be a compact Riemann surface and let $\omega$ be a positive (1,1) form of class $C^1$ on $M$ such that $\int_M\omega=1$. Consider the equation, in the sense of currents, 
\begin{equation}\label{Poisson}
dd^c u=\omega -\delta_a,
\end{equation}
where $\delta_a$ is the Dirac measure at $a.$
\begin{theorem}
Let $U$ be an open set in a compact Riemann surface $M$ such that $M\backslash U$ consists
of at most a 
finite number of points.

(a) Let  $\omega$  be a positive smooth
(1,1) form of volume 1 on $M$. Let $a\in  M$. Then  
equation (\ref{Poisson}) admits a positive solution $u_a$,  smooth in $M\backslash \{a\}$, with 
a log singularity at the point $a$.

(b)  If $M\backslash U$ is non-empty  and $\omega$ is proportional to  the Poincar\'e form of $M$
so that it is of volume 1, then 
equation  (\ref{Poisson}) admits a positive solution $u_a$,  smooth in $U\backslash \{a\}$, with 
a log singularity at the point $a$.
\end{theorem}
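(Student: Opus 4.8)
The plan is to solve the Poisson-type equation $dd^c u = \omega - \delta_a$ in two stages: first produce \emph{some} distributional solution via Hodge theory / potential theory, then upgrade it to a \emph{positive} one by adding a suitable constant, and finally extract the local behavior near $a$ and away from $a$. For part (a), since $M$ is a compact Riemann surface and $\omega$ is smooth of total mass $1$, the current $\omega - \delta_a$ has total mass $0$, i.e. it is $dd^c$-exact in the sense of currents; equivalently, it is orthogonal to the (one-dimensional) space of harmonic $0$-forms, so the equation $dd^c u = \omega - \delta_a$ is solvable. Concretely, I would write $u = u_a = G(\,\cdot\,, a) + \psi$, where $G$ is (a multiple of) the Green's function of $M$ with respect to $\omega$ and $\psi$ solves the smooth equation $dd^c \psi = \omega - (\text{smoothing error})$; alternatively, invoke directly that the Green's function $g_a$ with $dd^c g_a = \omega - \delta_a$ exists on any compact Riemann surface and is smooth on $M\setminus\{a\}$ with a logarithmic pole $g_a = \log|z - z(a)| + (\text{smooth})$ in a local coordinate $z$ near $a$. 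This gives existence, smoothness on $M\setminus\{a\}$, and the log singularity at $a$ automatically. The only remaining point for (a) is positivity: a priori $g_a$ is only bounded below (it is continuous on the compact set $M$ away from a neighborhood of $a$, and tends to $-\infty$ at $a$ — wait, that's the wrong sign). Here I should be careful about the sign convention: with $dd^c u = \omega - \delta_a$ and $\delta_a$ the obstruction, $u$ must tend to $+\infty$ at $a$, so the local model is $u_a = -\log|z - z(a)| + O(1)$, which is $+\infty$ at $a$; then $u_a$ is bounded below on all of $M$ (it is lower semicontinuous and $\to +\infty$ at its only singularity), and replacing $u_a$ by $u_a - \min_M u_a \geq 0$ — which changes nothing since $dd^c$ kills constants — yields the desired positive solution.

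For part (b), $M\setminus U$ is a nonempty finite set of points $p_1,\dots,p_m$, and now $\omega$ is the Poincaré form of $M$ scaled to mass $1$; the novelty is that we want $u_a$ smooth on $U\setminus\{a\}$ but we are \emph{allowed} to let $u_a$ blow up at the $p_i$. I would take $u_a = g_a$ the same Green's function as in (a) — it is already smooth on $M\setminus\{a\} \supset U\setminus\{a\}$ — but the point of (b) is presumably a different normalization adapted to the cusps/punctures: one wants $u_a$ to be a \emph{positive} function on $U$ that still solves $dd^c u_a = \omega - \delta_a$ on $U$. The clean way is to allow an additive modification by a nonnegative multiple of the proper transform of $-\log$ of the distance to $M\setminus U$: since $M\setminus U$ is nonempty, there is a nonconstant bounded-above subharmonic (indeed, a Green-type) function on $U$ with singularities at the $p_i$, and adding a large positive multiple of $+\infty$-at-the-cusps potential forces positivity on $U$ without disturbing $dd^c u_a = \omega$ away from $a$ and the $p_i$. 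More precisely, near each $p_i$ choose a coordinate $z$ with $p_i = \{z = 0\}$ and set $v_i = -\log|z|$ locally, extended to a global function on $M$ that is $dd^c$-of-order-zero and harmonic off a neighborhood of $p_i$; then $u_a := g_a + \sum_i t_i v_i + C$ for suitable $t_i \geq 0$ and $C$ is smooth on $U\setminus\{a\}$, has the right $dd^c$ on $U$, and is $\geq 0$.

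The key steps, in order: (1) cite or reprove the existence of the Green's function $g_a$ on a compact Riemann surface solving $dd^c g_a = \omega - \delta_a$, with its smoothness and logarithmic pole — this is standard potential theory (e.g. via the Hodge decomposition for $\omega - \delta_a$, which has mean zero). (2) Check the sign: $u_a \to +\infty$ at $a$, hence $u_a$ is bounded below on $M$. (3) Normalize by subtracting the minimum to achieve $u_a \geq 0$, proving (a). (4) For (b), adjoin nonnegative cusp-potentials $\sum t_i v_i$ to handle the required behavior along $M\setminus U$ while keeping $dd^c u_a = \omega$ on $U$, then again add a constant for positivity. I expect the main obstacle to be step (4): pinning down exactly what "positive solution smooth in $U\setminus\{a\}$ with a log singularity at $a$" should mean at the punctures $p_i$ — whether one needs $u_a$ merely bounded below on $U$, or genuinely well-behaved at the cusps — and verifying that the Poincaré normalization of $\omega$ is what makes the cusp contributions combine correctly. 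Everything else is a routine invocation of the Green's function together with the observation, already recorded in the excerpt, that $dd^c$ annihilates constants and that such potentials have $dd^c$ of order zero so the Green–Jensen formula applies.
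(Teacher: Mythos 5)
Your part (a) is fine and is essentially the paper's own argument: solvability because the right-hand side has total mass zero (vanishing cohomology class), elliptic regularity off $a$ plus the local model $-\log|z-z(a)|$ at $a$, and positivity by adding a constant.

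Part (b) contains a genuine gap, and it sits exactly at the step you yourself flag as the main obstacle. First, your fallback ``take $u_a=g_a$, the same Green's function as in (a), which is already smooth on $M\setminus\{a\}$'' ignores that in (b) the form $\omega$ is the Poincar\'e form of $U$, which is \emph{singular} at the points of $M\setminus U$: near such a point it behaves like $c\,dz\wedge d\bar z/(|z|^2(\log|z|)^2)$ (finite volume, but unbounded density). The solution of the global equation on $M$ then behaves near each puncture like the logarithmic potential of this cusp measure, i.e. like $-c\log\log(1/|z|)+O(1)$, so it tends to $-\infty$ at every point of $M\setminus U$; in particular it is not bounded below on $U$ and no additive constant can make it positive. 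Second, your repair $u_a:=g_a+\sum_i t_i v_i+C$ does not solve the equation on $U$: a function equal to $-\log|z|$ near $p_i$ and smooth elsewhere cannot have vanishing $dd^c$ on all of $U\setminus\{a\}$, because on the compact surface $M$ the total mass of $dd^c v_i$ is zero, so the atom $-\delta_{p_i}$ it creates must be compensated by positive $dd^c$-mass located where your cutoff acts, i.e. inside $U$. Hence $dd^c u_a=\omega-\delta_a+\sum_i t_i\,dd^c v_i$ differs from $\omega-\delta_a$ on an open subset of $U$, and the First Main Theorem/proximity function built from such a $u_a$ would be wrong. Nor can the defect be fixed by a cleverer choice of correction: a correction that is genuinely harmonic on $U$ and tends to $+\infty$ at every puncture is ruled out by the minimum principle on $U=M\setminus\{p_1,\dots,p_m\}$.

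What part (b) actually requires---and what the paper does, however tersely---is to use the specific hypothesis that $\omega$ is the (finite-volume) Poincar\'e form: one studies the boundary behavior of the solution $u_a$ at the points of $M\setminus U$ by comparison with the Poincar\'e metric of the punctured disc (curvature $-1$, with the explicit cusp asymptotics quoted above), and the positivity statement is extracted from that analysis of $u_a$ near the cusps, not by superposing auxiliary potentials onto the solution from (a). Your proposal never invokes the curvature or the cusp asymptotics, so it misses the one ingredient that distinguishes (b) from (a); and, as the two failures above show, the behavior of $u_a$ at the punctures is precisely the delicate point on which the whole statement turns.
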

\begin{proof} (a) Since the cohomology class of the right hand side is zero,
 equation  (\ref{Poisson})  always has a solution. The regularity in the complement
of $a$ and the behavior at $a$ imply that $u_a$ is smooth in $M\backslash \{a\}$, with 
a log singularity at the point $a$. By adding a constant if necessary, it  gives the positivity
of $u_a$. This proves the case (a). 

The proof of case (b) is similar. 
Note that the
  Poincar\'e metric at the points in $M\backslash U$ 
behaves like $cdz\wedge d{\bar z}/(|z|^2(\log|z|)^2)$, which has finite volume. Using that the Poincar\'e metric of the pointed disc has curvature $-1$  we can by comparison establish  that 
the solution  $u_a$ goes to $+\infty$ when approaching the points at the boundary. This gives the positivity
of $u_a$.
\end{proof}
Let $a\in U$ and $u_a$ be the solution of the equation  (\ref{Poisson}). We define  the proximity function 
\begin{equation}\label{proximity}
 m_{f, \omega}(r, a)= {1\over 2} \int_0^{2\pi} u_a(f(re^{i\theta})) {d\theta\over 2\pi}
 \end{equation}
 and the counting function
 \begin{equation}\label{counting}
 N_f(r, a)=\int_0^r {n_f(t, a)\over t} dt
 \end{equation}
 where $n(r, a)$ is the number of the elements of $f^{-1}(a)$ inside $|z|<r$, counting multiplicities (for simplicity we assume $0$ is not in $f^{-1}(a)$).

By applying the integral operator
$$\int_0^r {dt\over t} \int_{|\zeta|\leq t} \cdot$$
to the  equation  (\ref{Poisson})
 and using the Green-Jensen's formula, we get
\begin{theorem}[First Main Theorem] $$m_{f, \omega}(r, a) + N_f(r, a) = T_{f, \omega}(r) + O(1).$$
\end{theorem}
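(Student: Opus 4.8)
The First Main Theorem here is the standard identity that one obtains by integrating the Poisson-type equation $dd^c u_a = \omega - \delta_a$ against the Green operator $\int_0^r \tfrac{dt}{t}\int_{|\zeta|\le t}$, pulled back under $f$. So the strategy is entirely formal: pull back equation \eqref{Poisson} by the holomorphic map $f$, apply the integral operator, and identify the three resulting terms with $m_{f,\omega}(r,a)$, $N_f(r,a)$, and $T_{f,\omega}(r)$ respectively, using the Green-Jensen formula stated above.

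**Key steps.** First I would pull back the current identity $dd^c u_a = \omega - \delta_a$ under $f$, obtaining $dd^c[u_a\circ f] = f^*\omega - f^*\delta_a$ as currents on $\bigtriangleup(R)$. Here I must be slightly careful: $u_a\circ f$ has $\log$-singularities exactly along $f^{-1}(a)$ (with multiplicities given by the local degree of $f$), and $f^*\delta_a$ is the integration current over the divisor $\sum_{f(z)=a} (\text{mult})\,[z]$, by the standard functoriality of the Lelong–Poincaré equation for the holomorphic map $f$. Second, apply the operator $\int_0^r \tfrac{dt}{t}\int_{|\zeta|\le t}$ to both sides. On the right: $\int_0^r\tfrac{dt}{t}\int_{|\zeta|<t} f^*\omega = T_{f,\omega}(r)$ by the definition of the characteristic function, and $\int_0^r\tfrac{dt}{t}\int_{|\zeta|\le t} f^*\delta_a = \int_0^r \tfrac{n_f(t,a)}{t}\,dt = N_f(r,a)$ by the definition \eqref{counting} of the counting function (this is just Jensen's formula for the divisor, or equivalently $\int_0^r \log^+\tfrac{r}{|z|}$ summed over the zeros). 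On the left: the Green-Jensen formula gives $\int_0^r\tfrac{dt}{t}\int_{|\zeta|<t} dd^c[u_a\circ f] = \tfrac12\big(\int_0^{2\pi} u_a(f(re^{i\theta}))\tfrac{d\theta}{2\pi} - u_a(f(0))\big) = m_{f,\omega}(r,a) - \tfrac12 u_a(f(0))$, using the definition \eqref{proximity} of $m_{f,\omega}$. Combining, $m_{f,\omega}(r,a) - \tfrac12 u_a(f(0)) = T_{f,\omega}(r) - N_f(r,a)$, i.e. $m_{f,\omega}(r,a) + N_f(r,a) = T_{f,\omega}(r) + O(1)$, the constant being $\tfrac12 u_a(f(0))$ (finite since we assumed $0\notin f^{-1}(a)$, so $f(0)\ne a$ and $u_a$ is finite and smooth near $f(0)$).

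**Main obstacle.** The only genuinely delicate point is justifying that $dd^c[u_a\circ f]$ has order zero so that the Green-Jensen formula applies, and that its Green transform is literally $m_{f,\omega}(r,a) - \tfrac12 u_a(f(0))$ despite the logarithmic poles of $u_a\circ f$. This is handled by the standard smoothing/approximation argument: away from $f^{-1}(a)$ everything is smooth, and near a point of $f^{-1}(a)$ one has $u_a\circ f = m\log|z-z_0| + (\text{smooth})$ where $m$ is the multiplicity, so $dd^c[u_a\circ f]$ differs from a smooth form by $m[z_0]$, which is a measure; hence $dd^c[u_a\circ f]$ is of order zero. The contribution of these point masses to the left-hand Green integral is exactly $N_f(r,a)$, which is why it reassembles into the First Main Theorem. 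One should also note $f$ is non-constant so these preimages are isolated; the boundary circle $|z|=r$ can be assumed (for a.e. $r$, or all $r$ by continuity of both sides) to avoid $f^{-1}(a)$. Everything else is bookkeeping with the definitions already in place.
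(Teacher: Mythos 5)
Your proof is correct and follows the same route as the paper: pull back the Poisson-type equation (\ref{Poisson}) by $f$, apply the operator $\int_0^r \frac{dt}{t}\int_{|\zeta|\le t}\cdot$, and invoke the Green--Jensen formula, identifying the three resulting terms with $T_{f,\omega}(r)$, $N_f(r,a)$ and $m_{f,\omega}(r,a)$ up to the bounded constant $\tfrac12 u_a(f(0))$. The paper states this in one line; your added care about the order-zero property of $dd^c[u_a\circ f]$ and the multiplicities along $f^{-1}(a)$ is exactly the bookkeeping it leaves implicit.
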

The  defect  for $f$ with $c_{f, \omega}<\infty$, is given by,
$$\delta_{f, \omega}(a): =\liminf_{r\rightarrow R} {m_{f, \omega}(r, a)\over T_{f, \omega}(r)} = 
1- \limsup_{r\rightarrow R} {N_f(r, a)\over T_{f, \omega}(r)}, ~~~~~\delta_{f}(a): =\delta_{f, \omega_{FS}}(a).$$

\noindent{\it Proof of Theorem\ref{tha}.}  
Consider
$$\Psi=C \left(\prod_{j=1}^q (u^{-2}_{a_j}\exp(u_{a_j}))\right) \omega
$$
where $C$ is chosen such that $\int_M\Psi=1$. 
Write
$$f^*\Psi=\Gamma { \sqrt{-1}\over 2\pi} d\zeta\wedge d{\bar \zeta}.$$
Then, by the Poincar\'e-Lelong formula,
$$
dd^c [\log \Gamma]
= \sum_{j=1}^q dd^c [u_{a_j}\circ f]+  [f^*\mbox{Ric}(\omega)]
  +
D_{f, \mbox{ram}}- 2\sum_{j=1}^q dd^c [\log u_{a_j}\circ f].$$
Applying the integral operator
$$\int_0^r {dt\over t} \int_{|\zeta|\leq t} \cdot$$
to the above identity and using  the Green-Jensen's formula, we get
\begin{eqnarray*}
{1\over 2}\int_0^{2\pi} \log \Gamma(re^{i\theta}) {d\theta\over 2\pi} + O(1) &=& \sum_{j=1}^q m_f(r, a_j)
+ T_{f, \mbox{Ric}(\omega)}(r)+N_{f, \mbox{ram}}(r)  \\
&& -2\sum_{j=1}^q \int_0^r {dt\over t} \int_{|\zeta|\leq t} dd^c [\log u_{a_j}\circ f].
\end{eqnarray*}
Using the Green-Jensen formula,  the concavity of log and the First Main Theorem, we get
\begin{eqnarray*}
&~&2\int_0^r {dt\over t} \int_{|\zeta|\leq t}  dd^c [\log u_{a_j}\circ f] 
=  \int_0^{2\pi} \log  u_{a_j} (f(re^{i\theta})) {d\theta\over 2\pi} +O(1) \\
&\leq&  \log \int_0^{2\pi}  u_{a_j} (f(re^{i\theta})) {d\theta\over 2\pi}+O(1)= \log m_{f, \omega}(r, a_j)+O(1)\\
&~&\leq \log  T_{f, \omega}(r) +O(1).
\end{eqnarray*}

Using the concavity of $\log$ and   (\ref{cs}) by taking $\gamma(r): =\exp((c_{f, \omega}+\epsilon) T_{f, \omega}(r)) $ and $\delta=2\epsilon$, we have, 
\begin{eqnarray*}
&~&{1\over 2}\int_0^{2\pi}\log \Gamma(re^{i\theta}){d\theta\over 2\pi}
\leq {1\over 2}\log \int_0^{2\pi}  \Gamma (re^{i\theta}){d\theta\over 2\pi} 
+O(1)\\
&\leq& {1\over 2}
 \left((2+2\epsilon)(c_{f, \omega}+\epsilon) T_{f,  \omega}(r) + (1+2\epsilon)^2 \log^+  T_{\Gamma}(r)+2\epsilon \log r \right) +O(1)
\end{eqnarray*}
holds for all $r \in (0, R)$  outside a set $E$ 
with 
$\int_E \exp((c_{f, \omega}+\epsilon) T_{f, \omega}(r))dr <\infty$.
It remains  to estimate
$$T_{\Gamma}(r)
=\int_0^r {dt\over t} \int_{|\zeta|\leq t}
\Gamma {{\sqrt -1}\over 2\pi} d\zeta \wedge d{\bar \zeta}
=\int_0^r {dt\over t} \int_{|\zeta|\leq t}f^* \Psi.$$
 We follow the approach by Ahlfors-Chern.
 The change of variable formula gives, 
$$\int_M n_f(r, a) \Psi(a) = \int_{|\zeta|\leq r} f^*\Psi.$$
So, using the First Main Theorem, 
$$\int_0^r {dt\over t}  \int_{|\zeta|\leq t} f^*\Psi = \int_M N_f(r, a) \Psi(a)
 \leq \int_M T_{f, \omega}(r)  \Psi(a) + O(1) 
=   T_{f, \omega}(r) +O(1).$$  This finishes the proof of Theorem \ref{tha}.

A  similar idea can be carried out to prove Theorem \ref{g}, we have just to use Theorem 3.4 (b).

\noindent{\it Proof of  Theorem \ref{thas}}. 
 Write $f^*\omega = h { \sqrt{-1}\over 2\pi} d\zeta\wedge d{\bar \zeta}.$
Then, by the Poincar\'e-Lelong formula,
$$
dd^c [\log h]= f^*\mbox{Ric}(\omega)
  +
D_{f, \mbox{ram}} =\mbox{Ric}(f^*\omega) + 
D_{f, \mbox{ram}},$$
where $D_{f, \mbox{ram}}$ is the  ramification divisor of $f$.
The curvature assumption implies that  $$dd^c [\log h]\ge D_{f, \mbox{ram}} + \lambda f^*\omega.$$
Applying the integral operator
$$\int_0^r {dt\over t} \int_{|\zeta|\leq t} \cdot$$
to the above identity and using the Green-Jensen's formula, we get
$$
{1\over 2}\int_0^{2\pi} \log h(re^{i\theta}){
d\theta\over 2\pi} +O(1) \ge  \lambda T_{f, \omega}(r)+ N_{f, \mbox{ram}}(r).$$
On the other hand, using the concavity of $\log$ and   (\ref{cs}) by taking $\gamma(r): =\exp((c_{f, \omega}+\epsilon) T_{f, \omega}(r)) $ and  $\delta=2\epsilon$, it follows
\begin{eqnarray*}
\lefteqn{{1\over 2}\int_0^{2\pi}\log h(re^{i\theta}){d\theta\over 2\pi}
\leq {1\over 2}\log \int_0^{2\pi}  h(re^{i\theta}){d\theta\over 2\pi} +O(1)} \\
&\leq& {1\over 2}
 \left((2+2\epsilon) (c_{f, \omega}+\epsilon) T_{f, \omega}(r) + (1+2\epsilon)^2 \log^+  T_{f, \omega}(r)+2\epsilon \log r \right)
\end{eqnarray*}
holds for all $r \in (0, R)$  outside a set $E$ 
with 
$\int_E \exp((c_{f, \omega}+\epsilon) T_{f, \omega}(r))dr <\infty$.
This finishes the proof. 

\section{Holomorphic mappings  into ${\Bbb  P}^n({\Bbb C})$.}
In this section, we prove Theorem \ref{thb}. 
 We follow Ahlfors' method with some simplifications (see  \cite{Ahlfors}, \cite{CG}, \cite{Sh}, \cite{Ru} or  \cite{SW}).
However we treat differently the error term.
The key  is to use  (\ref{cs}) by 
letting $\gamma(r): =\exp((c_f+\epsilon)T_f(r))$ for a given $\epsilon$, 
where $T_f(r):= T_{f, \omega_{FS}}(r)$. In the following we use the notation ``$\leq ~\|$" to denote the inequality holds for all 
$r\in (0, R)$ except for a set $E$ with $\int_E \exp((c_f+\epsilon)T_f(r)) dr<\infty$.   We always  assume that the holomorphic map
 $f: \bigtriangleup(R) \rightarrow {\Bbb P}^n({\Bbb C})$ is linearly non-degenerate (except in  the last section {\bf E}) with $c_f<\infty$ .

\bigskip
\noindent{\bf A. Associated curves and the Pl$\ddot{u}$cker's formula}.   
Let ${\bf f}:\bigtriangleup(R) \rightarrow
{\Bbb C}^{n+1} - \{0\}$  be a reduced representation of $f$.
Consider the holomorphic map  ${\bf F}_k$ defined by
$${\bf F}_k=
{\bf f} \wedge  {\bf f}^\prime
\wedge  \cdots  \wedge  {\bf f}^{(k)}: \bigtriangleup(R) \rightarrow
\bigwedge^{k+1}{\Bbb  C}^{n+1}.$$
Evidently ${\bf F}_{n+1} \equiv 0$. Since 
 $f$ is linearly non-degenerate,  ${\bf F}_{k} \not\equiv 0$
for $0 \leq k \leq n$.
The map $F_k= {\Bbb  P}({\bf F}_k):  \bigtriangleup(R) \rightarrow {\Bbb P}(
\bigwedge^{k+1}{\Bbb  C}^{n+1})= {\Bbb P}^{N_k}({\Bbb C})$,  where
$N_k= {(n+1)!\over (k+1)!(n-k)!} - 1$
and ${\Bbb P}$ is the natural projection,
is called {\bf the $k$-th associated map}. Let $\omega_k=dd^c \log \|Z\|^2$ be
the
Fubini-Study form
on ${\Bbb P}^{N_k}({\Bbb C})$, where $Z=[x_0: \dots: x_{N_k}] \in
{\Bbb P}^{N_k}({\Bbb C})$.
Let
\begin{equation}\label{hk}
\Omega_k = F^*_k\omega_k = {\sqrt{-1}\over 2\pi} h_k dz\wedge d{\bar z},
 ~~0 \leq k \leq n, 
\end{equation}
be the pull-back via the $k$-th associated curve. Observe  that since $F_k$ has no indeterminacy points,  $\Omega_k = F^*_k\omega_k$ is smooth and $h_k$ is non-negative. 

We recall the 
following lemma (see \cite{GH}, \cite{Sh}, \cite{Ru} or  \cite{SW}).

\begin{lemma}\label{h1}  $$h_k=  {
\|{\bf F}_{k-1}\|^2\|{\bf F}_{k+1}\|^2 \over \|{\bf F}_{k}\|^4}.$$
\end{lemma}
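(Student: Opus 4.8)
The plan is to compute $\Omega_k = F_k^*\omega_k$ directly from the definition of the Fubini–Study form. Since $\omega_k = dd^c\log\|Z\|^2$ on $\mathbb{P}^{N_k}(\mathbb{C})$ and $F_k = \mathbb{P}(\mathbf{F}_k)$ with $\mathbf{F}_k = \mathbf{f}\wedge\mathbf{f}'\wedge\cdots\wedge\mathbf{f}^{(k)}$ a holomorphic (non-vanishing, since $f$ is linearly non-degenerate and we chose a reduced representation) lift, we have
$$\Omega_k = dd^c\log\|\mathbf{F}_k\|^2 = \frac{\sqrt{-1}}{2\pi}\,\partial\bar\partial\log\|\mathbf{F}_k\|^2 = \frac{\sqrt{-1}}{2\pi}\,\frac{\partial^2}{\partial z\,\partial\bar z}\bigl(\log\|\mathbf{F}_k\|^2\bigr)\,dz\wedge d\bar z,$$
so that $h_k = \dfrac{\partial^2}{\partial z\,\partial\bar z}\log\|\mathbf{F}_k\|^2$. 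Thus the whole lemma reduces to the identity
$$\frac{\partial^2}{\partial z\,\partial\bar z}\log\|\mathbf{F}_k\|^2 \;=\; \frac{\|\mathbf{F}_{k-1}\|^2\,\|\mathbf{F}_{k+1}\|^2}{\|\mathbf{F}_k\|^4},$$
with the convention $\|\mathbf{F}_{-1}\| \equiv 1$. This is a purely algebraic/computational identity about the Wronskian-type vectors $\mathbf{F}_k$.

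The key computational steps I would carry out are as follows. First, writing $u_k := \|\mathbf{F}_k\|^2 = \langle \mathbf{F}_k, \mathbf{F}_k\rangle$ (Hermitian inner product on $\bigwedge^{k+1}\mathbb{C}^{n+1}$), one has $\partial_z\partial_{\bar z}\log u_k = \dfrac{(\partial_z\partial_{\bar z}u_k)\,u_k - (\partial_z u_k)(\partial_{\bar z}u_k)}{u_k^2}$, so I must show the numerator equals $\|\mathbf{F}_{k-1}\|^2\|\mathbf{F}_{k+1}\|^2$. Now $\partial_{\bar z}\mathbf{F}_k = 0$ since $\mathbf{F}_k$ is holomorphic, hence $\partial_{\bar z}u_k = \langle \mathbf{F}_k, \mathbf{F}_k'\rangle$ (derivative hitting the anti-holomorphic slot) — more precisely $\partial_z u_k = \langle \mathbf{F}_k', \mathbf{F}_k\rangle$ and $\partial_{\bar z} u_k = \langle \mathbf{F}_k, \mathbf{F}_k'\rangle$, and $\partial_z\partial_{\bar z}u_k = \langle \mathbf{F}_k', \mathbf{F}_k'\rangle$, because $\mathbf{F}_k'' $ pairs against $\mathbf{F}_k$ only through the holomorphic derivative which already appeared. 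So the numerator is $\langle \mathbf{F}_k',\mathbf{F}_k'\rangle\langle \mathbf{F}_k,\mathbf{F}_k\rangle - \langle \mathbf{F}_k',\mathbf{F}_k\rangle\langle \mathbf{F}_k,\mathbf{F}_k'\rangle = \|\mathbf{F}_k'\|^2\|\mathbf{F}_k\|^2 - |\langle \mathbf{F}_k',\mathbf{F}_k\rangle|^2$, which by the Cauchy–Schwarz/Gram-determinant identity equals $\|\mathbf{F}_k' - \mathrm{proj}_{\mathbf{F}_k}\mathbf{F}_k'\|^2 \cdot \|\mathbf{F}_k\|^2$, i.e. the squared length of the component of $\mathbf{F}_k'$ orthogonal to $\mathbf{F}_k$, times $\|\mathbf{F}_k\|^2$.

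The main obstacle — and the genuinely content-bearing step — is then to identify this quantity with $\|\mathbf{F}_{k-1}\|^2\|\mathbf{F}_{k+1}\|^2$. For this I would use the two structural facts about the derived vectors: (i) $\mathbf{F}_k' = \mathbf{f}\wedge\mathbf{f}'\wedge\cdots\wedge\mathbf{f}^{(k-1)}\wedge\mathbf{f}^{(k+1)}$ (all other terms in the Leibniz expansion contain a repeated factor $\mathbf{f}^{(i)}$ and hence vanish), so $\mathbf{F}_{k+1} = \mathbf{F}_{k-1}\wedge \mathbf{f}^{(k)}\wedge\mathbf{f}^{(k+1)}$ and $\mathbf{F}_k = \mathbf{F}_{k-1}\wedge\mathbf{f}^{(k)}$, $\mathbf{F}_k' \equiv \mathbf{F}_{k-1}\wedge\mathbf{f}^{(k+1)} \pmod{\mathbf{F}_{k-1}\wedge(\text{lower})}$; and (ii) the elementary linear-algebra fact that for a vector $v$ and a subspace $W = \mathrm{span}(w_1,\dots,w_m)$, $\|w_1\wedge\cdots\wedge w_m\wedge v\|^2 = \|w_1\wedge\cdots\wedge w_m\|^2 \cdot \mathrm{dist}(v, W)^2$. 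Applying (ii) with $W = \mathrm{span}(\mathbf{f},\dots,\mathbf{f}^{(k)})$ so that $w_1\wedge\cdots\wedge w_{k+1} = \mathbf{F}_k$ and $v = \mathbf{f}^{(k+1)}$ gives $\|\mathbf{F}_{k+1}\|^2 = \|\mathbf{F}_k\|^2 \cdot \mathrm{dist}(\mathbf{f}^{(k+1)}, W)^2$; applying it with $W' = \mathrm{span}(\mathbf{f},\dots,\mathbf{f}^{(k-1)})$ and $v = \mathbf{f}^{(k)}$ gives $\|\mathbf{F}_k\|^2 = \|\mathbf{F}_{k-1}\|^2\,\mathrm{dist}(\mathbf{f}^{(k)},W')^2$. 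The remaining task is to reconcile $\mathrm{dist}(\mathbf{F}_k', \mathbb{C}\mathbf{F}_k)^2$ with $\mathrm{dist}(\mathbf{f}^{(k+1)}, W)^2$ under the wedge map $\xi \mapsto \mathbf{F}_{k-1}\wedge\xi$; this map is an isometry (up to the factor $\|\mathbf{F}_{k-1}\|^2$) from the orthogonal complement of $W'$ onto $\mathbf{F}_{k-1}\wedge(W'^\perp)$, and it carries $\mathbf{f}^{(k)} \mapsto \mathbf{F}_k$ and (the relevant component of) $\mathbf{f}^{(k+1)} \mapsto \mathbf{F}_k'$, which closes the identity. I expect the bookkeeping of "modulo repeated-factor terms" in item (i) and the careful statement of the isometry in the last step to be where the care is needed; everything else is routine.
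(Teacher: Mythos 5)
Your proof is correct and is essentially the classical computation that the paper does not reproduce but simply cites (\cite{GH}, \cite{Sh}, \cite{Ru}, \cite{SW}): reduce $h_k=\partial_z\partial_{\bar z}\log\|\mathbf{F}_k\|^2$ to the Gram-determinant expression $\bigl(\|\mathbf{F}_k\|^2\|\mathbf{F}_k'\|^2-|\langle\mathbf{F}_k',\mathbf{F}_k\rangle|^2\bigr)/\|\mathbf{F}_k\|^4$ and then use $\mathbf{F}_k'=\mathbf{F}_{k-1}\wedge\mathbf{f}^{(k+1)}$ together with the fact that wedging with $\mathbf{F}_{k-1}$ is a scaled isometry on the orthogonal complement of $\mathrm{span}(\mathbf{f},\dots,\mathbf{f}^{(k-1)})$, exactly as in those references. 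One small correction: even for a reduced representation $\mathbf{f}$, the lifts $\mathbf{F}_k$ with $k\ge 1$ may vanish at isolated stationary points (linear non-degeneracy only gives $\mathbf{F}_k\not\equiv 0$), so the identity is to be read off that discrete set, which is all that the later Poincar\'e--Lelong argument requires.
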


We now turn to the Pl$\ddot{u}$cker Formula.
By Lemma \ref{h1} and the Poincar\'e-Lelong formula, we get
\begin{equation}\label{hkexpression}
dd^c \log h_k = \Omega_{k-1} + \Omega_{k+1} - 2\Omega_k+[h_k=0] .
\end{equation}
where $[h_k=0]$ is the zero divisor of $h_k$. We recall a 
few facts on the geometric meaning of this 
divisor (see \cite{GH}, \cite{Sh}). We  consider the point $z_0$ with ${\bf F}_{k}(z_0)=0$. 
Without loss of generality, we assume that $z_0=0$ and $f(z_0)=[1:0:\cdots:0]$
and that the reduced representation ${\bf f}$ of $f$  in a neighborhood of $0$ has the form
$${\bf f}(z)=(1+\cdots , z^{\nu_1}+\cdots, \cdots, z^{\nu_n}+\cdots), $$
with $1\leq \nu_1\leq \cdots \leq\nu_n.$
Then it is easy to get that 
$${\bf F}_k(z)=z^{m_k}(1+\cdots, z^{\nu_{k+1}-\nu_k}+\cdots, \dots),$$
where $m_k= \nu_1+\cdots+\nu_k -{k(k+1)\over 2}$.  On the other hand, if we write  in a neighborhood of $0$, 
$h_k(z)=z^{2\mu_k} b(z)$ with $b(0)>0$, then, it is easy to get $\mu_k=m_{k+1}-2m_k+m_{k-1}$ (see  \cite{GH}).
  
 Define the $k$th characteristic function
$$
T_{F_k}(r) = \int_0^r {dt\over t} \int_{|z| \leq t}  F^*_k\omega_k.
$$
Denote by
$$
N_{d_k}(r) = \int_0^r n_{d_k}(t) {dt\over t}
$$
where  $n_{d_k}(t)$ is the number of zeros of the $h_k$ in $|z| < t$, counting multiplicities. Note that
$N_{d_k}(r, s)$ does not depend on the choice of the reduced representation.
Define
\begin{equation}\label{sk}
S_k(r) = {1\over 2}\int_0^{2\pi} \log h_k(re^{i\theta}) {d\theta\over 2\pi}.
\end{equation}
Then, by applying the integral operator
$$\int_0^r {dt\over t} \int_{|\zeta|\leq t} \cdot$$
to  (\ref{hkexpression}) and using the Green-Jensen's formula, we get the  following lemma.
\begin{lemma}[Pl$\ddot{u}$cker  Formula]\label{h2}
  For  any  integers $k$ with $0 \leq k \leq n$,
\[ N_{d_k}(r) + T_{F_{k-1}}(r) - 2 T_{F_{k}}(r) + T_{F_{k+1}}(r)
= S_k(r) +O(1)\]
where $T_{F_{-1}}(r) \equiv 0$ and  $T_{F_0}(r) = T_f(r)$.
\end{lemma}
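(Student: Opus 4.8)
The plan is to obtain the formula directly by applying the averaging operator $\int_0^r \frac{dt}{t}\int_{|\zeta|\le t}(\cdot)$ to the Poincar\'e--Lelong identity \eqref{hkexpression} and reading off the four terms that result. No extremal set or error-term machinery is needed here: for each fixed $r$ this is an identity up to an additive constant.

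First I would record that the right-hand side of \eqref{hkexpression} is a current of order zero. By \eqref{hk} each $\Omega_j$ is a smooth $(1,1)$-form, and $[h_k=0]$ is an effective divisor, i.e.\ a locally finite sum of point masses; hence $dd^c[\log h_k]$ is of order zero, and the Green--Jensen formula quoted above applies to $g=\log h_k$ provided $g(0)$ is finite. As with the standing convention for $N_f(r,a)$, I would assume --- after a generic rotation of coordinates, which changes nothing --- that the origin lies on none of the divisors $[h_k=0]$, so that $\log h_k(0)$ is a finite constant; the local normal form $h_k(z)=z^{2\mu_k}b(z)$ with $b(0)>0$ recorded earlier shows that away from those finitely many points $\log h_k$ is locally bounded, so this hypothesis of Green--Jensen is genuinely met.

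Next I would apply $\int_0^r \frac{dt}{t}\int_{|\zeta|\le t}(\cdot)$ to \eqref{hkexpression}. On the left, Green--Jensen gives $\tfrac12\bigl(\int_0^{2\pi}\log h_k(re^{i\theta})\tfrac{d\theta}{2\pi}-\log h_k(0)\bigr)=S_k(r)+O(1)$, using the definition \eqref{sk} of $S_k$. On the right, the three smooth pieces integrate, by the very definition $T_{F_j}(r)=\int_0^r\frac{dt}{t}\int_{|z|\le t}F_j^*\omega_j$ of the associated-curve characteristic functions, to $T_{F_{k-1}}(r)+T_{F_{k+1}}(r)-2T_{F_k}(r)$, while the divisor term integrates, by the definition of $N_{d_k}$ through the zeros of $h_k$ counted with the multiplicities $\mu_k$ supplied by the normal form, to $N_{d_k}(r)$. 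Equating the two sides and rearranging yields precisely
\[
N_{d_k}(r)+T_{F_{k-1}}(r)-2T_{F_k}(r)+T_{F_{k+1}}(r)=S_k(r)+O(1),
\]
and for the endpoint indices one invokes the stated conventions (${\bf F}_{-1}\equiv 1$, hence $\Omega_{-1}=0$ and $T_{F_{-1}}\equiv 0$, and correspondingly for the top associated curve).

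Since the argument is a term-by-term unpacking of \eqref{hkexpression}, there is no deep obstacle; the only points that need vigilance are the two hypotheses of Green--Jensen --- that $dd^c[\log h_k]$ be of order zero (immediate from \eqref{hkexpression}) and that $\log h_k(0)$ be finite (arranged by the genericity assumption on the origin) --- together with checking that the constant $-\tfrac12\log h_k(0)$ is genuinely $O(1)$ in $r$ so that no stray $\log r$ term creeps in, and a line of bookkeeping for the cases $k=0$ and $k=n$.
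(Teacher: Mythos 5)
Your argument is exactly the paper's: apply the operator $\int_0^r \frac{dt}{t}\int_{|\zeta|\le t}(\cdot)$ to the identity (\ref{hkexpression}) and invoke the Green--Jensen formula, with the smooth terms giving the characteristic functions and the divisor giving $N_{d_k}(r)$. Your extra care about the hypotheses of Green--Jensen (order zero, finiteness of $\log h_k(0)$, the multiplicities $\mu_k$) only makes explicit what the paper leaves implicit, so this is correct and essentially the same proof.
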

The Pl$\ddot{u}$cker formula implies the following lemma which gives the estimates of $ T_{F_k}(r)$ 
in terms of $T_f(r)$. We use our estimate of the error term. 
\begin{lemma}\label{h3}
For $0 \leq k \leq n-1$ and every $\delta>0$, 
$$ T_{F_k}(r) \leq  (n+2)^3(1+ (2+\delta) c_f) T_f(r) + n(n+1)^2 \delta\log r +O(1)~\|.$$
\end{lemma}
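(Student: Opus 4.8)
\textbf{Proof proposal for Lemma \ref{h3}.}

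The plan is to run the Plücker formula of Lemma \ref{h2} together with the calculus estimate (\ref{cs}) to control each $S_k(r)$, and then solve the resulting recursion for $T_{F_k}(r)$. First I would rewrite the Plücker formula in the convenient form
\begin{equation*}
T_{F_{k+1}}(r) - 2T_{F_k}(r) + T_{F_{k-1}}(r) = S_k(r) - N_{d_k}(r) + O(1) \leq S_k(r) + O(1),
\end{equation*}
so that the second difference of the sequence $T_{F_k}(r)$ is bounded above by $S_k(r)+O(1)$. Next, to bound $S_k(r) = \tfrac12\int_0^{2\pi}\log h_k(re^{i\theta})\,\tfrac{d\theta}{2\pi}$, I would use the concavity of $\log$ to pass to $\tfrac12\log\int_0^{2\pi} h_k(re^{i\theta})\,\tfrac{d\theta}{2\pi} + O(1)$, and then apply (\ref{cs}) to the function $\Gamma = h_k$ (whose associated $T_\Gamma$ is $T_{F_k}$) with $\gamma(r) = \exp((c_f+\epsilon)T_f(r))$. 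Here one must be careful about which height function appears on the right of (\ref{cs}): it will be $T_{F_k}^{(1+\delta)^2}(r)$, not $T_f(r)$, so the estimate is roughly
\begin{equation*}
S_k(r) \leq (1+\delta')(c_f+\epsilon)\,(2+\delta')\,T_f(r)\cdot(\text{small})\ + \ (1+\delta')^2\log^+ T_{F_k}(r) + \delta'\log r + O(1).
\end{equation*}
The appearance of $\log^+ T_{F_k}(r)$ rather than $T_{F_k}(r)$ itself is essential: since $\log^+ T_{F_k}(r)$ is negligible compared to $T_{F_k}(r)$, it will get absorbed in the induction.

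The main work is then the elementary but slightly delicate step of converting a bound on the second difference $\Delta^2 T_{F_k}$ into a bound on $T_{F_k}$ itself. Using $T_{F_{-1}}\equiv 0$ and $T_{F_0}=T_f$, summing the inequality $\Delta^2 T_{F_{k-1}} \leq S_k(r)+O(1)$ telescopes to give $T_{F_k}(r) \leq (k+1)T_f(r) + \sum_{j=1}^{k}(k+1-j)\big(S_j(r)+O(1)\big)$ for $0\le k\le n-1$. Plugging in the bound on each $S_j(r)$ and collecting constants — there are at most $n$ terms, each contributing a factor of order $n^2$ from the combinatorial weights and the exponents $(1+\delta)^2$, $(2+\delta)$ — yields an inequality of the shape
\begin{equation*}
T_{F_k}(r) \leq C_{n}\big(1+(2+\delta)c_f\big)T_f(r) + C_n'\,\delta\log r + \big(\text{terms in }\log^+ T_{F_j}, j\le k\big) + O(1),
\end{equation*}
where one then checks $C_n \leq (n+2)^3$ and $C_n' \leq n(n+1)^2$ by a direct (if tedious) bookkeeping of the weights $k+1-j \leq n$ and the constants from (\ref{cs}). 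The genuinely annoying point — the main obstacle — is handling the $\log^+ T_{F_j}(r)$ terms cleanly: a priori $T_{F_j}$ could be much larger than $T_f$, so one cannot immediately say $\log^+ T_{F_j} = O(\log T_f)$. The standard fix is to argue by induction on $k$: assuming the bound $T_{F_j}(r) \leq (\text{polynomial in }n)\cdot\big(1+(2+\delta)c_f\big)T_f(r) + \cdots$ for all $j<k$, we get $\log^+ T_{F_j}(r) = O(\log^+ T_f(r)) + O(\log r) + O(1)$, and since $\log^+ T_f(r)$ is dominated by $\epsilon$-multiples of $T_f(r)$ while $\log r$ is already allowed on the right-hand side, these terms are absorbed after slightly enlarging $\delta$ and the implied constants. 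Finally one notes that all the estimates from (\ref{cs}) hold only outside an exceptional set $E$ with $\int_E \exp((c_f+\epsilon)T_f(r))\,dr<\infty$, which is exactly the meaning of the symbol $\|$, so the conclusion is stated with that qualifier; since there are finitely many indices $k$, the union of the finitely many exceptional sets is still of this type.
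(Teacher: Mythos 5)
Your proposal follows essentially the same route as the paper's proof: Pl\"ucker's formula (Lemma \ref{h2}) telescoped from $T_{F_{-1}}\equiv 0$, the bound on $S_k$ via concavity of $\log$ and (\ref{cs}) applied to $h_k$ with $\gamma(r)=\exp((c_f+\epsilon)T_f(r))$, and then absorption of the resulting $\log^+ T_{F_j}$ terms, all outside a union of finitely many admissible exceptional sets. The only cosmetic differences are an off-by-one in your telescoped sum (you write $\sum_{j=1}^{k}(k+1-j)S_j$ where the double summation gives $\sum_{j=0}^{k-1}(k-j)S_j$; the total weight $k(k+1)/2$ is the same) and your induction-on-$k$ absorption of the logarithmic terms, where the paper instead sets $T(r)=\sum_{k}T_{F_k}(r)$, bounds each $S_k$ by $\log T(r)$, and absorbs $O(\log T(r))$ into $\tfrac12 T_f(r)$ for $r$ close to $R$ --- both devices accomplish the same thing.
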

\begin{proof} Write $T(r) = \sum_{k=0}^{n-1} T_{F_k}(r).$
Observe that
 $${1\over r}{d\over dr}\left(r{dT_{F_k}(r)\over dr}\right)
= 2 \int_0^{2\pi}  h_k (re^{i\theta}) {d\theta\over 2\pi}.$$
Applying the Calculus Lemma (see (\ref{cs})) with $\gamma(r) =\exp((c_f+\delta) T_f(r))$, we get
$$\int_0^{2\pi}  h_k(re^{i\theta}) {d\theta\over 2\pi} \leq r^{2\delta}  e^{c_f(4+2\delta) T_f(r)} 
T^{(1+2\delta)^2}_{F_k}(r) ~\|.$$
This implies 
\begin{eqnarray}\label{S}
S_k(r) &=& {1\over 2}\int_0^{2\pi} \log h_k(re^{i\theta}) {d\theta\over 2\pi}\nonumber \\
&\leq& {1\over 2}\log \int_0^{2\pi} h_k(re^{i\theta}) {d\theta\over 2\pi}+O(1\nonumber )\\
&\leq&(2+\delta)c_f T_f(r) +{1\over 2} (1+2 \delta)^2 \log T(r)  + \delta \log r~\|.
\end{eqnarray}
From Lemma \ref{h2}, we claim that, for $0 \leq q \leq p$,
$$T_{F_p}(r) + (p-q)T_{F_{q-1}}(r)\leq (p-q+1)T_{F_q}(r) + \sum_{j=q}^{p-1} (p-j) S_j(r) +O(1) .$$
In fact, the claim is true for $p=q$. Assume that the claim is true for $q, q+1, \dots, p$.
If $p=n$, the proof is done. If $p < n$, we proceed, by using Lemma \ref{h2}, 
\begin{eqnarray*}
&~&T_{F_{q-1}}(r) -  T_{F_{q}}(r) + T_{F_{p+1}}(r) - T_{F_{p}}(r)\\
&=&
\sum_{j=q}^p \left(T_{F_{j-1}}(r) -  2T_{F_{j}}(r) + T_{F_{j+1}}(r)\right)
=\sum_{j=q}^{p}  S_j(r) -\sum_{j=q}^p  N_{d_j}(r) +O(1)\\
&\leq&  \sum_{j=q}^{p}  S_j(r) +O(1).
\end{eqnarray*}
So
$$T_{F_{p+1}}(r) + T_{F_{q-1}}(r) \leq   T_{F_{p}}(r) +  T_{F_{q}}(r) + \sum_{j=q}^p  S_j(r) +O(1).$$
Thus
\begin{eqnarray*}
&~&T_{F_{p+1}}(r) + (p+1-q)T_{F_{q-1}}(r) =  T_{F_{p+1}}(r) + T_{F_{q-1}}(r) + (p-q) T_{F_{q-1}}(r)  \\
&\leq & T_{F_{p}}(r) +  T_{F_{q}}(r)+(p-q) T_{F_{q-1}}(r)  + \sum_{j=q}^p  S_j(r) +O(1).
\end{eqnarray*}
On the other hand, from Lemma \ref{h2} again, we have
\begin{eqnarray*}
&~&T_{F_p}(r) - (p-q+1)  T_{F_q}(r) +(p-q) T_{F_{q-1}}(r) \\
&=&\sum_{j=q}^p (p-j) \left(T_{F_{j-1}}(r) -  2T_{F_{j}}(r) +T_{F_{j+1}}(r)\right)
\leq  \sum_{j=q}^{p} (p-j) S_j(r) + O(1).
\end{eqnarray*}
Hence
$$ T_{F_{p}}(r) +  T_{F_{q}}(r)+  (p-q) T_{F_{q-1}}(r) \leq (p-q+2)  T_{F_q}(r) + \sum_{j=q}^{p} (p-j) S_j(r) + O(1).$$
Therefore
$$T_{F_{p+1}}(r) + (p+1-q)T_{F_{q-1}}(r)
\leq (p-q+2)  T_{F_q}(r) + \sum_{j=q}^{p} (p+1-j) S_j(r) +  O(1).$$
This proves our claim. Now take $q=0$ and $p=k$ and notice that $T_{F_{-1}}(r)\equiv 0$, then
$$T_{F_k}(r) \leq (k+1)T_f(r) +  \sum_{j=0}^{k-1} (k-j) S_j(r) +O(1).$$
This, together with (\ref{S}) gives,  for $0 \leq k \leq n$,
\begin{eqnarray*}
&~&T_{F_k}(r) \leq  (k+1) T_f(r)\\
&~& + {1\over 2}k(k+1) \left((2+\delta)c_f T_f(r)  +
(1+2\delta)^2  \log T(r) +\delta\log r+ O(1)\right)~\|.
\end{eqnarray*}
Therefore,
\begin{eqnarray*}
&~& T(r)  \leq  (n+1)^2 T_f(r) \\
&~& +  {1\over 2}n(n+1)^2 \left( (2+\delta) c_f T_f(r)  +
{1\over 2}(1+2\delta)^2 \log T(r) +\delta\log r+ O(1)\right)~\|.
\end{eqnarray*}
Because
${1\over 2}n(n+1)^2(1+2\delta)^2 \log T(r) \leq {1\over 2} T_f(r)$ where $r$ is close enough to $R$, 
 we have
$$T(r)  \leq  (n+2)^3(1+ (2+\delta) c_f) T_f(r) + n(n+1)^2 \delta\log r +O(1)~\|.$$
\end{proof}

\noindent{\bf B. The projective distance}.~For integers $1 \leq q \leq p \leq n+1$,
the {\bf interior product} $\xi \lfloor \alpha \in \bigwedge ^{p-q} {\Bbb C}^{n+1}$
of vectors $\xi \in
\bigwedge ^{p+1} {\Bbb C}^{n+1}$ and $\alpha \in  \bigwedge ^{q+1} ({\Bbb C}^{n+1})^*$
is defined by
$$\beta(\xi \lfloor \alpha) = (\alpha\wedge \beta)(\xi)$$
for any $\beta \in  \bigwedge ^{p-q}( {\Bbb C}^{n+1})^*$.
Let
$$H = \{[x_0: \cdots:x_n] ~|~a_0 x_0 + \cdots + a_n x_n = 0\}$$
 be a hyperplane in ${\Bbb P}^n({\Bbb C})$ with unit normal vector
${\bf a} = (a_0, \cdots,a_n)$. In the rest of this section, we 
 regard ${\bf a}$ as a vector
in $({\Bbb C}^{n+1})^*$ which is defined by ${\bf a}({\bf x})
= a_0x_0 + \cdots + a_nx_n$ for each ${\bf x} =  (x_0, \cdots, x_n)
\in {\Bbb C}^{n+1}$, where $({\Bbb C}^{n+1})^*$ is the dual space of ${\Bbb C}^{n+1}$.
Let $x \in {\Bbb P}(\bigwedge^{k+1} {\Bbb C}^{n+1})$,
the {\bf projective distance} is defined by
\begin{equation}
 \|x; H\| = {\|\xi \lfloor {\bf a} \| \over \|\xi\| \|{\bf a} \|}
\end{equation}
where   $\xi \in \bigwedge^{k+1}{\Bbb C}^{n+1}$ with ${\Bbb P}(\xi) = x$.
Define
\begin{equation}
m_{F_k}(r, H) =
\int_0^{2\pi} \log {1\over \|F_k(re^{i\theta}); H\|} {d\theta\over 2\pi}.
\end{equation}
 We have 
the following weak form of the First Main Theorem for $F_k$.
\begin{theorem}[Weak First Main Theorem]\label{first}
 $$m_{F_k}(r, H) \leq  T_{F_k}(r) + O(1).$$
\end{theorem}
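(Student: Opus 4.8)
\textit{Proof proposal.} The plan is to run the classical First Main Theorem argument for the associated curve $F_k$ and simply discard the nonnegative error terms, so that only the inequality survives. The three ingredients are: the bound $\|F_k(z);H\|\le 1$, the Poincar\'e--Lelong formula for $\log\|\mathbf{F}_k\lfloor\mathbf{a}\|^2$, and the Green--Jensen formula stated above.

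First I would record the elementary estimate $\|\xi\lfloor\mathbf{a}\|\le\|\xi\|\,\|\mathbf{a}\|$ for the interior product (checked on an orthonormal basis of decomposable vectors), which gives $\|F_k(z);H\|\le 1$ for every $z$, hence that
$$g_H(z):=\log\frac{1}{\|F_k(z);H\|^2}$$
is nonnegative on $\bigtriangleup(R)$ and $m_{F_k}(r,H)=\tfrac12\int_0^{2\pi}g_H(re^{i\theta})\tfrac{d\theta}{2\pi}$. Since $\|F_k(z);H\|$ is independent of the chosen lift, I may compute it with $\mathbf{F}_k=\mathbf{f}\wedge\cdots\wedge\mathbf{f}^{(k)}$ itself (the common zero factors cancel between numerator and denominator), so, normalizing $\|\mathbf{a}\|=1$,
$$g_H(z)=\log\|\mathbf{F}_k(z)\|^2-\log\|\mathbf{F}_k(z)\lfloor\mathbf{a}\|^2.$$

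Next I would differentiate. By Poincar\'e--Lelong, $dd^c\log\|\mathbf{F}_k\|^2=\Omega_k+D$ with $D\ge 0$ the zero divisor of the holomorphic section $\mathbf{F}_k$ (the stationary points, with the multiplicities $m_k$), while $dd^c\log\|\mathbf{F}_k\lfloor\mathbf{a}\|^2=D_H'+\Theta$ with $D_H'\ge 0$ the zero divisor of the holomorphic section $\mathbf{F}_k\lfloor\mathbf{a}$ and $\Theta\ge 0$ a smooth nonnegative form. The crucial observation is $D_H'\ge D$: near a zero $z_0$ of $\mathbf{F}_k$ of order $m$ one writes $\mathbf{F}_k=(z-z_0)^m\widetilde{\mathbf{F}}_k$ with $\widetilde{\mathbf{F}}_k(z_0)\ne 0$, whence $\mathbf{F}_k\lfloor\mathbf{a}=(z-z_0)^m(\widetilde{\mathbf{F}}_k\lfloor\mathbf{a})$ vanishes to order at least $m$. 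Therefore
$$dd^c g_H=\Omega_k+D-D_H'-\Theta\le\Omega_k$$
as currents on $\bigtriangleup(R)$. Applying the Green--Jensen formula to $g_H$ (which has $dd^c$ of order zero as a difference of plurisubharmonic functions, and $g_H(0)$ finite since $F_k(0)\notin H$ under the standing linear non-degeneracy hypothesis — at worst an $O(1)$ is affected) gives
$$m_{F_k}(r,H)=\tfrac12\int_0^{2\pi}g_H(re^{i\theta})\tfrac{d\theta}{2\pi}=\int_0^r\frac{dt}{t}\int_{|z|<t}dd^c g_H+O(1)\le\int_0^r\frac{dt}{t}\int_{|z|<t}\Omega_k+O(1)=T_{F_k}(r)+O(1),$$
which is the assertion. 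This is literally the full First Main Theorem $m_{F_k}(r,H)+N_{F_k}(r,H)=T_{F_k}(r)-\int_0^r\tfrac{dt}{t}\int_{|z|<t}\Theta+O(1)$ with the two nonnegative terms thrown away, which is why only the weak form is claimed.

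I expect the one genuinely delicate point to be the divisor bookkeeping in the $dd^c$ computation: one must verify that the zero divisor $D$ of $\mathbf{F}_k$ is dominated by that of $\mathbf{F}_k\lfloor\mathbf{a}$, so that the would-be contribution of the stationary points to $\int\log\|\mathbf{F}_k\|^2$ cancels and does not spoil the clean bound by $T_{F_k}(r)$. One can avoid this entirely by carrying out the whole argument with a reduced representation of $F_k$ from the start, at the price of slightly less transparent formulas for $\Omega_k$.
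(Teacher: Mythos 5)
Your argument is correct and is essentially the paper's proof: both hinge on Poincar\'e--Lelong applied to $\log\|\cdot\lfloor\mathbf{a}\|^2$ together with the Green--Jensen formula, the inequality coming from discarding the nonnegative characteristic term of $F_k\lfloor\mathbf{a}$ (your $\Theta$) and the nonnegative counting term (your $D_H'-D$, which is the divisor $\nu_G$ in the paper). The only difference is cosmetic: the paper works with a reduced representation $\mathbf{f}_k$ of $F_k$ from the outset, which eliminates the stationary-divisor bookkeeping you flag at the end --- exactly the alternative you yourself propose.
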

\begin{proof} Let ${\bf f}_k:  \bigtriangleup(R)\rightarrow \bigwedge^{k+1} {\Bbb C}^{n+1}$ be a reduced representation of $F_k$,
and we consider the holomorphic map
$$F_k\lfloor {\bf a}: \bigtriangleup(R) \rightarrow {\Bbb P}(\bigwedge^{k} {\Bbb C}^{n+1})$$ which is given by 
$F_k\lfloor {\bf a}:= {\Bbb P}(G)$ where 
$G= {\bf f}_k \lfloor {\bf a}$. Note that  $G$ is a representation of the holomorphic map $F_k\lfloor {\bf a}$, but is not reduced.
We denote by $\nu_G$ the  divisor of $G$ on $\bigtriangleup(R)$, and $N_{G}(r, 0)$ the counting function 
associated to $\nu_G$ (which is independent of the choices of  the reduced representation of $F_k$).
We have
$$ (F_k\lfloor {\bf a})^*\omega_k+ \nu_G = dd^c\log \|G\|^2.$$
Applying the integral operator
$$\int_0^r {dt\over t} \int_{|\zeta|\leq t}$$
to the above identity and  using the Green-Jensen's formula, we get
\begin{eqnarray*}
T_{F_k\lfloor {\bf a}}(r) +  N_G(r, 0)&=&\int_0^{2\pi} \log \|G(re^{i\theta})\|{d\theta\over 2\pi}  +O(1) \\
&=& \int_0^{2\pi} \log \|{\bf f}_k \lfloor {\bf a} \| (re^{i\theta}){d\theta\over 2\pi}+O(1).
\end{eqnarray*}
On the other hand, from the definition (notice that ${\bf f}_k$ is a  reduced representation of $F_k$),
$$T_{F_k}(r) = \int_0^{2\pi} \log \|{\bf f}_k\| (re^{i\theta}){d\theta\over 2\pi}  +O(1).$$
Hence, from the definition of $m_{F_k}(r, H)$,
\begin{eqnarray*}
&~&T_{F_k\lfloor {\bf a}}(r) +  N_G(r, 0) + m_{F_k}(r, H) \\
&=& \int_0^{2\pi} \log \|{\bf f}_k \lfloor {\bf a} \| (re^{i\theta}){d\theta\over 2\pi}+O(1) + \int_0^{2\pi} \log {\|{\bf f}_k\| \|{\bf a}\|\over  \|{\bf f}_k \lfloor {\bf a} \|}(re^{i\theta}){d\theta\over 2\pi}\\
 &=& \int_0^{2\pi} \log \|{\bf f}_k\| (re^{i\theta}){d\theta\over 2\pi}  +O(1) 
 =   T_{F_k}(r)+O(1).
\end{eqnarray*}
\end{proof}
 
 We shall need the following product to sum estimate. It is an extension
of the estimate of the geometric mean by the arithmetic mean. \begin{lemma}
[See  Theorem 3.5.7 in \cite{Ru}]\label{product3}
 Let  $H_1, \dots, H_q$ (or ${\bf a}_1, \dots, {\bf a}_q$)
 be hyperplanes in ${\Bbb P}^n({\Bbb C})$ in
general position.
Let  $k \in {\Bbb Z}[0,n-1]$ with $n-k \leq q$. Then there exists a
constant $c_k > 0$ such that for every $0< \lambda < 1$ and
  $x \in {\Bbb P}(\bigwedge ^k {\Bbb C}^{n+1})$ with $x \not\subset H_j, 1 \leq j \leq q$ and
$y \in {\Bbb P}(\bigwedge ^{k+1} {\Bbb C}^{n+1})$ we have
$$ \prod_{j=1}^q {\|y; H_j\|^2\over \|x; H_j\|^{2-2\lambda}}
\leq c_k \left( \sum_{j=1}^q {\|y; H_j\|^2\over \|x;  H_j\|^{2-2\lambda}}\right)^{n-k}.$$
\end{lemma}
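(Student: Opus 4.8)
The plan is to reduce the inequality to a purely algebraic statement about Plücker coordinates and then invoke the general position hypothesis via a Cramer/determinant argument. First I would fix a point $z$ at which $F_k(z) \not\subset H_j$ for all $j$, choose reduced representations ${\bf f}_k$ of $F_k$ and ${\bf f}_{k+1}$ of $F_{k+1}$, and rewrite both sides in terms of the interior products: the numerator $\|y;H_j\|$ with $y = F_{k+1}(z)$ equals $\|{\bf f}_{k+1} \lfloor {\bf a}_j\| / (\|{\bf f}_{k+1}\|\,\|{\bf a}_j\|)$, and similarly for $\|x;H_j\|$ with $x = F_k(z)$. After clearing the common normalizing factors $\|{\bf f}_k\|$, $\|{\bf f}_{k+1}\|$, $\|{\bf a}_j\|$ (which contribute only a multiplicative constant absorbable into $c_k$), the claim becomes an inequality between a product and the $(n-k)$-th power of a sum of the quantities $\|{\bf f}_{k+1} \lfloor {\bf a}_j\|^2 / \|{\bf f}_k \lfloor {\bf a}_j\|^{2-2\lambda}$.

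The key step is the following linear-algebra observation: since $H_1, \dots, H_q$ are in general position, any $n-k$ of the covectors ${\bf a}_j$ restricted to (a complement of) the $(k+1)$-plane represented by $\xi$ with ${\Bbb P}(\xi) = x$ span the relevant space, so one can express a fixed "unit" quantity as a bounded linear combination of at most $n-k$ of the $\|y;H_j\|/\|x;H_j\|$-type terms. Concretely, I would estimate $\|y;H_j\|$ from below by comparing it to $\|x;H_j\|$: the point is that $\xi \lfloor {\bf a}_j$ (with $\xi$ representing $y\in\bigwedge^{k+1}$) and $x\in\bigwedge^{k+1}$ differ by a controlled amount, and a general-position subset of the indices lets one bound the geometric mean over all $q$ indices by the geometric mean over a well-chosen $(n-k)$-subset, which in turn is bounded by the arithmetic mean of that subset (AM–GM), hence by the full sum. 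Raising to the power $n-k$ accounts for the size of the subset. The exponent $2-2\lambda$ rather than $2$ in the denominator is exactly what is needed so that the terms $\|x;H_j\|$ in the denominator (which can be small, i.e. $x$ near $H_j$) do not blow up the estimate; the factor $\lambda$ provides the slack, and one checks that the constant $c_k$ produced depends only on $k$, $n$, and the configuration $\{H_j\}$, uniformly for $0<\lambda<1$.

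I would organize the argument by first handling the case $q = n-k$ (where the general position hypothesis makes the ${\bf a}_j$ restrict to a basis, so the estimate is a direct Cramer's-rule bound with no power needed beyond the $(n-k)$-fold product itself), and then deducing the general $q \geq n-k$ case by selecting, for each $z$, the $n-k$ indices $j$ for which $\|x;H_j\|$ is largest (equivalently, $f$ is farthest from $H_j$); for the remaining indices the ratio $\|y;H_j\|^2/\|x;H_j\|^{2-2\lambda}$ is then controlled by the selected ones up to a constant, using again general position. The main obstacle I anticipate is making the uniformity in $\lambda$ and the independence of $c_k$ from $x,y$ completely rigorous: one must track that the minors arising from Cramer's rule are bounded below by a positive constant depending only on the configuration (this is where "general position" is used quantitatively), and that the selection-of-largest-indices step does not secretly introduce a $\lambda$-dependent or $x$-dependent constant. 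This is routine but bookkeeping-heavy; the cited Theorem 3.5.7 in \cite{Ru} presumably carries it out in detail, so I would state the lemma, indicate the reduction to the product–sum form and the AM–GM plus general-position mechanism, and refer to \cite{Ru} for the remaining estimates.
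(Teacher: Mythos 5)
A preliminary remark: the paper does not prove this lemma at all --- it is quoted directly from Theorem 3.5.7 of \cite{Ru} --- so the relevant comparison is with the standard argument given there. That argument runs as follows: because the $H_j$ are in general position and the relevant set of (decomposable) points $x$ is compact, there is a constant $\delta>0$, depending only on the configuration, such that for every $x$ at most $n-k$ of the distances $\|x; H_j\|$ can be smaller than $\delta$ (an intersection--dimension count plus compactness of the Grassmannian; this count is exactly what produces the exponent $n-k$). One then \emph{retains} precisely those at most $n-k$ indices with $\|x;H_j\|<\delta$, bounds each retained factor by the full sum (only nonnegativity is needed, not AM--GM), and bounds every discarded factor by $\delta^{-2}$ using $\|y;H_j\|\le 1$ and $0<2-2\lambda<2$; this is also where the uniformity in $\lambda$ and in $x,y$ comes from.

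Your reduction has this selection backwards, and that step would fail as written: you propose to keep the $n-k$ indices where $\|x;H_j\|$ is \emph{largest} and to control the remaining ratios ``by the selected ones up to a constant, using general position.'' But the dangerous factors are exactly those with small $\|x;H_j\|$: as $x$ approaches one of the $H_j$ the corresponding ratio blows up while the ratios at the distant hyperplanes stay bounded, so no such domination is possible; general position is used to bound the \emph{discarded} factors by an absolute constant, not to compare bad factors with good ones. Relatedly, there is no estimating $\|y;H_j\|$ ``from below by comparing it to $\|x;H_j\|$'': the lemma assumes no relation between $x$ and $y$ (and in the application $\|F_{k+1};H\|$ can vanish where $\|F_k;H\|$ does not); the proof only ever uses $\|y;H_j\|\le 1$. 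Two smaller points: clearing $\|{\bf f}_k\|$ and $\|{\bf f}_{k+1}\|$ does not produce ``a multiplicative constant'' (these are unbounded functions of $z$), though this is moot since the lemma is a pointwise statement about projective points and one may simply take unit representatives --- no $z$ or $F_k$ occurs in it; and the case $q=n-k$ is trivial with $c_k=1$, since each nonnegative term is at most the sum, so no Cramer's-rule input is needed there. All of the content of the lemma lies in discarding the other $q-(n-k)$ factors, i.e.\ precisely in the step you have reversed.
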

\noindent{\bf C.  The Ahlfors' estimate}.
Let $\phi_k(H) = \|F_k; H\|^2$.  Define
\begin{equation}\label{h} h_k(H) = {\phi_{k-1}(H)\phi_{k+1}(H)\over \phi_k^2(H)} \Omega_k.
\end{equation}
The function $\phi_k(H)$ is defined out of the stationary points, however the analysis near those points shows that $\phi_k(H)$ can be extended smoothly at those points \cite{Sh}. The key of this Ahlfors' approach  is the following so-called Ahlfors' estimate. We include a proof here.

\begin{theorem}[Ahlfors' estimate (\cite{Ru} or  \cite{SW}]\label{alf} 
 Let $H$ be a
hyperplane in ${\Bbb P}^n({\Bbb C})$.
 Then for any $0 < \lambda < 1$,
 we have
\[ \int_0^r \int_{|z| < t}  {\phi _{k+1}(H) \over
\phi_k(H)^{1 - \lambda}} \Omega_k{dt\over t}
\leq {1 \over \lambda^2}(8T_{F_k}(r) + O(1)).\]
\end{theorem}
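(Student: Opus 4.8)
The plan is to follow the classical Ahlfors calculus argument, adapted to the disc $\bigtriangleup(R)$. First I would set up the relevant quantities: write $\Omega_k = \frac{\sqrt{-1}}{2\pi} h_k \, dz \wedge d\bar z$ and consider the integral
\[
A_k(r,H) := \int_0^r \frac{dt}{t} \int_{|z|<t} \frac{\phi_{k+1}(H)}{\phi_k(H)^{1-\lambda}}\, \Omega_k.
\]
The first key step is a Poincaré--Lelong / curvature identity for $\log \phi_k(H)$: since $\phi_k(H) = \|F_k;H\|^2 = \|{\bf f}_k \lfloor {\bf a}\|^2 / (\|{\bf f}_k\|^2 \|{\bf a}\|^2)$, one computes (away from the stationary points, where everything extends smoothly by \cite{Sh}) that
\[
dd^c \log \phi_k(H) = \Omega_k - \frac{\phi_{k-1}(H)\phi_{k+1}(H)}{\phi_k(H)^2}\,\Omega_k + (\text{effective divisor}) \ge \Omega_k - h_k(H),
\]
using Lemma \ref{h1}-type identities for the wedge products. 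I would be careful to record the nonnegative divisor contribution and discard it, keeping the inequality in the direction needed.

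The second step is the analytic heart: estimate $\int_{|z|<t}\frac{\phi_{k+1}(H)}{\phi_k(H)^{1-\lambda}}\Omega_k$ using the concavity of $\log$ together with Jensen's inequality and the boundedness $\phi_k(H) \le 1$. One writes $\frac{\phi_{k+1}(H)}{\phi_k(H)^{1-\lambda}}\Omega_k = \phi_k(H)^{\lambda}\, h_k(H)$ and, since $\phi_k(H)^\lambda \le 1$, pointwise dominates this by $h_k(H)$; but to get the $1/\lambda^2$ one actually exploits the factor $\phi_k(H)^\lambda$ more cleverly, using the elementary inequality $\int_0^{2\pi} \phi_k^\lambda \, d\theta$-type bounds and the identity $\frac{1}{r}\frac{d}{dr}\bigl(r \frac{d}{dr}\bigl(\text{mass integral}\bigr)\bigr)$. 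Concretely, set $g(t)$ to be the $\log$-averaged boundary integral of $\phi_k(H)$; the second-order logarithmic derivative of a height-type function equals $2r$ times the circular mean of the density, and a double integration by parts of the inequality $dd^c\log\phi_k(H) \ge \Omega_k - \phi_k(H)^\lambda h_k(H)$ against $\int_0^r \frac{dt}{t}\int_{|z|<t}$, combined with the Green--Jensen formula, yields
\[
\lambda \cdot A_k(r,H) \le -\tfrac{1}{2}\int_0^{2\pi}\log\phi_k(H)(re^{i\theta})\frac{d\theta}{2\pi} + T_{F_k}(r) + O(1),
\]
after using the Weak First Main Theorem (Theorem \ref{first}) to bound $m_{F_k}(r,H) = -\tfrac12\int \log\phi_k(H)\,\frac{d\theta}{2\pi}$ by $T_{F_k}(r)+O(1)$ — this gives a bound of shape $\le \frac{C}{\lambda}(T_{F_k}(r)+O(1))$, and a second application of the same trick (differentiating in $\lambda$, or iterating) upgrades the $1/\lambda$ to $1/\lambda^2$ with the explicit constant $8$.

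I expect the main obstacle to be the precise bookkeeping that produces the sharp constant $8$ and the $1/\lambda^2$ (as opposed to merely $1/\lambda$ or a worse constant): this requires carefully choosing how to split $\phi_k(H)^\lambda = \phi_k(H)^{\lambda} \cdot 1$ versus using the two-variable identity relating $\phi_{k\pm 1}$, and handling the cross terms from the $\Omega_{k-1}$ contribution. The standard route is to introduce the auxiliary function $u = \phi_k(H)^{-(1-\lambda)}$, observe $dd^c u$ has a sign determined by the curvature identity plus a term $\lambda(1-\lambda)u\,\frac{|\partial \log\phi_k(H)|^2}{\cdots}$, integrate, and bound the gradient term from below — the appearance of $\lambda(1-\lambda) \ge \lambda/2$ for $\lambda$ small, or rather the square completion, is what forces the $\lambda^2$. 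The one genuinely new point relative to the $R=\infty$ case is purely formal: every integral $\int_0^r\frac{dt}{t}\int_{|z|<t}$ and Green--Jensen application is valid verbatim on $\bigtriangleup(R)$ for $0<r<R$, so no growth hypothesis on $f$ enters here; the growth index $c_f$ only appears later, when this estimate is fed into the calculus lemma (\ref{cs}). I would therefore present the proof exactly as in \cite{Ru} or \cite{SW}, noting this observation, and check only that no step secretly used $R=\infty$.
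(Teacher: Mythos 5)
Your fallback position --- present the proof exactly as in \cite{Ru} or \cite{SW} and observe that nothing in it uses $R=\infty$ or any growth hypothesis, the growth index entering only later through the calculus lemma (\ref{cs}) --- is precisely what the paper does, and that observation is the only genuinely new point. But the reconstruction you actually sketch has a real gap. The inequality you propose to integrate, of the shape $dd^c\log\phi_k(H)\ge \Omega_k-\frac{\phi_{k+1}(H)}{\phi_k(H)^{1-\lambda}}\Omega_k$, runs in the wrong direction for the statement: applying $\int_0^r\frac{dt}{t}\int_{|z|<t}$ and Green--Jensen, and using $\phi_k(H)\le 1$, it yields a \emph{lower} bound $\int_0^r\frac{dt}{t}\int_{|z|<t}\frac{\phi_{k+1}(H)}{\phi_k(H)^{1-\lambda}}\Omega_k\ \ge\ T_{F_k}(r)-O(1)$ (this is what one feeds into the Second Main Theorem), not the upper bound claimed here. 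Your intermediate inequality $\lambda\,A_k(r,H)\le-\frac12\int\log\phi_k(H)\,\frac{d\theta}{2\pi}+T_{F_k}(r)+O(1)$ does not follow from it, and the step ``iterate, or differentiate in $\lambda$, to upgrade $1/\lambda$ to $1/\lambda^2$'' is not an argument. (Also, your pointwise identity $\frac{\phi_{k+1}(H)}{\phi_k(H)^{1-\lambda}}\Omega_k=\phi_k(H)^{\lambda}h_k(H)$ is inconsistent with the paper's definition (\ref{h}), being off by a factor $\phi_{k-1}(H)/\phi_k(H)$.)

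The actual mechanism --- and the sole source of both the $\lambda^2$ and the constant $8$ --- is the pointwise Hessian estimate for the \emph{bounded} auxiliary function $\log\bigl(1+\phi_k(H)^{\lambda}\bigr)$, namely Lemma \ref{alfl} (Lemma A3.5.10 of \cite{Ru}), which gives $\frac{\lambda^2}{4}\frac{\phi_{k+1}(H)}{\phi_k(H)^{1-\lambda}}\Omega_k\le dd^c\log\bigl(1+\phi_k(H)^{\lambda}\bigr)+\lambda(1+\lambda)\Omega_k$. Integrating, Green--Jensen turns the $dd^c$ term into $\frac12\int_0^{2\pi}\log\bigl(1+\phi_k(H)^{\lambda}\bigr)\frac{d\theta}{2\pi}+O(1)\le\frac12\log 2+O(1)$ because $0\le\phi_k(H)\le1$, while the second term contributes $\lambda(1+\lambda)T_{F_k}(r)\le 2T_{F_k}(r)$; dividing by $\lambda^2/4$ gives the stated bound. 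No use of the weak First Main Theorem is needed. Your proposed substitute $u=\phi_k(H)^{-(1-\lambda)}$ cannot play this role: it blows up at the zeros of $\phi_k(H)$, so its circle averages are not bounded and Green--Jensen does not produce an $O(1)$ boundary term. In short, the disc-versus-plane point you make is correct and matches the paper, but the analytic core of the estimate is missing from your sketch and would have to be supplied by Lemma \ref{alfl} (or an equivalent computation), as the paper does.
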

To prove Ahlfors' estimate, the following lemma plays a crucial role  (see \cite{Sh}, \cite{Ru} or  \cite{SW}). The proof of the lemma is based on 
a standard but lengthy computation. For the details of the proof, see Lemma A3.5.10 in \cite{Ru}.
\begin{lemma}[Lemma A3.5.10 in \cite{Ru}]\label{alfl} Let $H$ be a  hyperplane  ${\Bbb P}^n({\Bbb C})$ and 
 $\lambda$ be a constant with $0 < \lambda < 1$. Then,  for
$0 \leq k \leq n$, the following inequality holds on 
$\bigtriangleup(R) - \{z~|~ \phi_k(H)(z) = 0\}$
$${\lambda^2\over 4}{\phi_{k+1}(H)\over \phi_k^{1-\lambda}(H)}\Omega_k
- {\lambda(1+\lambda)} \Omega_k \leq dd^c \log(1+\phi_k(H)^{\lambda}).$$
\end{lemma}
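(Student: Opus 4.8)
The asserted inequality is a pointwise bound for $(1,1)$-forms, so I would verify it at an arbitrary point $z_0\in\bigtriangleup(R)$ with $\phi_k(H)(z_0)\neq 0$ (the isolated zeros of the associated curves being handled by passing to local reduced representations, as usual). Since $\Omega_k$, $\phi_k(H)$ and $\phi_{k+1}(H)$ are invariant under $U(n+1)$ acting on $\C^{n+1}$ and under rescaling the reduced representation ${\bf f}$, I would first normalize the osculating flag of $f$ at $z_0$: after a unitary change of coordinates one may take ${\bf F}_j(z_0)$ to be a positive multiple of $e_0\wedge\cdots\wedge e_j$ for $0\le j\le n$. Every quantity in the inequality then becomes an explicit expression in finitely many Taylor coefficients of ${\bf f}$ at $z_0$ and in the covector ${\bf a}$, and the inequality of forms reduces to one scalar inequality for the coefficients of $\frac{\sqrt{-1}}{2\pi}dz\wedge d\bar z$.

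Write $\varphi=\phi_k(H)$ and $v=\log\varphi$. The chain rule $dd^c(g\circ v)=g'(v)\,dd^c v+g''(v)\,dv\wedge d^c v$ applied to $g(t)=\log(1+e^{\lambda t})$ gives
$$dd^c\log(1+\varphi^\lambda)=\frac{\lambda\varphi^\lambda}{1+\varphi^\lambda}\,dd^c v+\frac{\lambda^2\varphi^\lambda}{(1+\varphi^\lambda)^2}\,dv\wedge d^c v.$$
Because the contraction obeys $\|\xi\lfloor{\bf a}\|\le\|\xi\|\,\|{\bf a}\|$, we have $\varphi\le 1$; hence $s:=\varphi^\lambda\in(0,1]$, $\frac{\lambda s}{1+s}\le\lambda$ and $\frac{\lambda^2 s}{(1+s)^2}\le\frac{\lambda^2}{4}$, and these two estimates are exactly what will output the constants $\lambda(1+\lambda)$ and $\frac{\lambda^2}{4}$. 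Writing $\varphi=\|G_k\|^2/\|{\bf F}_k\|^2$ with $G_k:={\bf F}_k\lfloor{\bf a}$ (so $G_k(z_0)\neq 0$), and using $dd^c\log\|{\bf F}_k\|^2=\Omega_k$, we get $dd^c v=dd^c\log\|G_k\|^2-\Omega_k$, where $dd^c\log\|G_k\|^2=\frac{\sqrt{-1}}{2\pi}\frac{\|G_k\|^2\|G_k'\|^2-|\langle G_k',G_k\rangle|^2}{\|G_k\|^4}\,dz\wedge d\bar z\ge 0$.

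The crux is the "standard but lengthy computation": a pointwise lower bound for $dd^c\log\|G_k\|^2$ in terms of $\Omega_k$, the ratio $\phi_{k+1}(H)/\phi_k(H)$ and the gradient form $dv\wedge d^c v$. Here one uses $G_k'={\bf F}_k'\lfloor{\bf a}$ with ${\bf F}_k'={\bf F}_{k-1}\wedge{\bf f}^{(k+1)}$, together with the Pl\"ucker/Lagrange identities --- in particular the one behind Lemma~\ref{h1}, namely $\|{\bf F}_k\|^2\|{\bf F}_k'\|^2-|\langle{\bf F}_k',{\bf F}_k\rangle|^2=\|{\bf F}_{k-1}\|^2\|{\bf F}_{k+1}\|^2$ --- to evaluate $\|G_k'\|^2$ and $\langle G_k',G_k\rangle$ in the normalized frame and to compare them with $\|{\bf F}_{k+1}\lfloor{\bf a}\|^2=\phi_{k+1}(H)\|{\bf F}_{k+1}\|^2$ and $\|{\bf F}_{k-1}\lfloor{\bf a}\|^2=\phi_{k-1}(H)\|{\bf F}_{k-1}\|^2$ (the bound $\phi_{k-1}(H)\le 1$ serving to discard the lower-order contact function). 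This produces an inequality bounding $\frac{\phi_{k+1}(H)}{\phi_k(H)}\Omega_k$ from above by a combination of $dd^c\log\|G_k\|^2$, $dv\wedge d^c v$ and $\Omega_k$ whose numerical coefficients are balanced against the weights $\frac{\lambda s}{1+s}$ and $\frac{\lambda^2 s}{(1+s)^2}$ in the chain-rule identity. I expect this step --- getting the comparison in the right direction and, above all, with the constants tuned correctly --- to be the main obstacle; the remainder is bookkeeping.

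Finally I would substitute the key estimate into the chain-rule identity. The $dd^c v$ term contributes at least $\frac{\lambda s}{1+s}\frac{\phi_{k+1}(H)}{\phi_k(H)}\Omega_k$; since $0<\lambda\le 1$ and $0<s\le 1$ one has $\frac{\lambda s}{1+s}\ge\frac{\lambda s}{2}\ge\frac{\lambda^2 s}{4}$, so this is at least $\frac{\lambda^2}{4}\,s\,\frac{\phi_{k+1}(H)}{\phi_k(H)}\Omega_k=\frac{\lambda^2}{4}\frac{\phi_{k+1}(H)}{\phi_k(H)^{1-\lambda}}\Omega_k$. Every $\Omega_k$-term carries a coefficient at most $\frac{\lambda s}{1+s}\le\lambda$ and so is bounded below by $-\lambda(1+\lambda)\Omega_k$. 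The cross-terms in $dv$ are absorbed by retaining a positive fraction of $dv\wedge d^c v$ in the key estimate and using $2|X\wedge Y|\le\epsilon\,X\wedge X+\epsilon^{-1}\,Y\wedge Y$ for positive semidefinite $(1,1)$-forms, the leftover nonnegative multiple of $dv\wedge d^c v$ being discarded. Collecting the terms gives $\frac{\lambda^2}{4}\frac{\phi_{k+1}(H)}{\phi_k(H)^{1-\lambda}}\Omega_k-\lambda(1+\lambda)\Omega_k\le dd^c\log(1+\phi_k(H)^\lambda)$, as claimed.
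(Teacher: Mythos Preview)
The paper itself does not prove this lemma; it simply refers to Lemma~A3.5.10 in \cite{Ru}, calling the argument ``a standard but lengthy computation''. Your outline is exactly that standard route --- chain rule on $\log(1+\phi_k^\lambda)$, then an explicit Pl\"ucker/Lagrange computation of $dd^c\log\|G_k\|^2$ via $G_k'={\bf F}_k'\lfloor{\bf a}$ --- so at the level of approach there is nothing to compare: you and the paper both defer the actual work to the reference.

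What you have written, however, is a strategy rather than a proof, and the final paragraph is not internally consistent with the one before it. In your third paragraph you promise an estimate of the shape
\[
\frac{\phi_{k+1}}{\phi_k}\,\Omega_k\ \le\ A\cdot dd^c\log\|G_k\|^2+B\cdot dv\wedge d^c v+C\cdot\Omega_k
\]
with unspecified $A,B,C$; in the fourth you silently replace this by the much stronger pointwise bound $dd^c v\ge\frac{\phi_{k+1}}{\phi_k}\Omega_k$ (that is what ``the $dd^c v$ term contributes at least $\frac{\lambda s}{1+s}\frac{\phi_{k+1}}{\phi_k}\Omega_k$'' means), which neither follows from nor is equivalent to the promised estimate. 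The actual computation in \cite{Ru} (or Cowen--Griffiths, Stoll--Wong, Shabat) yields a precise identity rather than an inequality with free parameters, and once it is in hand the constants $\frac{\lambda^2}{4}$ and $\lambda(1+\lambda)$ fall out directly, with no need for the $\epsilon$-absorption of cross terms you sketch. Until you carry out that identity the proposal remains a plausible outline with the load-bearing step missing.
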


We now prove Theorem \ref{alf} (Ahlfors' Estimate).
\begin{proof}
 By Lemma \ref{alfl},
$$dd^c \log(1+\phi_k(H)^{\lambda}) \ge{\lambda^2\over 4}{\phi_{k+1}(H)\over \phi_k^{1-\lambda}(H)}\Omega_k
- {\lambda(1+\lambda)} \Omega_k.$$
Thus
\begin{equation}\label{phi}{\lambda^2\over 4}{\phi_{k+1}(H)\over \phi_k^{1-\lambda}(H)}\Omega_k
\leq dd^c \log(1+\phi_k(H)^{\lambda})
+ \lambda(1+\lambda) \Omega_k.
\end{equation}
By the Green-Jensen's formula,
\begin{eqnarray*}
\lefteqn{\int_0^r {dt\over t} \int_{|z| \leq t} dd^c \log(1+\phi_k(H)^{\lambda})}\\
&=& {1\over 2} \int_0^{2\pi} \log (1+  \phi_k(H)^{\lambda}){d\theta\over
2\pi} +O(1)
\end{eqnarray*}
This, together with (\ref{phi})
implies that
\begin{eqnarray*}
&&{\lambda^2\over 4} \int_0^r {dt\over t} \int_{|z| \leq t}
{\phi_{k+1}(H)\over \phi_k^{1-\lambda}(H)}\Omega_k \\
&&\leq
\int_0^r {dt\over t} \int_{|z| \leq t} dd^c \log(1+\phi_k(H)^{\lambda})
+  \lambda(1+\lambda) T_{F_k}(r)\\
&~&=  {1\over 2} \int_0^{2\pi}  \log (1+  \phi_k(H)^{\lambda}){d\theta\over
2\pi}  +  \lambda(1+\lambda) T_{F_k}(r) +O(1) \\
&& \leq  \lambda(1+\lambda) T_{F_k}(r) + {1\over 2} \log 2+O(1)
\leq 2T_{F_k}(r) +O(1),
\end{eqnarray*}
using  $0 \leq  \phi_k(H) \leq 1$.
\end{proof}

\noindent{\bf D.  A general theorem}.  We prove the following general version of H. Cartan's theorem.
\begin{theorem}[A General Form of the SMT]\label{gen} 
$f:   \bigtriangleup(R) \rightarrow {\Bbb  P}^n({\Bbb C})$ be a linearly non-degenerate
 holomorphic curve (i.e. its image is not
 contained in any proper subspace of  ${\Bbb  P}^n({\Bbb C})$) with $c_f<\infty$, where $c_f=c_{f, \omega_{FS}}$ and $0<R\leq \infty$. 
   Let $H_1,...,H_q$  $($or linear forms ${\bf a}_1, \dots, {\bf a}_q$$)$ be arbitrary
 hyperplanes in ${\Bbb P}^n({\Bbb C})$. Then, for any $\epsilon > 0$, the inequality
\begin{eqnarray*}
&~&\int_0^{2\pi}\max_K\sum_{j \in K}
\log{1\over \|f(re^{i\theta}); H_j\|} {d\theta\over 2\pi}  + N_{W}(r, 0)\\
&\leq& (n+1)T_f(r) + {n(n+1)\over 2} (1+\epsilon)(c_f+\epsilon) T_f(r)\\
&~& + O(\log T_f(r)) +{n(n+1)\over 2}\epsilon \log r~\|,
\end{eqnarray*}
where 
 the max is taken over all subsets $K$ of $\{1,\dots,q\}$ such that the 
linear forms ${\bf a}_j, j \in K$, 
are linearly independent.
\end{theorem}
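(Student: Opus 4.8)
The plan is to follow Ahlfors' proof of H.~Cartan's Second Main Theorem, organized around the associated curves $F_0,\dots,F_n$, but to replace the classical calculus (logarithmic derivative) step by the growth estimate (\ref{cs}) applied with $\gamma(r)=\exp((c_f+\epsilon)T_f(r))$ and $\delta=2\epsilon$. First I would fix a reduced representation ${\bf f}$ of $f$, so that $W={\bf F}_n={\bf f}\wedge{\bf f}'\wedge\cdots\wedge{\bf f}^{(n)}$ represents the Wronskian of $f$. Applying the Green--Jensen formula to $dd^c[\log|W|^2]$ (Poincar\'e--Lelong) gives $N_W(r,0)=\int_0^{2\pi}\log|W(re^{i\theta})|\,\frac{d\theta}{2\pi}+O(1)$, so the quantity to bound becomes
\[
\int_0^{2\pi}\Bigl(\max_{K}\sum_{j\in K}\log\frac{1}{\|f(re^{i\theta});H_j\|}+\log|W(re^{i\theta})|\Bigr)\frac{d\theta}{2\pi}+O(1).
\]

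Fix once and for all $\lambda\in(0,1)$, say $\lambda=\tfrac12$; it will enter only through multiplicative constants. The core input is the pointwise algebraic inequality at the heart of Ahlfors' theory: there are a constant $C=C(n,\lambda,\{H_j\})>0$ and positive weights $m_0,\dots,m_{n-1}$ with $\sum_{k=0}^{n-1}m_k=\frac{n(n+1)}{2}$ such that, off a discrete subset of $\bigtriangleup(R)$,
\[
\max_{K}\sum_{j\in K}\log\frac{1}{\|f;H_j\|}+\log|W|\ \le\ (n+1)\log\|{\bf f}\|\ +\ \frac12\sum_{k=0}^{n-1}m_k\log\Bigl(C\sum_{j=1}^{q}\frac{\|F_{k+1};H_j\|^2}{\|F_k;H_j\|^{2-2\lambda}}\,h_k\Bigr)\ +\ O(1).
\]
This is obtained by combining the definition of the projective distances $\|F_k;H_j\|$, the identities $h_k=\|{\bf F}_{k-1}\|^2\|{\bf F}_{k+1}\|^2/\|{\bf F}_k\|^4$ of Lemma~\ref{h1} (which telescope when multiplied over $k$), the relation between $W$ and the Wronskian of the linear forms ${\bf a}_j({\bf f})$ for linearly independent families, and the product-to-sum estimate of Lemma~\ref{product3} (applied to the linearly independent subfamilies that realize the maximum; its exponents $n-k$ are what produce the weights $m_k$ summing to $\frac{n(n+1)}{2}$). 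Establishing this inequality --- keeping all the exponents straight so that $\log|W|$ is fully absorbed on the left while the right-hand side contains only $(n+1)\log\|{\bf f}\|$ together with quantities to which Ahlfors' estimate applies --- is the step I expect to be the main obstacle; the remainder is bookkeeping.

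Next I would integrate this inequality over $|z|=r$ and use concavity of $\log$ (Jensen) to move the angular mean inside each logarithm; the term $(n+1)\int_0^{2\pi}\log\|{\bf f}(re^{i\theta})\|\frac{d\theta}{2\pi}$ equals $(n+1)T_f(r)+O(1)$. For the remaining terms set
\[
A_{k,j}(r)=\int_0^r\frac{dt}{t}\int_{|z|<t}\frac{\|F_{k+1};H_j\|^2}{\|F_k;H_j\|^{2-2\lambda}}\,\Omega_k,
\]
so that $\frac1r\frac{d}{dr}\bigl(r\,A_{k,j}'(r)\bigr)=2\int_0^{2\pi}\frac{\|F_{k+1};H_j\|^2}{\|F_k;H_j\|^{2-2\lambda}}h_k(re^{i\theta})\frac{d\theta}{2\pi}$. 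By Ahlfors' estimate (Theorem~\ref{alf}), $A_{k,j}(r)\le\frac{8}{\lambda^2}T_{F_k}(r)+O(1)$, and Lemma~\ref{h3} then gives $\log^+A_{k,j}(r)\le\log^+T_{F_k}(r)+O(1)=O(\log T_f(r))$. Applying (\ref{cs}) to each $A_{k,j}$ with $\gamma(r)=\exp((c_f+\epsilon)T_f(r))$ and $\delta=2\epsilon$ bounds, outside a set $E_{k,j}$ with $\int_{E_{k,j}}\exp((c_f+\epsilon)T_f(r))\,dr<\infty$, the angular mean of $\frac{\|F_{k+1};H_j\|^2}{\|F_k;H_j\|^{2-2\lambda}}h_k$ by $\frac12 r^{2\epsilon}\gamma(r)^{2+2\epsilon}A_{k,j}(r)^{(1+2\epsilon)^2}$; taking logarithms and inserting the bounds above yields, for $r$ outside $E=\bigcup_{k,j}E_{k,j}$,
\[
\log\Bigl(C\sum_{j=1}^q\int_0^{2\pi}\frac{\|F_{k+1};H_j\|^2}{\|F_k;H_j\|^{2-2\lambda}}h_k(re^{i\theta})\frac{d\theta}{2\pi}\Bigr)\ \le\ 2(1+\epsilon)(c_f+\epsilon)T_f(r)+O(\log T_f(r))+2\epsilon\log r.
\]

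Finally I would assemble: multiplying the last display by $\tfrac12 m_k$, summing over $k=0,\dots,n-1$, and using $\sum_{k}m_k=\frac{n(n+1)}{2}$ (so that each factor $2$ in that display cancels the $\tfrac12$), then adding the $(n+1)T_f(r)+O(1)$ coming from the $\log\|{\bf f}\|$ term, one obtains exactly
\[
\int_0^{2\pi}\max_{K}\sum_{j\in K}\log\frac{1}{\|f(re^{i\theta});H_j\|}\frac{d\theta}{2\pi}+N_W(r,0)\ \le\ (n+1)T_f(r)+\frac{n(n+1)}{2}(1+\epsilon)(c_f+\epsilon)T_f(r)+O(\log T_f(r))+\frac{n(n+1)}{2}\epsilon\log r
\]
for all $r\in(0,R)$ outside $E$. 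Being a finite union of the $E_{k,j}$, the set $E$ again satisfies $\int_E\exp((c_f+\epsilon)T_f(r))\,dr<\infty$; and since $c_f+\epsilon>c_f$ we have $\int_0^R\exp((c_f+\epsilon)T_f(r))\,dr=\infty$, so (\ref{cs}) is indeed applicable with this $\gamma$. As indicated, the only genuinely delicate point is the pointwise main inequality of the second paragraph; all the analytic estimates it feeds into have been prepared as Lemmas~\ref{h1}, \ref{product3}, \ref{h3} and Theorem~\ref{alf}.
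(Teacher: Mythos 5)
Your overall architecture (Green--Jensen to convert $N_W(r,0)$ into $\int\log|W|$, telescoping via Lemma \ref{h1}, product-to-sum via Lemma \ref{product3}, Ahlfors' estimate plus (\ref{cs}) with $\gamma(r)=\exp((c_f+\epsilon)T_f(r))$ for the error term) is the same as the paper's, but the step you yourself flag as the main obstacle --- the ``core pointwise inequality'' with a \emph{fixed} $\lambda=\tfrac12$ --- is precisely where the argument breaks, and it cannot be repaired with $\lambda$ bounded away from $0$. When you write $\frac{\phi_{k+1}(H_j)}{\phi_k(H_j)}=\frac{\phi_{k+1}(H_j)}{\phi_k(H_j)^{1-\lambda}}\cdot\phi_k(H_j)^{-\lambda}$ and apply Lemma \ref{product3}, the factors $\prod_{k=0}^{n-1}\prod_{j\in K}\phi_k(H_j)^{-\lambda}$ remain on the right-hand side; since $\phi_k\le 1$ they are $\ge 1$ and unbounded, so they cannot go into the $O(1)$. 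Your stated inequality simply omits them, and it is false as stated: heuristically, letting $\lambda\uparrow 1$ the right-hand side collapses (using $\prod_k h_k^{\,n-k}=|W|^2/\|{\bf f}\|^{2(n+1)}$) to $\log|W|+O(1)$, which would force $\max_K\sum_{j\in K}\log(1/\|f;H_j\|)\le O(1)$; for $\lambda=\tfrac12$ the deficit is the term $\tfrac{\lambda}{2}\sum_k\sum_{j\in K}\log(1/\phi_k(H_j))$, whose integral is $\lambda\sum_{k,j}m_{F_k}(r,H_j)$, of the same order as $T_f(r)$. So $\lambda$ does \emph{not} enter ``only through multiplicative constants'': a fixed $\lambda$ produces an additive main-order term that destroys the coefficient $(n+1)$ in the conclusion.

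The paper's proof resolves exactly this point by taking $\lambda=\Lambda(r)=\min_k\{1/T_{F_k}(r)\}$, i.e.\ $r$-dependent and tending to $0$. Then the leftover factors, after integration, are controlled by the weak First Main Theorem (Theorem \ref{first}): $\Lambda(r)\,m_{F_k}(r,H_j)\le \Lambda(r)T_{F_k}(r)+O(1)=O(1)$, so they contribute only $O(1)$, while the loss $1/\Lambda(r)^2$ in Ahlfors' estimate (Theorem \ref{alf}) only enters through $\hat T(r)=O(T_f^3(r))$ and hence, via Lemma \ref{h3} and the concavity of $\log$ in (\ref{cs}), costs only $O(\log T_f(r))$. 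If you replace your fixed $\lambda$ by $\Lambda(r)$ and add the weak First Main Theorem step, the rest of your bookkeeping (weights $n-k$ summing to $\tfrac{n(n+1)}{2}$, $\delta=2\epsilon$ in (\ref{cs}), finite union of exceptional sets) goes through and coincides with the paper's argument.
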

\begin{proof}
Without  loss of generality, we may assume $q \ge n+1$ and 
that $\#K = n+1$.   Let $T$ be the set of all 
the injective maps $\mu: \{0,1,\dots,n\} \rightarrow \{1,\dots,q\}$ such that 
${\bf a}_{\mu(0)},\dots, {\bf a}_{\mu(n)}$ are linearly independent. 
Take 
\begin{equation}\label{gamma}
\lambda:=\Lambda(r)=\min_k \left\{{1\over T_{F_k}(r)}\right\}.
\end{equation}
For any $\mu \in T$, by Lemma \ref{product3} with $\lambda = \Lambda (r)$ and 
notice that $\phi_k(H) =\|F_k, H\|^2$, it gives, for $0\leq k\leq n-1$, 
\[ \prod_{j=0}^n
 {\phi_{k+1}(H_{\mu(j)})\over \phi_k (H_{\mu(j)})^{1-\Lambda(r)} }
\leq c_k \left(\sum_{j=0}^n {\phi_{k+1}(H_{\mu(j)})\over \phi_k (H_{\mu(j)})^{1-\Lambda(r)} }\right)^{n-k}\]
for some constant $c_k>0$. 
Since $\phi_n(H_{\mu(j)})$ is a constant for any $0 \leq j \leq n$ and $F_0=f$, the above inequality implies that 
\[ \prod_{j=0}^n {1 \over \|f; H_{\mu(j)}\|^{2}}
\leq c \prod_{k=0}^{n-1} \left(\sum_{j=0}^n {\phi_{k+1}(H_{\mu(j)})\over \phi_k (H_{\mu(j)})^{1-\Lambda(r)} }\right)^{n-k}
\cdot \prod_{k=0}^{n-1}\prod_{j=0}^n {1 \over \phi_k(H_{\mu(j)})^{\Lambda(r)}}\]
for some constant $c>0$.
Therefore
\begin{eqnarray}  
\lefteqn{\int_0^{2\pi}\max_K\sum_{j \in K}
\log{1\over \|f(re^{i\theta}); H_j\|^{2}}{d\theta\over 2\pi}\nonumber= \int_0^{2\pi}\max_{\mu \in T}\log
\prod_{j=0}^n{1\over \|f(re^{i\theta}); H_{\mu(j)}\|^2}{d\theta\over 2\pi}} \\
&\leq&  \sum_{k=0}^{n-1}\int_0^{2\pi} \max_{\mu \in T}\log \left(\sum_{j=0}^n {\phi_{k+1}(H_{\mu(j)})\over \phi_k (H_{\mu(j)})^{1-\Lambda(r)} } (re^{i\theta})\right)^{n-k}  {d\theta\over 2\pi}\nonumber  \\
&&+ \sum_{k=0}^{n-1}\sum_{j=0}^n\int_0^{2\pi}\max_{\mu \in T}
\log {1 \over \phi_k(H_{\mu(j)})^{\Lambda(r)}(re^{i\theta}) }{d\theta\over 2\pi}  + O(1)\nonumber\\
&=& \sum_{k=0}^{n-1} (n-k) \int_0^{2\pi} \max_{\mu \in T}\log \left(\sum_{j=0}^n {\phi_{k+1}(H_{\mu(j)})\over \phi_k (H_{\mu(j)})^{1-\Lambda(r)}} (re^{i\theta})\cdot 
 h_k(re^{i\theta}) \right) {d\theta\over 2\pi} \nonumber \\
&~&-2\sum_{k=0}^{n-1}(n-k) S_k(r) 
+ \sum_{k=0}^{n-1}\sum_{j=0}^n\int_0^{2\pi}\max_{\mu \in T}
\log {1 \over \phi _k(H_{\mu(j)})^{\Lambda(r)} (re^{i\theta})}
{d\theta\over 2\pi}  + O(1), \nonumber
\end{eqnarray}
where $h_k$ is defined in (\ref{hk}). 
By Lemma \ref{h2}, noticing that $N_W(r, 0)=N_{d_n}(r)$, we have
\begin{eqnarray*}
&~&\sum_{k=0}^{n-1} (n-k)S_k(r) = \sum_{k=0}^{n-1} (n-k) N_{d_k}(r) \\
&~&+ \sum_{k=0}^{n-1} (n-k) (T_{F_{k-1}}(r)- 
2 T_{F _{k}}(r) + T_{F _{k+1}}(r)) +O(1)\\
&=& N_{d_n}(r) -(n+1)T_f(r) + O(1) = N_W(r, 0)-(n+1)T_f(r)+O(1).
\end{eqnarray*}
Also, by  Theorem \ref{first} (the weak First Main Theorem) and (\ref{gamma}), 
\begin{eqnarray*}
\lefteqn{\sum_{k=0}^{n-1}\sum_{j=0}^n\int_0^{2\pi}\max_{\mu \in T}
\log {1 \over \phi _k(H_{\mu(j)})^{\Lambda(r)} (re^{i\theta})}
{d\theta\over 2\pi}   } \nonumber\\
&=& \sum_{\mu \in T}\sum_{k=0}^{n-1}\sum_{j=0}^n 2\Lambda(r) 
m_{F _k}(r, H_{\mu(j)}) + O(1) \nonumber\\
&\leq& \sum_{k=0}^{n-1}\sum_{j=0}^n  2q!\Lambda(r) 
T_{F _k}(r) + O(1)  \nonumber
\leq O(1).
\end{eqnarray*}
So
\begin{equation}\label{final}
\int_0^{2\pi}\max_K\sum_{j \in K}
\log{1\over \|f(re^{i\theta}); H_j\|}{d\theta\over 2\pi} \leq (n+1)T_f(r) -N_W(r, 0) +  G(r)+O(1),
\end{equation}
where $$G(r)={1\over 2} \sum_{k=0}^{n-1}(n-k) \int_0^{2\pi} \max_{\mu \in T}\log \left(\sum_{j=0}^n {\phi_{k+1}(H_{\mu(j)})\over 
\phi_k(H_{\mu(j)})^{1-\Lambda(r)}}(re^{i\theta})\cdot h_k(re^{i\theta})\right){d\theta\over 2\pi}.$$
We now estimate $G(r)$.
Let 
$$
\hat T(r): = \int_0^r \left(\int_{|z| < t} {\phi _{k+1}(H) \over \phi _{k}(H)^{1 - \Lambda(r)}} h_k {\sqrt{-1}\over 2\pi} dz\wedge d{\bar z}\right)
 {dt\over t}.$$
 Then, from Theorem \ref{alf},  (\ref{gamma}) and Lemma \ref{h3}, we get 
 \begin{equation}\label{hat}\hat T(r)\leq O(T^3_{F_{k}}(r)) =O(T^3_f(r)).
 \end{equation}
Then, by (\ref{cs}) with $\gamma(r)=e^{(c_f+\epsilon)T_f(r)}$,   for every hyperplane $H$, 
$$\int_0^{2\pi} {\phi_{k+1}(H)(re^{i\theta})\over \phi_k(H)^{1- \Lambda(r)} (re^{i\theta})} 
h_k(re^{i\theta}) {d\theta\over 2\pi} \leq r^{2\epsilon}  e^{(c_f+\epsilon)(2+2\epsilon)T_f(r)} \cdot \hat T^{(1+2\epsilon)^2}(r)~\|.$$
 This, together with the  concavity of $\log$ and (\ref{hat}), gives
 \begin{eqnarray*}
G(r)&=&{1\over 2} \sum_{k=0}^{n-1}(n-k) \int_0^{2\pi} \log \max_{\mu \in T}\sum_{j=0}^n {\phi_{k+1}(H_{\mu(j)})\over 
\phi_k(H_{\mu(j)})^{1-\Lambda(r)}} h_k(re^{i\theta}) {d\theta\over 2\pi}\\
&\leq&  \sum_{k=0}^{n-1} {n-k\over 2}\log  \int_0^{2\pi}  \sum_{j=1}^q {\phi_{k+1}(H_j)(re^{i\theta})\over \phi_k(H_j)^{1- \Lambda(r)} (re^{i\theta})} 
h_k(re^{i\theta}) {d\theta\over 2\pi}+O(1)\\
&\leq&  ((c_f+\epsilon)(2+2\epsilon)T_f(r)+2\epsilon\log r)\sum_{k=0}^{n-1} {n-k\over 2}  + 
O(\log T_f(r)) \|\\
& =&  {n(n+1)\over 2} \left( (1+\epsilon)(c_f+\epsilon)T_f(r)  + \epsilon \log r\right) + O(\log T_f(r))~\|.
\end{eqnarray*}
Combining this with (\ref{final}) proves Theorem \ref{gen}.
\end{proof}

\noindent{\bf E. The proof of Theorem \ref{thb}}.  We first consider the case when $k=n$, i.e. $f$ is linearly non-degenerate. We need the following lemma.
\begin{lemma}[see Lemma  A3.1.6 in \cite{Ru}]
 Let $H_1,\dots,
H_q$ be hyperplanes in ${\Bbb P}^n({\Bbb C})$ in general position.
Denote by T
the set of all injective maps $\mu: \{0,1,\dots,n\} \rightarrow 
\{1,\dots,q\}$. 
Then 
$$ \sum_{j=1}^q  m_f(r, H_j) \leq 
\int_0^{2\pi}\max_{\mu \in T}\sum_{i=0}^n \log{1\over \|f(re^{i\theta}); H_{\mu(i)}\|}
 {d\theta\over 2\pi} + O(1). $$
 \end{lemma}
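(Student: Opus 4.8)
The plan is to establish the inequality \emph{pointwise} in the argument $z=re^{i\theta}$ and then integrate over $\theta$; note that the statement is only meaningful when $q\ge n+1$, which I assume. Fix such a $z$ and set $x=f(z)\in{\mathbb P}^n({\mathbb C})$, which lies on none of the $H_j$. Relabel the hyperplanes so that $\|x;H_{j_1}\|\le\|x;H_{j_2}\|\le\cdots\le\|x;H_{j_q}\|$. Each summand $\log\frac{1}{\|x;H\|}$ is nonnegative since $\|x;H\|\le 1$ by the Cauchy--Schwarz inequality, and $T$ consists of \emph{all} injective maps $\{0,\dots,n\}\to\{1,\dots,q\}$; hence $\max_{\mu\in T}\sum_{i=0}^{n}\log\frac{1}{\|x;H_{\mu(i)}\|}$ is exactly the sum of the $n+1$ largest of the numbers $\log\frac{1}{\|x;H_1\|},\dots,\log\frac{1}{\|x;H_q\|}$, i.e. $\sum_{i=1}^{n+1}\log\frac{1}{\|x;H_{j_i}\|}$. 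So it suffices to bound the tail $\sum_{i=n+2}^{q}\log\frac{1}{\|x;H_{j_i}\|}$ by a constant independent of $x$.

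The one substantive point is a uniform lower bound for the $(n+1)$-st smallest distance: there is a constant $c=c(H_1,\dots,H_q)>0$ such that $\|x;H_{j_{n+1}}\|\ge c$ for every $x$; equivalently, at most $n$ of the numbers $\|x;H_j\|$ can be $<c$. I would prove this by compactness. If it failed, there would be points $x_m$ and, by the pigeonhole principle, a fixed subset $K\subset\{1,\dots,q\}$ with $\#K=n+1$ such that $\|x_m;H_j\|\to 0$ for all $j\in K$ along a subsequence; a limit point $x_\infty$ of $\{x_m\}$ in the compact space ${\mathbb P}^n({\mathbb C})$ would then satisfy $\|x_\infty;H_j\|=0$ for all $j\in K$, i.e. $x_\infty\in\bigcap_{j\in K}H_j=\emptyset$, the intersection being empty because the $H_j$ are in general position and $\#K=n+1$. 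Granting this, for every $i\ge n+1$ we have $\|x;H_{j_i}\|\ge c$, hence $\log\frac{1}{\|x;H_{j_i}\|}\le\log\frac1c$, so the tail is at most $(q-n)\log\frac1c=O(1)$.

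Combining the two observations gives, for every $z=re^{i\theta}$, the pointwise estimate $\sum_{j=1}^{q}\log\frac{1}{\|f(z);H_j\|}\le\max_{\mu\in T}\sum_{i=0}^{n}\log\frac{1}{\|f(z);H_{\mu(i)}\|}+O(1)$. Integrating against $\frac{d\theta}{2\pi}$ and recalling that $m_f(r,H_j)$ equals $\int_0^{2\pi}\log\frac{1}{\|f(re^{i\theta});H_j\|}\frac{d\theta}{2\pi}$ up to a bounded term then yields the asserted inequality. I do not anticipate a genuine obstacle: the only steps requiring care are the compactness argument producing the uniform constant $c$, and the observation that the nonnegativity $\|x;H\|\le 1$ is precisely what allows one to identify $\max_{\mu\in T}$ with the sum of the $n+1$ largest summands.
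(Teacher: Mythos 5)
Your proof is correct, and since the paper gives no proof of its own here (it simply cites Lemma A3.1.6 in [Ru]), your compactness argument — at most $n$ of the quantities $\|x;H_j\|$ can be small when the hyperplanes are in general position, so the full sum exceeds the max over $(n+1)$-element subsets by only $O(1)$ — is exactly the standard argument behind the cited lemma. The only points needing the care you already note are the implicit assumption $q\ge n+1$ and that the pointwise bound is applied off the discrete set where $f(re^{i\theta})$ meets some $H_j$, which does not affect the integration.
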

Theorem \ref{gen}, together with the above Lemma, proves Theorem \ref{thb} in this case. 

We now deal with  the case when $f$ is degenerate. By the assumption, we can assume that  
 $f(\bigtriangleup(R))\subset {\Bbb P}^k(\Bbb C)$ with $0\leq k <n$ and $f$ becomes linearly non-degenerate. 
We also assume that $q \ge 2n-k+1$.
Denote by ${\hat H}_j = H_j\cap {\Bbb P}^k({\Bbb C})$. Then ${\hat H}_j$ are hyperplanes 
in ${\Bbb  P}^k({\Bbb C})$ located in n-subgeneral position. Here hyperplanes $H_1, \dots, H_q $ (or $\bf a_1, 
\dots, \bf a_q$)  in ${\Bbb P}^k({\Bbb C})$ are said to be  in {\it $n$-subgeneral position} if, for every 
$1 \leq i_0 < \cdots < i_n \leq q$, 
 the linear span of ${\bf a}_{i_0}, \dots, {\bf a}_{i_n}$
is ${{\Bbb C}^{k+1}}^*$.
We recall the following result due to Nochka.
\begin{lemma}[See Theorem A3.4.3 in \cite{Ru}]\label{Nochka}
Let $H_1, \dots, H_q$ (or ${\bf a}_1, \dots, {\bf a}_q$) be hyperplanes 
in ${\Bbb P}^k({\Bbb C})$ in $n-$subgeneral positions with  $2n - k +1 \leq q$.
Then there exists a function $\omega:
\{1,\dots,q\} \rightarrow (0,1]$ called a Nochka weight and a real
number $\theta \ge 1$ called Nochka constant satisfying the following
properties:

(i) If $j \in \{1,\dots,q\}$, then $0 \leq \omega(j) \theta \leq 1$.

(ii) $q - 2n + k -1 = \theta(\sum_{j=1}^q \omega(j) - k -1).$

(iii) If $\emptyset \not= B \subset \{1,\dots,q\}$ with $\# B \leq n+1$, then
$\sum_{j \in B}\omega(j) \leq \dim L(B)$, where  $L(B)$ is the linear
space generated by $\{{\bf a}_j | j \in B \}$, 

(iv) $1 \leq (n+1)/(k+1) \leq \theta \leq (2n - k +1)/(k+1)$.

(v) Given real numbers $E_1,\dots,E_q$ with $E_j \ge 1$ for $1 \leq j \leq q$,
 and given 
 any $Y \subset \{1,\dots,q\}$ with $0 < \#Y \leq n+1,$
 there exists a subset M of Y with $\#M = \dim L(Y)$ such that $\{{\bf a}_j
\}_{j \in M}$ is a basis for $L(Y)$ where  $L(Y)$ is the linear
space generated by $\{{\bf a}_j | j \in Y \}$, 
 and 
\[ \prod_{j \in Y}
E_j^{\omega(j)}
\leq \prod_{j \in M} E_j.\]
\end{lemma}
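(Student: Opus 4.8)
This is a purely combinatorial and linear-algebraic statement about configurations of hyperplanes in $n$-subgeneral position in $\PP^k(\CC)$; it involves no analysis and is entirely independent of the disc $\bigtriangleup(R)$. The plan is to follow Nochka's construction of the weights, in the streamlined matroid-theoretic form due to W.~Chen and reproduced in \cite{Ru}. First I would reformulate everything: the set function $\rho(A):=\dim L(A)$, $A\subseteq\{1,\dots,q\}$, is the rank function of the linear matroid on ${\bf a}_1,\dots,{\bf a}_q$, hence monotone and submodular with $\rho(\{1,\dots,q\})=k+1$; and the hypothesis of $n$-subgeneral position translates (via a short argument — the ``key lemma'' of the subject) into the quantitative bound
$$\#A\ \le\ \rho(A)+(n-k)\qquad\text{whenever }\#A\le n+1,$$
so that every subset of rank $<k+1$ has at most $n$ elements. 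All five conclusions are then to be read off from this matroid together with one auxiliary device, the \emph{Nochka diagram}.

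The Nochka diagram is a maximal chain $\emptyset=A_0\subsetneq A_1\subsetneq\cdots\subsetneq A_m=\{1,\dots,q\}$ built greedily by peeling off, at each stage, the ``most degenerate'' enlargement of what has been chosen: given $A_{i-1}$, one takes $A_i\supsetneq A_{i-1}$ to be inclusion-maximal among the subsets extremizing the cardinality-to-rank-increment ratio $(\#A-\#A_{i-1})/(\rho(A)-\rho(A_{i-1}))$, submodularity of $\rho$ guaranteeing that the maximizer is well defined and that the successive ratios $\sigma_1\ge\sigma_2\ge\cdots\ge\sigma_m$ are non-increasing. One then declares the weight $\omega(j)$ to be constant on each block $B_i:=A_i\setminus A_{i-1}$, equal to a fixed normalization of $1/\sigma_i$, and defines the Nochka constant $\theta$ by the single linear equation (ii), which the construction renders solvable (one checks $\sum_j\omega(j)\ge k+1$). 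With the weights in hand, (i) follows from $\sigma_1\ge1$ and the monotonicity of the $\sigma_i$; (iv) follows by evaluating the subgenerality bound above on $A_1$ and on $\{1,\dots,q\}$ and comparing $\theta$ with $(n+1)/(k+1)$ and $(2n-k+1)/(k+1)$; and (iii), for an arbitrary $B$, follows by splitting $B$ along the blocks, estimating $\sum_{j\in B\cap B_i}\omega(j)$ by $\omega_i\cdot\#(B\cap B_i)$, and invoking submodularity of $\rho$ together with the maximality built into the diagram.

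The main obstacle is property (v), the product-to-subset inequality $\prod_{j\in Y}E_j^{\omega(j)}\le\prod_{j\in M}E_j$. Here the plan is to build $M$ block by block: inside each block $B_i$, keep a suitable number (dictated by the rank increments of $L(Y)$) of the elements of $B_i\cap Y$ carrying the largest values of $E_j$, and verify through the matroid exchange axiom that the union of these choices is genuinely a basis of $L(Y)$ with $\#M=\dim L(Y)$. Over a single block the inequality is the rearrangement statement that, the weights there being equal and bounded by (rank increment)$/$(size), the weighted product of the factors $E_j\ge1$ is dominated by the product over the top-weight elements; the passage from blocks to all of $Y$ is a telescoping that exploits (iii) and the monotonicity $\sigma_1\ge\sigma_2\ge\cdots$ so that the per-block slack cancels. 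The genuinely delicate point — ensuring at every stage that the partially assembled $M$ remains extendable to a basis of $L(Y)$ and that no factor with $E_j\ge1$ is ever dropped to our disadvantage — is precisely the content of \cite[Theorem~A3.4.3]{Ru}, and since the argument uses no idea special to the present disc setting one may simply appeal to it.
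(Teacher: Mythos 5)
The paper offers no proof of this lemma at all --- it is quoted as a known result, Nochka's theorem, with the reference Theorem A3.4.3 in \cite{Ru} --- and your proposal, after outlining the standard Nochka--Chen construction (rank function, subgeneral-position bound $\#A\le\dim L(A)+n-k$, greedy filtration, weights constant on blocks, $\theta$ fixed by (ii)), ultimately rests on that same citation for the decisive property (v), so in substance you take the same route as the paper. Your sketch is consistent with the argument in \cite{Ru}, so there is nothing to correct.
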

We now continue our proof. Since $H_1, \dots, H_q$ (or ${\bf a}_1, \dots, {\bf a}_q$) are 
hyperplanes in $n$-subgeneral position, 
for each $z \in   \bigtriangleup(R)$, there are  (see the proof of Lemma B3.4.4 in \cite{Ru} for 
detail)
indices $i(z, 0), \dots, i(z, n) \in \{1, \dots, q\}$ such that  
\begin{equation}\label{u}
\prod_{j=1}^q {1\over \|f(z); H_j\|^{\omega(j)}} \leq 
C\prod_{l=0}^n {1\over \|f(z); {\hat H}_{i(z,l)}\|^{\omega(i(z,l))}}
\end{equation}
where $\omega(j)$  is the Nochka weight corresponding to ${\hat H}_j$
and $C>0$ is a constant.
Applying Lemma \ref{Nochka} with
$$E_l
= {1\over  \|f(z); {\hat H}_{i(z,l)}\|}, ~~~0 \leq l \leq n,$$ there is a subset $M$ of $Y=\{i(z,0),\dots,i(z,n)\}$ with $\#M=
k+1$ such that $\{{\hat H}_{i(z,j)} | i(z,j) \in M \}
$ is linearly independent, and 
$$\prod_{l=0}^n {1\over  \|f(z); {\hat H}_{i(z,l)}\|^{\omega(i(z, l))}} \leq \prod_{i(z,j) \in M} {1\over \|f(z); {\hat H}_{i(z,l)}\|}.$$
Thus, together with (\ref{u}), 
$$
\prod_{j=1}^q {1\over \|f(z); H_j\|^{\omega(j)}} \leq 
C \max_{\gamma \in \Gamma}\prod_{l=0}^{k} {1\over \|f(z); {\hat H}_{\gamma(l)}\|}
$$
where $\Gamma$ is the set of all maps $\gamma: \{0,\dots, k\} \rightarrow
\{1,\dots,q\}$ such that
${\hat H}_{\gamma(0)}, \dots, {\hat H}_{\gamma(k)}$ are linearly independent.
Hence, by applying the integration, we get,  together with Theorem \ref{gen}, 
\begin{eqnarray*}
\lefteqn{\sum_{j=1}^q \omega(j)m_f(H_j, r) \leq \int_0^{2\pi}
 \max_{\gamma \in \Gamma}\sum_{l=0}^{k}\log {1\over \|f(re^{i\theta}); {\hat H}_{\gamma(l)}\|} 
  {d\theta \over 2\pi} + O(1)}\\
&\leq& (k+1)T_f(r) - N_{f, ram}(r)+ {k(k+1)\over 2} (1+\epsilon)(c_f+\epsilon) T_f(r) \\
&~&+ O(\log T_f(r)) +{k(k+1)\over 2}\epsilon \log r ~\|.
\end{eqnarray*}
By Lemma \ref{Nochka}, and recalling that $m_f(r, H_j) \leq T_f(r) + O(1)$,  it gives
\begin{eqnarray*}
\lefteqn{\sum_{j=1}^q m_f(r, H_j)
=  \sum_{j=1}^q (1-\theta\omega(j))m_f(r, H_j) +  
\sum_{j=1}^q \theta\omega(j)m_f(r, H_j)}\\ 
&\leq& \sum_{j=1}^q (1-\theta\omega(j))m_f(r, H_j) +  \theta(k+1)T_f(r)
-\theta N_{f, ram}(r) \\
&~& + ~\theta{k(k+1)\over 2}\left((1+\epsilon)(c_f+\epsilon) T_f(r) +\epsilon \log r\right) +  O(\log T_f(r)) \\
&\leq& \sum_{j=1}^q (1-\theta\omega(j))T_f(r) + \theta(k+1)T_f(r)
-\left({n+1\over k+1}\right)N_{f, ram}(r) \\
&~& + ~{(2n -k+1)k\over 2}\left((1+\epsilon)(c_f+\epsilon) T_f(r) +\epsilon \log r\right) +  O(\log T_f(r))\\
&=& \left\{q-\theta\left(\sum_{1 \leq j \leq q}\omega(j) - 
k - 1\right)\right\} T_f(r) -\left({n+1\over 
k+1}\right)N_{f, ram}(r) \\
&~& +~{(2n -k+1)k\over 2}\left((1+\epsilon)(c_f+\epsilon) T_f(r) +\epsilon \log r\right) +  O(\log T_f(r))\\
&=& (2n-k+1)T_f(r)- \left({n+1\over k+1}\right)N_{f, ram}(r)\\
&~& + ~{(2n -k+1)k\over 2}\left((1+\epsilon)(c_f+\epsilon) T_f(r) +\epsilon \log r\right) +  O(\log T_f(r)),
\end{eqnarray*}
where the inequality holds  for all $r \in (0, R)$  outside a set $E$ 
with 
$\int_E \exp((c_f+\epsilon)T_{f}(r))dr <\infty$.
This proves Theorem \ref{thb}.

\section{The  Logarithmic Derivative Lemma and the fundamental vanishing theorem}
We begin with the following Logarithmic Derivative Lemma for meromorphic functions. 
\begin{theorem}[Logarithmic Derivative Lemma]\label{l}  Let $0<R\leq \infty$ and  let $\gamma(r)$ be a function defined 
on $(0, R)$ with $\int_0^R \gamma(r) dr=\infty$. Let $f(z)$ be a meromorphic function on  $\bigtriangleup(R)$. 
Then, for $\delta>0$,  the inequality
$$\int_0^{2\pi} \log^+\left|{f'\over f}(re^{i\theta})\right|{d\theta\over 2\pi} 
\leq  (1+\delta) \log \gamma(r) + \delta \log r + O(\log T_f(r))$$
holds outside a set $E\subset (0, R)$ 
with 
$\int_E  \gamma(r)   dr <\infty$.
\end{theorem}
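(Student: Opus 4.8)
The plan is to bring the left-hand side into the shape to which the Calculus Lemma of this section applies, exactly as in the proofs of Theorems~\ref{tha}--\ref{thas}. Fix $\delta>0$ and pick auxiliary numbers $0<\eta<2\delta$ and $p:=(2+\eta)/(1+\delta)$, so that $1<p<2$. Since $\log^+x\le p^{-1}\log(1+x^{p})$, integrating over the circle and then applying Jensen's inequality (concavity of $\log$) gives
\[
\int_0^{2\pi}\log^+\Bigl|\tfrac{f'}{f}(re^{i\theta})\Bigr|\,\tfrac{d\theta}{2\pi}\ \le\ \tfrac1p\,\log\Bigl(1+\int_0^{2\pi}\Bigl|\tfrac{f'}{f}(re^{i\theta})\Bigr|^{p}\tfrac{d\theta}{2\pi}\Bigr).
\]
Put $\Gamma:=|f'/f|^{p}$ on $\bigtriangleup(R)$; since $p<2$ this is locally integrable, so $T_{\Gamma}(r)$ is finite and $(\ref{cs})$ is available (its proof uses only that $T_{\Gamma}$ is absolutely continuous with $rT_{\Gamma}'$ nondecreasing, and that $T_{\Gamma}\to\infty$, for which one may add the density of $f^{*}\omega_{FS}$ to $\Gamma$ if necessary). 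With $\lambda(r),T_{\Gamma}(r)$ as in the discussion around $(\ref{cs})$, with $\gamma$ the function of the statement and with exponent $\eta$, one obtains, for $r$ outside a set $E$ with $\int_E\gamma\,dr<\infty$,
\[
\lambda(r)\ \le\ \tfrac12\,r^{\eta}\,\gamma^{2+\eta}(r)\,T_{\Gamma}^{(1+\eta)^2}(r);
\]
taking logarithms, dividing by $p$, and using $(2+\eta)/p=1+\delta$ and $\eta/p\le\delta$,
\[
\int_0^{2\pi}\log^+\Bigl|\tfrac{f'}{f}(re^{i\theta})\Bigr|\,\tfrac{d\theta}{2\pi}\ \le\ (1+\delta)\log\gamma(r)+\delta\log r+\tfrac{(1+\eta)^2}{p}\,\log^+T_{\Gamma}(r)+O(1)
\]
for $r\notin E$ (after the harmless normalizations $\gamma\ge1$ and $r$ bounded away from $0$). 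So everything reduces to the polynomial bound $\log^+T_{\Gamma}(r)=O(\log T_f(r))$, up to enlarging $E$ by a further set of finite $\gamma$-measure.

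For that bound I would estimate $|f'/f|$ on $\bigtriangleup(r)$ from the differentiated Poisson--Jensen formula on a slightly larger disc $\bigtriangleup(\rho)$, with $r<\rho<\rho'<R$ and $\rho'-\rho\ge\rho-r$; equivalently, one writes $f=B\cdot g$ on $\bigtriangleup(\rho)$ with $B$ the finite Blaschke-type product over the zeros and poles of $f$ there and $g$ holomorphic and zero-free, so that $f'/f=B'/B+g'/g$. This represents $|f'/f|$ on $\bigtriangleup(r)$ as a Poisson term of size $O\big(\rho(\rho-r)^{-2}T_f(\rho)\big)$ plus Blaschke-type terms $\sum_c\big((z-c)^{-1}+O((\rho-r)^{-1})\big)$ over the zeros/poles $c$ of $f$ in $\bigtriangleup(\rho)$, whose number $n_f(\rho,0)+n_f(\rho,\infty)$ is $O\big((\rho-r)^{-1}T_f(\rho')\big)$ by $n(\rho)\log(\rho'/\rho)\le N(\rho')$ and the First Main Theorem. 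Since $p<2$ one has $\int_{|z|<t}|z-c|^{-p}\,\tfrac{\sqrt{-1}}{2\pi}dz\wedge d\bar z=O(\rho^{\,2-p})$; combining these (with the power-mean inequality $(\sum a_i)^{p}\le(\#)^{p-1}\sum a_i^{p}$ for the Blaschke sum) and integrating $\int_0^r\frac{dt}{t}(\cdots)$ --- the contribution of small $t$ being $O(1)$ because $p<2$ --- gives $T_{\Gamma}(r)=O\big(\mathrm{poly}\big(\rho,(\rho-r)^{-1},T_f(\rho')\big)\big)$. Finally I would invoke the Calculus Lemma, applied to $h=T_f$ to compare the characteristic at nearby radii, to choose --- for $r$ outside one more set of finite $\gamma$-measure --- radii $\rho=\rho(r)<\rho'=\rho'(r)$ with $T_f(\rho')\le T_f(r)^{1+\eta}$ and $(\rho-r)^{-1}$ no larger than a fixed power of $T_f(r)$; then $\log^+T_{\Gamma}(r)=O(\log T_f(r))$, which closes the argument.

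The formal part is the concavity reduction and the plug-in to $(\ref{cs})$; the real work, and the main obstacle, is precisely the estimate $\log^+T_{\Gamma}(r)=O(\log T_f(r))$, where two points need care. First, the accumulation of the zeros and poles of $f$ toward the circle $|z|=\rho$ must be absorbed, which is why one works with the exponent $p<2$ rather than the formally natural $|f'/f|^2$ (whose characteristic is infinite). Second, the passage from the outer radius $\rho'$ back to $r$ has to be arranged so that the exceptional set keeps finite $\gamma$-measure, not merely finite Lebesgue measure --- this is the only place where the hypothesis $\int_0^R\gamma\,dr=\infty$ is genuinely used --- and, crucially, it must not reintroduce a $\log\gamma(r)$ into the error, which would spoil the coefficient $1+\delta$; this is where the precise form of the Calculus Lemma matters. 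Apart from that, the argument is the classical Poisson--Jensen estimate for $m(r,f'/f)$, with the error term reprocessed through $(\ref{cs})$ so as to appear as $(1+\delta)\log\gamma(r)+\delta\log r+O(\log T_f(r))$; for $R=\infty$ one has $c_f=0$ and, taking $\gamma=e^{\epsilon T_f}$, this recovers the usual Logarithmic Derivative Lemma, with the extra $\epsilon\log r$ refinement.
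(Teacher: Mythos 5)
Your reduction to (\ref{cs}) via $\log^+x\le p^{-1}\log(1+x^{p})$ and concavity is fine, but the step you yourself identify as the main obstacle --- the bound $\log^+T_{\Gamma}(r)=O(\log T_f(r))$ for $\Gamma=|f'/f|^{p}$, outside a set of finite $\gamma$-measure --- is a genuine gap, and the Poisson--Jensen/Borel route you sketch cannot close it at the level of generality of Theorem \ref{l}. First, no bound of $T_{|f'/f|^{p}}(r)$ purely in terms of $T_f(r)$ at the same radius can hold, because the number $n_f(r,0)+n_f(r,\infty)$ entering your Blaschke-type estimate is only controlled by $T_f(\rho')$ at a strictly larger radius; so a two-radius comparison is unavoidable in your scheme. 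But the radii you need may not exist: you require $\rho<\rho'<R$ with $(\rho-r)^{-1}\le\mathrm{poly}(T_f(r))$, and when $R<\infty$ and $T_f$ grows slowly --- e.g.\ $T_{f}(r)\asymp\log\frac{1}{1-r}$, which is exactly the regime of the hyperbolic-leaf maps motivating this paper --- the constraint $\rho<R$ forces $(\rho-r)^{-1}>(R-r)^{-1}\ge e^{T_f(r)/C}$, so the inevitable $\log\frac{1}{\rho-r}$ contributes a term of order $T_f(r)$, not $O(\log T_f(r))$; trading it against $\log\gamma(r)$ via the failure-set estimate $\int_{\{\gamma^{\epsilon'}<(R-r)^{-C'}\}}\gamma\,dr<\infty$ requires $\epsilon'>C'$ with $C'$ at least of the size of the exponent of $(\rho-r)^{-1}$ in $T_{\Gamma}$, and this inflates the coefficient of $\log\gamma(r)$ well beyond the claimed $1+\delta$. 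Second, the exceptional-set bookkeeping: the Calculus Lemma gives only the pointwise bound $h'(r)\le h^{1+\delta}(r)\gamma(r)$ at good radii and does not compare $T_f$ at two different radii; the classical Borel-type choice of $\rho(r)$ yields an exceptional set of finite Lebesgue (or logarithmic) measure, which for an arbitrary weight $\gamma$ need not have finite $\gamma$-measure. So your closing sentence (``invoke the Calculus Lemma applied to $h=T_f$ to compare the characteristic at nearby radii'') is precisely the unproved crux.

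The paper's proof is designed to avoid any second radius. Instead of $|f'/f|^{p}$ it takes for $\Gamma$ the density of $f^{*}\Phi$, where
$\Phi=\frac{1}{(1+\log^{2}|w|)|w|^{2}}\,\frac{\sqrt{-1}}{4\pi^{2}}\,dw\wedge d\bar w$
is a singular $(1,1)$-form of total mass $1$ on $\mathbb{P}^{1}(\mathbb{C})$. By the change of variables formula and the First Main Theorem,
$T_{\Gamma}(r)=\int N_{f}(r,w)\,\Phi(w)\le T_{f}(r)+O(1)$
unconditionally, so (\ref{cs}) applies with $T_{\Gamma}$ replaced by $T_{f}$ and no comparison of radii, no count of zeros and poles, and no Borel lemma are needed; the factor $1+\log^{2}|f|$ is then removed at the cost of $\int\log^{+}(1+(\log^{+}|f|+\log^{+}(1/|f|))^{2})\frac{d\theta}{2\pi}=O(\log T_{f}(r))$, again by concavity and the First Main Theorem. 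If you want to salvage your argument, replace $|f'/f|^{p}$ by this logarithmically weighted density (the choice of weight is exactly what makes $T_{\Gamma}\le T_{f}+O(1)$ true); as written, your proposal establishes at best a weaker inequality, with an enlarged constant in front of $\log\gamma(r)$, and only under growth hypotheses on $T_{f}$ that the theorem does not assume.
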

\begin{proof}  For $w\in {\Bbb C}$, we define the $(1,1)$ form on ${\Bbb C}$ with singularities at $w=0, \infty$:
$$\Phi={1\over (1+\log^2 |w|)|w|^2}{\sqrt{-1}\over 4\pi^2}dw\wedge d\bar{w}.$$

 The form $\Phi$ is of integral 1. By the change of variable formula,
$$\int_{\bigtriangleup(t)} f^*\Phi = \int_{w\in {\Bbb C}}  n_{f}(t, w) \Phi(w).$$
Thus, defining
 $\mu(r):= \int_0^r {dt\over t} \int_{\bigtriangleup(t)} f^*\Phi$, we have
\begin{eqnarray*}
 \mu(r)&=& \int_0^r {dt\over t} \int_{\bigtriangleup(t)} {|f'|^2\over (1+\log^2 |f|)|f|^2}{\sqrt{-1}\over 4\pi^2}dz\wedge d\bar{z}\\
&=& \int_{w\in {\Bbb C}} \int_0^r {dt\over t} n_{f}(t, w) \Phi(w)
=  \int_{w\in {\Bbb C}} N_{f}(r, w) \Phi(w) \leq T_f(r)+O(1)
\end{eqnarray*}
where the last inequality holds as a consequence of  the  First Main Theorem. 
Using the observation (\ref{cs}) (or Lemma \ref{calculus}) we get
$${1\over 2\pi}  \int_{|z|=r}  {|f'|^2\over (1+\log^2 |f|)|f|^2} {d\theta\over 2\pi} 
\leq {1\over 2} r^{2\delta}\cdot  \gamma^{2+2\delta}(r)  \cdot T^{(1+2\delta)^2}_f(r) $$
 outside a set $E\subset (0, 1)$ with $\int_E  \gamma(r)  dr <\infty$.
By making use of this, the Calculus lemma and the concavity of the logarithm function,
we carry out the following classical computations, except for the error term:
\begin{eqnarray*}
\lefteqn{\int_0^{2\pi} \log^+\left|{f'\over f}(re^{i\theta})\right|{d\theta\over 2\pi} }\\
&=& {1\over 2} \int_{|z|=r} \log^+ \left({|f'|^2\over (1+\log^2 |f|)|f|^2}
((1+\log^2 |f|)\right){d\theta\over 2\pi}\\
&\leq &  {1\over 2} \int_{|z|=r} \log^+ \left({|f'|^2\over (1+\log^2 |f|)|f|^2}\right){d\theta\over 2\pi} \\
&~&+ {1\over 2} \int_{|z|=r} \log^+(1+(\log^+|f|+\log^+(1/|f|))^2) {d\theta\over 2\pi}\\
&\leq& {1\over 2} \int_{|z|=r} \log  \left(1+ {|f'|^2\over (1+\log^2 |f|)|f|^2}\right){d\theta\over 2\pi}\\
&~&+ \int_{|z|=r} \log^+(\log^+|f|+\log^+(1/|f|)){d\theta \over 2\pi}+{1\over 2}\log 2\\
&\leq& {1\over 2}\log\left(1+
 \int_{|z|=r}  {|f'|^2\over (1+\log^2 |f|)|f|^2}{d\theta\over 2\pi}\right)\\
&~&+ \int_{|z|=r} \log(1+\log^+|f|+\log^+(1/|f|)){d\theta\over 2\pi} +{1\over 2}\log 2\\
&\leq& {1\over 2}\log\left(1+\pi  r^{2\delta}\cdot  \gamma^{2+2\delta}(r)  \cdot T^{(1+2\delta)^2}_f(r)
 \right)\\
&~&+ \log\left(1+m_f(r, \infty)+m_f(r, 0)\right)+ {1\over 2}\log 2\\
&\leq& {1\over 2} \log \left(1+\pi r^{2\delta}\cdot  \gamma^{2+2\delta}(r) \cdot T^{(1+2\delta)^2}_f(r)\right)
+ \log^+T_f(r) + O(1)\\
&\leq&  (1+\delta) \log \gamma(r)+ \delta \log r + O(\log T_f(r))
\end{eqnarray*}
holds outside a set $E\subset (0, R)$ 
with 
$\int_E  \gamma(r) dr <\infty$.
This proves the theorem.
\end{proof}
We actually need to estimate the higher order 
derivatives.
\begin{theorem}\label{l2} Let $0<R\leq \infty$ and  let $\gamma(r)$ be a function defined 
on $(0, R)$ with $\int_0^R \gamma(r) dr=\infty$. 
Let $f(z)$ be a meromorphic function on  $\bigtriangleup(R)$. 
Then for $k\ge 1$  and $\delta>0$ (small enough),  the inequality
\begin{eqnarray*}
\int_0^{2\pi} \log^+\left|{f^{(k)}\over f}(re^{i\theta})\right|{d\theta\over 2\pi} 
&\leq& (1+\delta)k \log \gamma(r) +\delta k\log r\\
&~&+ O(\log T_f(r)+\log\log  \gamma(r)+\log\log r)
\end{eqnarray*}
holds outside a set $E\subset (0, R)$ 
with 
$\int_E \gamma(r) dr <\infty$.
\end{theorem}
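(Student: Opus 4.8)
The plan is to prove Theorem \ref{l2} by induction on $k$, reducing everything to the first–order case. For $k=1$ the assertion is precisely Theorem \ref{l} (the extra $\log\log$ terms only enlarge the error, so there is nothing to check). Suppose the estimate is known at level $k-1$. Setting $g:=f^{(k-1)}$ and using the factorization
$$\frac{f^{(k)}}{f}=\frac{g'}{g}\cdot\frac{f^{(k-1)}}{f},\qquad \log^+|ab|\le\log^+|a|+\log^+|b|,$$
we obtain
$$\int_0^{2\pi}\log^+\Bigl|\frac{f^{(k)}}{f}(re^{i\theta})\Bigr|\frac{d\theta}{2\pi}\le\int_0^{2\pi}\log^+\Bigl|\frac{g'}{g}(re^{i\theta})\Bigr|\frac{d\theta}{2\pi}+\int_0^{2\pi}\log^+\Bigl|\frac{f^{(k-1)}}{f}(re^{i\theta})\Bigr|\frac{d\theta}{2\pi}.$$
The second integral is exactly the left–hand side of the estimate at level $k-1$, hence is bounded by the induction hypothesis. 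To the first integral I apply Theorem \ref{l} with the same $\gamma$ to the meromorphic function $g$, obtaining the bound $(1+\delta)\log\gamma(r)+\delta\log r+O(\log T_g(r))$ for $r$ outside a set of finite $\gamma$–integral. Since $(1+\delta)(k-1)+(1+\delta)=(1+\delta)k$ and $\delta(k-1)+\delta=\delta k$, the main terms add up correctly and $\delta$ does not deteriorate along the induction; the exceptional set at level $k$ is the union of the one at level $k-1$ and the one produced by Theorem \ref{l} applied to $g$, and both have finite $\gamma$–integral.

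The one genuine point is to absorb $O(\log T_g(r))$ into $O(\log T_f(r)+\log\log\gamma(r)+\log\log r)$. For this I estimate the Nevanlinna characteristic $T(r,g)=m(r,g)+N(r,g)$ of $g=f^{(k-1)}$, which differs from $T_{g,\omega_{FS}}(r)$ by $O(1)$. A pole of order $m$ of $f$ is a pole of order $m+k-1$ of $f^{(k-1)}$, so $N(r,g)\le k\,N(r,f)\le k\,T_f(r)+O(1)$ by the First Main Theorem; and $m(r,g)\le m(r,f)+\int_0^{2\pi}\log^+|f^{(k-1)}/f(re^{i\theta})|\,\tfrac{d\theta}{2\pi}$, where the last term is controlled by the induction hypothesis at level $k-1$. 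Combining these and using $\log(a+b)\le\log a+\log b+O(1)$ for $a,b\ge1$ (iterated finitely often, so that logarithms of $\log r$, $\log T_f$, etc.\ collapse into $\log\log r$, $\log\log T_f$, which are absorbed), one gets
$$\log T_g(r)=O\bigl(\log T_f(r)+\log\log\gamma(r)+\log\log r\bigr)$$
for $r$ outside the relevant exceptional set, which is precisely what is needed to close the induction. (The degenerate case in which $f$ is a polynomial of degree $<k$ is trivial, since then $f^{(k)}\equiv0$ and the left–hand side vanishes; if $T_g(r)$ stays bounded the term $O(\log T_g(r))$ is harmless anyway.)

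The main obstacle is therefore not any single hard estimate but the bookkeeping: at every stage of the induction one must verify that the error keeps the prescribed shape and that no logarithm of $\log\gamma$ or of $\log r$ ever acquires a coefficient exceeding the allowed one. This is exactly why the crude bound $N(r,f^{(k-1)})\le k\,N(r,f)$ suffices and why only the \emph{logarithm} of $T_g$ — not $T_g$ itself — enters the error through Theorem \ref{l}. Assembling the two integral estimates above completes the inductive step and proves Theorem \ref{l2}.
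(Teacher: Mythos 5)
Your proof is correct and follows essentially the same route as the paper: both telescope $f^{(k)}/f$ into consecutive logarithmic derivatives, apply Theorem \ref{l} at each stage, and then control $\log T_{f^{(j)}}(r)$ by bounding the characteristic of the derivative (poles via $N(r,f^{(j)})\le C\,N(r,f)$, proximity via the lower-order logarithmic-derivative estimate), which is exactly where the $\log\log\gamma$ and $\log\log r$ terms arise. The only difference is bookkeeping: you bound $T_{f^{(k-1)}}$ directly against $T_f$ and the level-$(k-1)$ estimate, while the paper descends one derivative at a time via $T_{f^{(j-1)}}\le 2T_{f^{(j-2)}}+\cdots$; both are equivalent.
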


\begin{proof} Note that 
$${f^{(k)}\over f} = {f^{(k)}\over f^{(k-1)}}{f^{(k-1)}\over f^{(k-2)}}\dots {f'\over f}$$
hence, by using  Theorem \ref{l}, 
$$\int_0^{2\pi} \log^+\left|{f^{(k)}\over f}(re^{i\theta})\right|{d\theta\over 2\pi} 
\leq \sum_{j=1}^k \log^+\left|{f^{(j)}\over f^{(j-1)}}(re^{i\theta})\right|{d\theta\over 2\pi}$$
$$\leq (1+\delta) k\log \gamma(r) + \delta k\log r + O\left(\sum_{j=1}^k \log T_{f^{(j-1)}}(r)\right)$$
holds outside a set $E\subset (0, R)$ 
with 
$\int_E  \gamma(r) dr <\infty$.
On the other hand, 
\begin{eqnarray*}
&~&T_{f^{(j-1)}}(r) = m_{f^{(j-1)}}(r, \infty) + N_{f^{(j-1)}}(r, \infty)\\
&\leq& m_{f^{(j-1)}/f^{(j-2)}}(r, \infty) + m_{f^{(j-2)}}(r, \infty) + 2T_{f^{(j-2)} }(r)+O(1)\\
&\leq&\int_0^{2\pi} \log^+\left|{f^{(j-1)}(re^{i\theta})\over 
f^{(j-2)}(re^{i\theta})}\right| {d\theta\over 2\pi} + 2T_{f^{(j-2)}}(r) + O(1)\\
&\leq&  (1+\delta)\log \gamma(r) + \delta \log r + O( \log T_{f^{(j-2)}}(r)) +  2T_{f^{(j-2)}}(r)
\end{eqnarray*}
holds outside a set $E\subset (0, R)$ 
with 
$\int_E  \gamma(r) dr <\infty$.
 The theorem is proved by induction.
\end{proof}
We now extend the above theorem to jet differentials.   Jet bundles are generalizations of tangent bundles. 
Kobayashi attributes the introduction of the concept of jets and jet bundles to Ehresmann. We refer to \cite{GreenG}, Kobayashi's book \cite{Koba} and Demailly's survey paper \cite{Dem}. See also \cite {PS}.
Let $X$ be a complex manifold with $\dim X=n$. 
Let $x\in X$ and consider the germs of 
holomorphic mappings $\phi: \bigtriangleup(1)  \rightarrow X$ with 
$\phi(0)=x$. Two germs $\phi, {\tilde \phi}$ osculate to order $k$  (denote it as $\phi \sim^k   {\tilde \phi}$)
if $\phi^{(i)}(0)=  {\tilde \phi}^{(i)}(0)$, for  ${0 \leq i \leq k}$.
Let $j_k(\phi)$ denote the equivalence class of $\phi$ and set
$$J_k(X)_x=\{j_k(\phi)~|~\phi: ( \bigtriangleup, 0) \rightarrow (X, x)\}.$$
Clearly $J_k(X)_x = {\Bbb C}^{nk}$, i.e. every element $v \in J_k(X)_x$ is represented by 
$({d^j\over d\zeta ^j} (z^i \circ \phi)(0))_{1 \leq  j
 \leq k, 1 \leq  i  \leq  n}$ for some holomorphic 
map $\phi$ from an open neighborhood  $U$  of  $0$  in  
${\Bbb  C}$ to $M$ such that  $\phi(0) = x$. Of course this isomorphism depends on the choice of 
local coordinates $z^1, \dots, z^n$. 
Let $J_k(X)=\cup_{x\in U} J_k(X)_x$. Locally
 $J_k(U)=U\times {\bf C}^{kn}$, so $J_k(X)$ is a complex manifold of dimension $n+nk$. 
 For a holomorphic map  $f: \bigtriangleup(R)  \rightarrow X$,  at each point $z\in \bigtriangleup$, the map 
$f$ has a jet in $J_k(X)_{f(z)}$, denoted by $j_kf(z)$. 
The notation $j_k(f):  \bigtriangleup(R) \rightarrow J_k(X)$ will be used to denote the
natural lifting of $f$ to $k$-jet. 
The $1$-jet bundle  $J_1(X)$  is simply the tangent bundle of  $M.$ 
For $k>1$, $J_k(X)$ is no longer a vector bundle, just a {\it holomorphic fiber bundle}, i.e.
$J_k(X)$ is a complex analytic space with a natural projection $p: J_k(X)\rightarrow X$ with 
$p^{-1}(U)=U\times {\Bbb C}^{nk}$.

When $X$ is an analytic set, we can consider the space $J_k(RegX).$ Let $G_k$ denote the 
group of $k-$jets of biholomorphisms of $({\Bbb C},0)$. One can consider the space
$J_k(RegX)/G_k$ following \cite{Dem}, one can construct a compactification $X_k$ of this space. There is a natural projection $\pi_k: X_k\to X$ , the fiber at a non-singular point is a rational manifold.  See  \cite{Dem} for more details.

Let $x\in X$ and let $z^1, \dots, z^n$ be a local coordinate of $X$ centered at $x$. We consider the symbols $$dz^1, \dots, dz^n, d^2z^1, \dots, d^2z^n, \dots, d^kz^1, \dots, d^kz^n$$
and we say that the weight of the symbol $d^pz^i$ is equal to $p$, for any $i=1, \dots, n$. A (Green-Griffiths) 
 {\it jet differential} of order $k$ and degree $m$ at $x$ is a homogeneous polynomial of weighted degree $m$ in $(d^pz^i)_{p=1, \dots, k, i=1\dots, n}$, when $d^pz_j$ is given the weight $p$. We denote 
$E_X^{k, m}$ the set of (Green-Griffiths)  jet differentials of total weight $m$ and order $k$. 

Let $D=Y_1+\cdots +Y_l$ be an effective
divisor, such that the pair $(X, D)$ is log-smooth (this last condition
means that the hypersurfaces $Y_j$ are non-singular, and that they have
transverse intersections). 
A {\it jet differential} of order $k$ and degree $m$ with possible log-pole along $D$ is locally  a homogeneous polynomial of weighted degree $m$ in $d^p\log z^1, \dots, d^p\log z^d, d^pz^{d+1}, \dots, d^pz^n$
where $p=1, \dots, k$ and $z^1\cdots z^d=0$ is a local defining equation of the divisor $D$.
We denote $E_X^{k,m}(\log D)$ the set of  jet differential of order $k$ and degree $m$ with possible log-pole along $D$.  

The  Logarithmic Derivative Lemma is extended to the jet differentials with possible log-pole along $D$ as follows.
\begin{theorem}[Logarithmic derivative lemma for jet differentials]\label{log} Let $X$ be a complex projective  manifold and let $D$ be a  divisor on $X$  such that the pair  $(X, D)$ is log-smooth.  Let $A$ be an ample divisor on $X$ and 
$\omega_A$ be its curvature form.
Let $\mathcal P$ be a logarithmic $k$-jet differential along $D$ on $X$ (of degree $m$).
Let  $f: \bigtriangleup(R)\rightarrow X$ be a holomorphic map such that $f( \bigtriangleup(R))\not\subset D$. Let $\xi(z):=\mathcal P(J_k(f))(z)$ which is a meromorphic function on  $\bigtriangleup(R)$.
Assume that $c_{f, \omega_A}<\infty$. 
 Then, for $\epsilon>0$,  the inequality 
$$\int_{0}^{2\pi} \log^+ |\xi(re^{i\theta})|
\frac{d\theta}{2\pi}
\leq C( (c_{f, \omega_A}+\epsilon)T_{f, A}(r)
      +\epsilon \log r+ \log T_{f, A}(r))$$
 holds outside a set $E\subset (0, R)$ 
with $\int_E  e^{(c_{f, \omega_A}+\epsilon)T_{f, A}(r)}  dr <\infty$, where $C>0$ is a constant.
 \end{theorem}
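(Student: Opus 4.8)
The plan is to reduce the inequality to the higher-order logarithmic derivative lemma, Theorem~\ref{l2}, by writing $\xi=\mathcal P(J_k(f))$ locally as a polynomial in the logarithmic derivatives $g_s^{(q)}/g_s$ ($q\le k$) of finitely many meromorphic functions $g_1,\dots,g_M$ on $\bigtriangleup(R)$ whose Nevanlinna characteristics are $O(T_{f,A}(r))$. If $f$ is constant there is nothing to prove (then $\mathcal P(J_k(f))\equiv0$ when $m\ge1$), so assume $f$ non-constant. Fix an integer $c_0>0$ such that $c_0A$ is very ample and $\mathcal O_X(c_0A-Y_j)$ is globally generated for every irreducible component $Y_j$ of $D$, and use the embedding $X\hookrightarrow\mathbb P^N$ given by $|c_0A|$ with homogeneous coordinates $x_0,\dots,x_N$. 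By compactness, cover $X$ by finitely many relatively compact coordinate charts $U_\alpha$ such that $\overline{U_\alpha}$ lies in an affine chart $\{x_{j_\alpha}\ne0\}$ and the local coordinates $z^1_\alpha,\dots,z^n_\alpha$ on $U_\alpha$ are restrictions of global rational functions on $X$ with pole divisor $\le c_0A$: for the non-logarithmic directions one may take these among the $x_i/x_{j_\alpha}$, while if $U_\alpha$ meets $D$ the coordinates $z^1_\alpha,\dots,z^d_\alpha$ cutting out the branches of $D$ are taken to be quotients of sections of $\mathcal O_X(c_0A)$, the numerator vanishing along the corresponding branch and the denominator non-vanishing on $\overline{U_\alpha}$. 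In particular every $z^i_\alpha$ is bounded on $\overline{U_\alpha}$. Let $g_1,\dots,g_M$ be the finitely many meromorphic functions $z^i_\alpha\circ f$ on $\bigtriangleup(R)$; since their polar divisors are at most $c_0\,f^*A$, the First Main Theorem gives $T_{g_s}(r)\le c_0T_{f,A}(r)+O(1)$.

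Next I would compute $\xi$ chart by chart. On $U_\alpha$ the jet differential $\mathcal P$ is a homogeneous polynomial of weighted degree $m$ in the symbols $d^p\log z^i_\alpha$ ($1\le i\le d$) and $d^pz^j_\alpha$ ($d<j\le n$), $1\le p\le k$, with holomorphic coefficients, necessarily bounded on $\overline{U_\alpha}$. Evaluating on $J_k(f)$ with the fixed disc coordinate $z$: each $d^p\log z^i_\alpha$ becomes a universal polynomial in $\{(z^i_\alpha\circ f)^{(q)}/(z^i_\alpha\circ f)\}_{q\le p}$ with no bare factor, and each $d^pz^j_\alpha$ equals $(z^j_\alpha\circ f)\cdot\bigl((z^j_\alpha\circ f)^{(p)}/(z^j_\alpha\circ f)\bigr)$, where the bare factor is bounded at every point where $f$ takes values in $U_\alpha$. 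Consequently there is a constant $\tilde C>0$, independent of $\alpha$, such that
\[
\log^+|\xi(re^{i\theta})|\ \le\ \tilde C\sum_{s=1}^M\sum_{q=1}^k\log^+\Bigl|\tfrac{g_s^{(q)}}{g_s}(re^{i\theta})\Bigr|+\tilde C
\]
for every $\theta\in[0,2\pi)$: for each $\theta$ apply the bound in the chart containing $f(re^{i\theta})$, noting that the right-hand side does not depend on that chart.

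It then remains to integrate over $[0,2\pi)$ and apply Theorem~\ref{l2} to each $g_s$ and each order $q\le k$, with $\gamma(r):=\exp\bigl((c_{f,\omega_A}+\epsilon)T_{f,A}(r)\bigr)$. This $\gamma$ is admissible, i.e.\ $\int_0^R\gamma(r)\,dr=\infty$, because $c_{f,\omega_A}+\epsilon>c_{f,\omega_A}$ and $T_{f,A}$ is nondecreasing. Choosing the auxiliary parameter $\delta$ in Theorem~\ref{l2} small enough that the total $\delta$-contribution $\tilde C\sum_{s,q}\delta q\log r$ is at most $\epsilon\log r$, and using $\log\gamma(r)=(c_{f,\omega_A}+\epsilon)T_{f,A}(r)$ together with $T_{g_s}(r)=O(T_{f,A}(r))$ — so that the error terms $\log T_{g_s}(r)$, $\log\log\gamma(r)$, $\log\log r$ are all absorbed into $O(\log T_{f,A}(r))+O(\epsilon\log r)$ — summing over the finitely many pairs $(s,q)$ yields
\[
\int_0^{2\pi}\log^+|\xi(re^{i\theta})|\frac{d\theta}{2\pi}\ \le\ C\bigl((c_{f,\omega_A}+\epsilon)T_{f,A}(r)+\epsilon\log r+\log T_{f,A}(r)\bigr)
\]
for a single constant $C>0$, valid off the finite union $E$ of the exceptional sets produced by Theorem~\ref{l2}; since each of those satisfies $\int\gamma(r)\,dr<\infty$, so $\int_E\exp((c_{f,\omega_A}+\epsilon)T_{f,A}(r))\,dr<\infty$, as required.

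I expect the main obstacle to be the local-to-global organization in the first two steps rather than any hard analysis: one must choose the covering so that the branches of $D$ are cut out by restrictions of global rational functions subordinate to $A$, so that the logarithmic derivatives that appear are those of genuinely global meromorphic functions on the disc with characteristic $O(T_{f,A}(r))$, and at the same time keep the charts relatively compact, so that the ``bare'' coordinate factors produced by the non-logarithmic directions stay bounded. It is precisely this boundedness that prevents an uncontrolled $O(T_{f,A}(r))$ term with absolute constant from entering the estimate and leaves only the logarithmic-derivative contributions, which Theorem~\ref{l2} converts into the $(c_{f,\omega_A}+\epsilon)T_{f,A}(r)$ main term. (For small $r$ the various error terms are bounded and one simply enlarges $E$ accordingly, without affecting $\int_E\gamma(r)\,dr<\infty$.)
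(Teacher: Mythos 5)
Your proposal is correct and its analytic core is exactly the paper's: reduce to the logarithmic derivative lemma for meromorphic functions (Theorem \ref{l2}) applied with $\gamma(r)=\exp\bigl((c_{f,\omega_A}+\epsilon)T_{f,A}(r)\bigr)$ to the pullbacks under $f$ of finitely many global rational functions on $X$, using that such pullbacks have characteristic $O(T_{f,A}(r))$ and summing over a finite exceptional-set union. Where you differ is the localization device. The paper, following Siu--Yeung, embeds $X$ in $\mathbb{P}^N$ and takes the finite family $u_{j,\nu}=(w_\lambda/w_j)\circ A_t$ for generic automorphisms $A_t$ chosen so that $\bigcap_t A_t(Z)=\emptyset$; then \emph{logarithmic} coordinates $\log u_{j_1,\nu_1},\dots,\log u_{j_N,\nu_N}$ exist at every point, so $|f^*\mathcal P|$ is dominated by pure products of higher logarithmic derivatives of the $u_{j,\nu}\circ f$ with no bare coordinate factors and no case distinction between logarithmic and ordinary directions. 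You instead build a finite atlas of relatively compact charts whose coordinates are rational functions subordinate to $c_0A$, adapted so the branches of $D$ are cut out by some of them, write $d^pz^j_\alpha=(z^j_\alpha\circ f)\cdot\bigl((z^j_\alpha\circ f)^{(p)}/(z^j_\alpha\circ f)\bigr)$ in the non-logarithmic directions, and control the bare factor by boundedness on $\overline{U_\alpha}$. Both routes work; the paper's trick buys a cleaner pointwise bound (everything is a log-derivative, uniformly over $\mathbb{P}^N$), while yours is more self-contained in that it stays on $X$ and makes the adaptation to $D$ explicit, at the cost of the chart-construction bookkeeping (global generation of $\mathcal O_X(c_0A-Y_j)$, completing to coordinates, compactness). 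Two small points you should make explicit: the pointwise estimate is only needed for a.e.\ $\theta$ (the finitely many $\theta$ with $f(re^{i\theta})\in D$ or on other exceptional divisors are negligible, and for charts met by $f(\bigtriangleup(R))\setminus D$ the logarithmic coordinates composed with $f$ are not identically zero), and any non-logarithmic coordinate with $z^j_\alpha\circ f\equiv0$ contributes identically vanishing jet values, so those monomials are simply discarded rather than fed into Theorem \ref{l2}.
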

\begin{proof}  We follow the argument in \cite{SY2} (see also  \cite{Ru}, Theorem A7.5.4).
Since $X$ is projective, 
we can embed $X$ into a projective space ${\Bbb P}^N$ with homogeneous coordinates $[w_0:\cdots:w_N].$
Let $Z=\{\prod_{i=0}^N w_i=0\} \subset  {\Bbb P}^N$. Choose elements ${\tilde A}_t \in GL(N + 1,{\Bbb C})$, $0\leq t\leq N$
 such that $\cap_{t=0}^N A_t(Z)=\emptyset$, where $A_t: {\Bbb P}^N\rightarrow {\Bbb P}^N$  
 is the map induced by $\tilde A_t$.  Let
$$\{u_{j, \nu}\}_{0\leq j\leq N, 1\leq \nu \leq N(N+1)}:= \left\{{w_{\lambda}\over w_j}\circ A_t\right\}_{0\leq \lambda\leq N, \lambda\not=j, 0\leq t\leq N}.$$
Then for any point $P_0\in {\Bbb P}^N$ there exist $0\leq j_1, \dots, j_N \leq N, 1\leq \nu_1, \dots, \nu_N\leq N(N+1)$, 
such that one can choose local branches $\log u_{j_1, \nu_1}, \dots, \log u_{j_N, \nu_N}$
to form a local coordinate system of ${\Bbb P}^N$ at $P_0$.
 As a consequence there exists a positive constant $C$ such that 
$$ |f^*{\mathcal P}|\leq  C\sum_{j=0}^N \sum  \left|f^* \prod_{\nu=0}^{N(N+1)} (d^{\alpha_{j, \nu}} \log u_{j, \nu})^{\beta_{j, \nu}}\right|,$$
where the second summation $\sum$ is over the indices $\{\alpha_{j, \nu},   \beta_{j, \nu}\}_{1\leq \nu\leq N(N+1)}$, 
 with $\sum_{\nu=1}^{N(N+1)} \alpha_{j, \nu}   \beta_{j, \nu}=m$, $0\leq \alpha_{j, \nu}\leq k,    \beta_{j, \nu} \ge 0.$
 Since $f^*{\mathcal P}= \xi (d\zeta)^m$,  the above gives 
 $$\int_0^{2\pi} \log^+ |\xi(re^{i\theta})| {d\theta\over 2\pi} 
\leq  C' \sum_{h \in {\mathcal H}} \sum_{1 \leq s \leq k}\int_0^{2\pi} 
\log^+ \left|{(h\circ f)^{(s)}\over h\circ f}(re^{i\theta})\right|{d\theta\over 2\pi},$$
where $C' > 0$ is a constant, and ${\mathcal H}$ is the set $\{u_{j, \nu}\}$.
By applying Theorem \ref{l2} with $\gamma(r): = \exp((c_{f, \omega_A}+\epsilon)T_{f, A}(r))$, the inequality
\begin{eqnarray*}
 \int_{0}^{2\pi} \log^+ \left|{(h\circ f)^{(s)}\over h\circ f}(re^{i\theta})\right|
\frac{d\theta}{2\pi}
  &\leq&   (1+\epsilon)s (c_{f, \omega_A}+\epsilon)T_{f, A}(r)
      +\epsilon s\log r\\
&~&+ O(\log T_{h\circ f}(r)) \end{eqnarray*}
      holds outside a set $E\subset (0, R)$ 
with $\int_E  e^{(c_{f, \omega_A}+\epsilon)T_{f, A}(r)}  dr <\infty$.
Since $h$ is a rational function,
$$ \log T_{h \circ f}(r) \leq O(\log T_{f, A} (r)) $$
and we arrive at the estimate
\begin{eqnarray*}
\int_{0}^{2\pi} \log^+ |\xi(re^{i\theta})|
\frac{d\theta}{2\pi}
  &\leq& C( (c_{f, \omega_A}+\epsilon)T_{f, A}(r)
      +\epsilon \log r+ \log T_{f, A}(r)),
\end{eqnarray*}
for some constant $C>0$, where the inequality holds outside a set $E\subset (0, R)$ 
with $\int_E  e^{(c_{f, \omega_A}+\epsilon)T_{f, A}(r)}  dr <\infty$.
\end{proof}
As a corollary of the above Theorem, we get the following result. 
\begin{corollary}[Fundamental Vanishing Theorem]\label{f}
Let $X$ be a complex projective  manifold.
Let $f: \bigtriangleup(R)\rightarrow X$ be a  holomorphic map.  
Assume that  $f\in {\mathcal E}_0$, i.e  $\int_0^R exp(\epsilon T_{f, A}(r)) dr=\infty$ for any $\epsilon>0$ for some $($hence for any$)$ ample divisor $A$.
  Let   $\mathcal P$ be a holomorphic (or log-pole) $k$-jet differential (of degree $m$) on $X$ which
vanishes on an ample divisor $A$ of $X$ $($and the image of $f$ is disjoint from the
log-pole of $\mathcal P$$)$, i.e. $\mathcal P\in H^0(X, E_X^{k,m}\otimes {\mathcal O}(-A))$ or 
$\mathcal P\in H^0(X, E_X^{k,m}(\log D)\otimes {\mathcal O}(-A))$. Then $f^*\mathcal P$ is identically zero on $\bigtriangleup(R)$.
\end{corollary}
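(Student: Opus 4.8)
The plan is to play the upper bound furnished by the logarithmic derivative lemma for jet differentials (Theorem~\ref{log}) against the lower bound forced by the ampleness of $A$: the two are incompatible once $c_{f,\omega_A}=0$ unless $f^{*}\mathcal P$ vanishes. First I would clear away the trivial cases and arrange a convenient normalization. If $f$ is constant, then $f^{*}\mathcal P\equiv 0$ because every monomial of a jet differential of degree $m\geq 1$ involves some symbol $d^{p}z^{i}$ with $p\geq 1$; if $m=0$ then $\mathcal P\in H^{0}(X,\mathcal O(-A))=0$ since $-A$ is anti-ample. So assume $f$ non-constant and $m\geq 1$. Pick $l\gg 0$ with $\mathcal O(lA)$ globally generated. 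The $l$-th tensor power satisfies $\mathcal P^{\otimes l}\in H^{0}\big(X,E_{X}^{k,lm}(\log D)\otimes\mathcal O(-lA)\big)$, since the product of jet differentials of order $\leq k$ is again of order $\leq k$ (the degrees add) and $\mathcal O(-A)^{\otimes l}=\mathcal O(-lA)$; because it suffices to show $f^{*}(\mathcal P^{\otimes l})=(f^{*}\mathcal P)^{\otimes l}\equiv 0$, we may assume from now on that $\mathcal O(A)$ is globally generated, say by $s_{0},\dots,s_{M}\in H^{0}(X,\mathcal O(A))$ with no common zero. Since $f\in\mathcal E_{0}$ we have $c_{f,\omega_{A}}=0$, so Theorem~\ref{log} applies; and after translating the origin of $\bigtriangleup(R)$ if necessary we may assume $f^{*}\mathcal P$ does not vanish at $0$, for otherwise there is nothing to prove.

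Now the two estimates. Give $\mathcal O(A)$ the Fubini--Study type metric with local weight $\log\sum_{l}|s_{l}|^{2}$; its Chern form $\omega_{A}\geq 0$ represents $A$, and by the Green--Jensen formula $\frac{1}{2}\int_{0}^{2\pi}\log\big(\sum_{l}|s_{l}(f(re^{i\theta}))|^{2}\big)\frac{d\theta}{2\pi}=T_{f,A}(r)+O(1)$. For each $l$, the section $\mathcal P\otimes s_{l}$ is an honest logarithmic $k$-jet differential along $D$ of degree $m$, so $\xi_{l}(z):=(\mathcal P\otimes s_{l})(J_{k}(f))(z)$ is a meromorphic function on $\bigtriangleup(R)$, holomorphic because $f(\bigtriangleup(R))$ is disjoint from $D$; moreover $\sum_{l}|\xi_{l}|^{2}$ equals $\|f^{*}\mathcal P\|^{2}$ in this metric. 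Applying Theorem~\ref{log} to each $\mathcal P\otimes s_{l}$ with $c_{f,\omega_{A}}=0$ and combining the finitely many inequalities (using $\log^{+}\sum_{l}a_{l}\leq\sum_{l}\log^{+}a_{l}+O(1)$) gives, for every $\epsilon>0$,
$$\frac{1}{2}\int_{0}^{2\pi}\log\Big(\sum_{l}|\xi_{l}(re^{i\theta})|^{2}\Big)\frac{d\theta}{2\pi}\ \leq\ C\big(\epsilon\,T_{f,A}(r)+\epsilon\log r+\log T_{f,A}(r)\big)$$
outside a set $E_{\epsilon}$ with $\int_{E_{\epsilon}}e^{\epsilon T_{f,A}(r)}\,dr<\infty$. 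In the opposite direction, Poincar\'e--Lelong gives $dd^{c}\log\|f^{*}\mathcal P\|^{2}=D_{f^{*}\mathcal P}+f^{*}\omega_{A}\geq f^{*}\omega_{A}$, where $D_{f^{*}\mathcal P}\geq 0$ is the zero divisor of $f^{*}\mathcal P$; applying the operator $\int_{0}^{r}\frac{dt}{t}\int_{|z|<t}(\cdot)$ and the Green--Jensen formula yields
$$\frac{1}{2}\int_{0}^{2\pi}\log\|f^{*}\mathcal P\|^{2}(re^{i\theta})\frac{d\theta}{2\pi}\ =\ N_{f^{*}\mathcal P}(r)+T_{f,A}(r)+O(1)\ \geq\ T_{f,A}(r)+O(1).$$

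Finally, combine the two displays, recalling $\sum_{l}|\xi_{l}|^{2}=\|f^{*}\mathcal P\|^{2}$. Since $f$ is non-constant, $T_{f,A}(r)\to\infty$ as $r\to R$ and, $\omega_{A}$ being strictly positive, $T_{f,A}(r)\geq c_{1}\log r-O(1)$ for some $c_{1}>0$, so the term $\epsilon\log r$ is absorbed into $\epsilon'\,T_{f,A}(r)$ (it is simply bounded when $R<\infty$). Hence, choosing $\epsilon$ small enough in terms of $C$ and $c_{1}$, the combined inequality forces $\frac{1}{2}T_{f,A}(r)\leq C\log T_{f,A}(r)+O(1)$ for all $r$ near $R$ lying outside $E_{\epsilon}$. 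But $f\in\mathcal E_{0}$ means $\int_{0}^{R}e^{\epsilon T_{f,A}(r)}\,dr=\infty$, while $\int_{E_{\epsilon}}e^{\epsilon T_{f,A}(r)}\,dr<\infty$, so $(0,R)\setminus E_{\epsilon}$ is unbounded towards $R$; letting $r\to R$ along it contradicts $T_{f,A}(r)\to\infty$. Therefore $f^{*}\mathcal P\equiv 0$. The step that demands genuine care -- the main obstacle -- is the lower bound $\frac{1}{2}\int\log\|f^{*}\mathcal P\|^{2}\geq T_{f,A}(r)+O(1)$: one must untwist $\mathcal P$ by the generating sections (whence the reduction to $\mathcal O(A)$ globally generated), confirm that the resulting $\xi_{l}$ are bona fide holomorphic functions to which Theorem~\ref{log} literally applies, and check that their square sum is exactly $\|f^{*}\mathcal P\|^{2}$ for the metric whose curvature $\omega_{A}$ is the one entering Theorem~\ref{log} and the First Main Theorem.
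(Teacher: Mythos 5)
Your proof is correct, but it takes a genuinely different route from the paper. The paper argues by contradiction using its Second Main Theorem for the disc: embedding $X$ by $\phi_{lA}$, it invokes the Cartan-type theorem (with $c_f=0$) to find a hyperplane $H$ with $N_{\phi_{lA}\circ f}(r,H)\ge(1-\epsilon)T$, replaces $\mathcal P$ by $(\mathcal P/s_A)^l\phi_{lA}^*(\sum a_iw_i)$ so that the counting function of the divisor on which the jet differential vanishes is large, and then sandwiches $N_f(r,A)$ between $(1-\epsilon)T_{f,A}(r)$ and the Jensen integral of $\log|\xi|$, bounded by Theorem \ref{log}. You instead avoid the SMT altogether: untwisting by generating sections of $\mathcal O(lA)$, you identify $\sum_l|\xi_l|^2$ with $\|f^*\mathcal P\|^2$ for the metric induced by the sections, and the Poincar\'e--Lelong plus Green--Jensen identity gives the lower bound $\tfrac12\int\log\|f^*\mathcal P\|^2\ge T_{f,A}(r)+O(1)$ directly, which collides with the upper bound from Theorem \ref{log} once $c_{f,\omega_A}=0$. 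Your route is more self-contained (only Theorem \ref{log}, Green--Jensen and Poincar\'e--Lelong), and it sidesteps a point the paper glosses over, namely that $\phi_{lA}\circ f$ may be linearly degenerate so that ``Cartan's SMT'' really means the Nochka-weight version; the price is the metric bookkeeping that you correctly flag as the crux. Three small touch-ups: take $l$ with $lA$ very ample rather than merely globally generated, so that $\omega_A>0$ and $f^*\omega_A\not\equiv0$, making $T_{f,A}(r)\ge c_1\log r-O(1)$ legitimate; for $R<\infty$ the unboundedness of $T_{f,A}$ follows from $f\in\mathcal E_0$ (a bounded characteristic would make $\int_0^R e^{\epsilon T}dr$ finite), not from nonconstancy alone; and rather than ``translating the origin'' (which requires checking that $\mathcal E_0$ is preserved under precomposition with a disc automorphism), it is cleaner to use the standard correction term in Jensen's formula when $f^*\mathcal P$ vanishes at $0$, which only contributes $O(\log r)$ and is absorbed into your error term.
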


\noindent{\bf Remark}. We observe that  if $R=\infty$,  then $f$ is necessarily in ${\mathcal E}_0$ if $f$ is non-constant.
So the above result extends the  Fundamental Vanishing Theorem for maps defined in the complex plane ${\Bbb C}$.  See Green-Griffiths \cite{GreenG}, Siu-Yeung \cite{SY2} and Demailly's survey paper \cite{Dem}.
\begin{proof}  Assume that $f^*\mathcal P\not\equiv 0$, we will derive a contradiction.
 Choose a positive integer $l$ such that $lA$ is very ample. 
The canonical map $\phi_{lA}$ associated to $lA$ embeds $X$ into the  projective space  ${\Bbb P}^N({\Bbb C})$ with homogeneous 
coordinates $[w_0: \dots: w_N]$. By Cartan's Second Main Theorem, we conclude that for any $0< \epsilon <1$, 
there exists a hyperplane $H = \{[w_0: \cdots: w_N]~|~ \sum_{i=0}^N a_i w_i = 0\}$ such that 
$$N_{\phi_{lA}\circ f}(r, H) \ge (1-\epsilon)T_{\phi_{lA}\circ f}(r).$$
Let $s_A$ denote the canonical section of  of the line bundle associated to  $A$ (i.e. 
$[s_A=0]=A$). By replacing $\mathcal P$ by 
$\left({{\mathcal P}\over s_A}\right)^l\phi_{lA}^*(\sum_{i=0}^N a_i w_i)$
we can assume without loss of generality that $\ell = 1$ and  $A = \phi_{lA}^* H$ so we have 
\begin{equation}\label{m1}
N_f(r, A) \ge (1-\epsilon)T_{f, A}(r).
\end{equation}
Write $f^*\mathcal P(z) = \xi (dz)^{\otimes m}$.
Since $\mathcal P$ vanishes on $A$,  by  (\ref{m1}),  the Jensen formula and Theorem \ref{log} (noticing that $c_{f, \omega_A}=0$ under our assumption),
\begin{eqnarray*}
(1-\epsilon) T_{f, A}(r) &\leq& N_f(r, A) \leq \int_0^{2\pi} \log |\xi(re^{i\theta})| {d\theta\over 2\pi} \\
&\leq&  C (\epsilon T_{f, A}(r)
      +2\epsilon \log r+ \log T_{f, A}(r))
\end{eqnarray*}
 holds outside a set $E\subset (0, R)$ 
with $\int_E  e^{\epsilon T_{f, A}(r)}  dr <\infty$, which gives a 
contradiction by taking $\epsilon$ small enough.
\end{proof}
\section{Bloch's theorem and the Second Main Theorem for mappings into Abelian varieties}

\noindent{\bf A. Bloch Theorem}.

The following is a fundamental theorem in value distribution theory (see Bloch \cite{Bloch}, Siu \cite{Siu}, Noguchi-Ochiai \cite{NO}, and \cite{PS}, \cite{Ru}).
\begin{theorem}[Bloch] Let $A$ be an Abelian variety and let
  $f: {\Bbb C} \rightarrow A$ be a holomorphic map. Then the 
Zariski closure of $f({\Bbb C})$ is a translate of a sub-abelian variety. 
\end{theorem}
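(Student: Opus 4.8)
The plan is to derive Bloch's theorem from the Fundamental Vanishing Theorem (Corollary~\ref{f}), following the circle of ideas of Siu--Yeung and Demailly. If $f$ is constant there is nothing to prove, so assume $f$ is non-constant; since $R=\infty$, $f\in{\mathcal E}_0$ by the Remark after Corollary~\ref{f}. Set $X=\overline{f({\Bbb C})}^{\mathrm{Zar}}\subseteq A$. The first step is a standard reduction: the stabilizer $\mathrm{Stab}(X)=\{a\in A:a+X=X\}$ is a closed subgroup of $A$, so its identity component $B$ is a sub-abelian variety; replacing $A$ by $A/B$, $X$ by $X/B$ and $f$ by its composition with $A\to A/B$, and translating so that $0\in X$, we may assume $\mathrm{Stab}(X)$ is finite. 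It then suffices to prove $\dim X=0$, since in that case $X$ is a single fibre of $A\to A/B$, i.e.\ a translate of $B$.

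Assume for contradiction that $d:=\dim X\ge 1$, and let $\rho:\hat X\to X$ be a resolution of singularities; since $f({\Bbb C})$ is Zariski-dense it is not contained in $\rho(\mathrm{Exc}\,\rho)$, so $f$ lifts (using properness of $\rho$ together with Riemann's extension theorem) to a non-constant $\hat f:{\Bbb C}\to\hat X$, still in ${\mathcal E}_0$. I would then produce many jet differentials on $\hat X$ vanishing on an ample divisor. Here the special geometry of $A$ is essential: the jet bundle $J_k(A)\cong A\times V^k$ with $V=\mathrm{Lie}(A)$ is trivial, so each $E_{\hat X}^{k,m}$ is (birationally) a quotient of a trivial bundle; combined with Ueno's theorem --- when $\mathrm{Stab}(X)$ is finite the Gauss map $X\dashrightarrow\mathrm{Gr}(d,V)$ is generically finite, equivalently $\hat X$ is of general type --- this should yield, for a suitable order $k$ and all $m\gg 0$, finitely many elements $P_1,\dots,P_N\in H^0(\hat X,E_{\hat X}^{k,m}\otimes{\mathcal O}(-A_{\hat X}))$ with $A_{\hat X}$ ample, whose common zero set in the jet space projects onto a proper subvariety $Y\subsetneq\hat X$. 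This construction of jet differentials with small base locus is the main obstacle; everything else is formal.

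With such $P_i$ available, the Fundamental Vanishing Theorem applied to each $P_i$ gives $\hat f^*P_i\equiv 0$ for all $i$, so $j_k\hat f({\Bbb C})$ lies in the common zero set of the $P_i$; projecting, $\hat f({\Bbb C})\subseteq Y$, hence $f({\Bbb C})=\rho(\hat f({\Bbb C}))$ lies in the proper subvariety $\rho(Y)$ of $X$, contradicting the Zariski density of $f({\Bbb C})$ in $X$. Therefore $\dim X=0$ and $X$ is a translate of the sub-abelian variety $B$, which proves the theorem. Alternatively one could run Bloch's original argument, iterating Gauss maps and invoking the logarithmic derivative lemma (Theorems~\ref{l} and~\ref{l2}), but the jet-differential route above is cleaner given the machinery already developed.
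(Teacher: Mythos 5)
There is a genuine gap, and you have put your finger on it yourself: the assertion that Ueno's theorem (finite stabilizer $\Rightarrow$ $\hat X$ of general type) together with the triviality of $J_k(A)$ ``should yield'' jet differentials $P_1,\dots,P_N\in H^0(\hat X,E_{\hat X}^{k,m}\otimes\mathcal O(-A_{\hat X}))$ whose common zero locus projects into a proper subvariety $Y\subsetneq\hat X$ is not a formal step --- it is the entire analytic--algebraic core of Bloch's theorem. For a general variety of general type the existence of jet differentials with controlled base locus is exactly the hard (and in general open) part of the Green--Griffiths circle of problems; for subvarieties of abelian varieties it can be done, but only through a genuine argument (Bloch's wronskian-type construction, or the jet-space analysis of Siu/Demailly/P\u{a}un--Sibony), none of which you supply. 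So your proposal reduces the theorem to an unproven statement that is at least as hard as the theorem itself; everything before and after that step (the quotient by the identity component of the stabilizer, the lift through a resolution, the application of Corollary \ref{f}, the density contradiction) is indeed routine, but it does not carry the proof.

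The way the paper (in the proof of Theorem \ref{bloch}, which contains the classical statement as the case $R=\infty$, where $f$ and $\mathcal R\circ f$ are automatically in $\mathcal E_0$) avoids needing jet differentials on all of $\hat X$ with small base locus is worth noting, because it is the fix you want. One works not on $X$ but on $\mathcal X_k$, the Zariski closure of the image of the $k$-jet lift $j_k(f)$ inside $J_k^c(T)=T\times\mathbb P^{nk-1}$, and considers the projection $\tau_k:\mathcal X_k\to\mathbb P^{nk-1}$. There is a dichotomy: if for some $k$ the generic fibers of $\tau_k$ are finite, then $\tau_k^*\mathcal O_{\mathbb P^{nk-1}}(1)$ is big on $\mathcal X_k$, so for $m\gg0$ one gets a \emph{single} nonzero section of $\mathcal O_k(m)\otimes A^{-1}$ on $\mathcal X_k$ (Proposition \ref{b}); since $j_k(f)(\mathbb C)$ is Zariski dense in $\mathcal X_k$ by construction, this one section already contradicts the Fundamental Vanishing Theorem \ref{f} --- no control of a base locus on $X$ is needed, density of the jet lift replaces it. If instead the fibers of $\tau_k$ are positive dimensional for every $k$, Proposition \ref{a} (the Bloch-style argument taken from \cite{PS}) shows the stabilizer $A_X$ is positive dimensional, which after the Ueno reduction contradicts the finiteness of the automorphism group of a variety of general type. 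Your outline collapses to the paper's proof once you replace ``many jet differentials on $\hat X$ with small base locus'' by ``one jet differential on $\mathcal X_k$, available exactly when $\tau_k$ is generically finite,'' and add Proposition \ref{a} to handle the complementary case; as written, that complementary case and the construction you defer are both missing.
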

We extend the above result to mappings on the disc. We follow the strategy from Siu \cite{Siu}
 as carried out in \cite{PS} where ${\Bbb C}$ is replaced by a parabolic Riemann Surface.
We recall the following result due to Ueno \cite{Ueno}.
\begin{theorem}[Ueno]\label{ueno} Let $X$ be a subvariety of a complex torus $T$. Then
there exist a complex torus $T_1\subset T$, a projective variety $W$ and an
abelian variety $A$ such that

(1) We have $W\subset A$ and $W$ is a variety of general type;

(2) There exists a dominant (reduction) map ${\mathcal R}: X\rightarrow W$ whose
general fiber is isomorphic to $T_1$.
\end{theorem}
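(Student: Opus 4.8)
The plan is to produce $T_1$ as the connected component of the translation stabilizer of $X$, to descend $X$ to its quotient, and then to reduce the general-type assertion to the principle that a subvariety of a complex torus with finite translation stabilizer is of general type. First I would set $\mathrm{St}(X)=\{t\in T:\ t+X=X\}=\bigcap_{x\in X}(X-x)$, which is a closed complex analytic subgroup of $T$; let $T_{1}:=\mathrm{St}(X)^{\circ}$ be its identity component, so $T_{1}$ is a subtorus of $T$ and $\mathrm{St}(X)/T_{1}$ is finite. Since $T$ is compact, the quotient map $q\colon T\to A:=T/T_{1}$ is proper with fibre $T_{1}$, and by construction $q^{-1}(q(X))=X$; hence $W:=q(X)$ is an irreducible subvariety of $A$ (Remmert's proper mapping theorem) and ${\mathcal R}:=q|_{X}\colon X\to W$ is proper and surjective, with every fibre a coset $t+T_{1}\cong T_{1}$. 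This already gives conclusion $(2)$.

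Next I would check that $W$ has finite translation stabilizer in $A$: if $\bar a+W=W$ with $\bar a\in A$, lift $\bar a$ to $a\in T$, so that $q(a+X)=\bar a+W=W=q(X)$ and therefore $a+X\subseteq q^{-1}(W)=X$; since $X$ is irreducible and $\dim(a+X)=\dim X$, this forces $a+X=X$, i.e. $a\in\mathrm{St}(X)$ and $\bar a\in\mathrm{St}(X)/T_{1}$. Hence $\mathrm{St}(W)$ is finite. The core of the argument is then the implication ``finite stabilizer $\Rightarrow$ general type'': the translation-invariant holomorphic $1$-forms on $A$ restrict to sections of $\Omega^{1}_{W}$ generating it at every smooth point, so $\Omega^{1}_{W}$ is globally generated on $W_{\mathrm{reg}}$; the associated Gauss map $W_{\mathrm{reg}}\to\mathrm{Gr}(\dim W,\mathrm{Lie}(A))$, $w\mapsto T_{w}W$, has fibres that are (open subsets of) translates of the subtorus with the corresponding Lie algebra, so the finiteness of $\mathrm{St}(W)$ forces the Gauss map to be generically finite; combined with the global generation of $\Omega^{1}_{W}$, this makes $K_{W}=\det\Omega^{1}_{W}$ big, i.e. $W$ is of general type. (This is exactly Ueno's structure theorem; in the present paper one simply quotes it, a self-contained proof requiring the positivity theory of the cotangent sheaf of subvarieties of complex tori.)

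Finally, to obtain $(1)$ in the stated form I would observe that, being of general type, $W$ is Moishezon, and being a closed subvariety of the K\"ahler manifold $A=T/T_{1}$ it is therefore projective; after translating so that $0\in W$, the smallest subtorus $A_{0}\subseteq A$ containing $W$ is a quotient of the Albanese variety of $W$, hence an abelian variety, and $W\subseteq A_{0}$. Renaming $A_{0}$ as $A$ and absorbing the translation into ${\mathcal R}$ gives $W\subseteq A$ with $A$ abelian and $W$ of general type, which is $(1)$. (When $T$ is already an abelian variety this last step is automatic, since then $T/T_{1}$ is abelian.)

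The only non-formal ingredient is the passage from ``finite translation stabilizer'' to ``general type''; this is the heart of Ueno's theorem and the one place where the geometry of subvarieties of complex tori --- generic global generation of the cotangent sheaf and the ramification behaviour of the Gauss map --- genuinely enters, while the construction of $T_{1}$, the descent to $W$, and the projectivity clean-up are purely formal.
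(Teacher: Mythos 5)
The first thing to say is that the paper contains no proof of Theorem \ref{ueno}: it is quoted as a known result and attributed to Ueno's book \cite{Ueno}, so there is no internal argument to compare yours against. Judged on its own, your formal reduction is the standard one and is correct as far as it goes: $T_1=\mathrm{St}(X)^{\circ}$ is a subtorus with $\mathrm{St}(X)/T_1$ finite; since $T_1+X=X$ one has $q^{-1}(q(X))=X$, so $W=q(X)$ is an irreducible compact analytic subvariety of $T/T_1$ and every fibre of $\mathcal{R}=q|_X$ is a translate of $T_1$; the stabilizer of $W$ in $T/T_1$ is finite by the lifting argument you give; and once $W$ is known to be of general type (hence Moishezon), the Albanese of a resolution of $W$ is an abelian variety, so the subtorus of $T/T_1$ generated by a suitable translate of $W$ is abelian and contains $W$, which yields (1). (Your detour through ``Moishezon plus K\"ahler implies projective'' is unnecessary and delicate for singular $W$; projectivity of $W$ is automatic once $W$ sits inside the abelian variety $A_0$.)

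The step that carries all the content, however --- finite translation stabilizer implies general type --- is exactly Ueno's theorem, and you acknowledge that you are quoting it; moreover your Gauss-map sketch of it is too quick at the decisive point. The general fibre of the Gauss map is an analytic set whose tangent spaces at smooth points lie in a fixed subspace $V\subset\mathrm{Lie}(A)$, but for a general (possibly non-algebraic) complex torus $V$ need not be the Lie algebra of a subtorus, so the assertion that the fibres are ``open subsets of translates of the subtorus with the corresponding Lie algebra'' is not automatic; the genuine work (in Ueno's proof, or in the Griffiths--Harris/Kawamata treatments in the abelian case) is to show that a positive-dimensional Gauss fibre forces a positive-dimensional stabilizer, and one must also pass to a smooth model to make sense of bigness of the canonical class when $W$ is singular. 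None of this is fatal --- it is all in \cite{Ueno} --- but as written your proposal establishes the formal reduction while citing the heart of the theorem, which in effect is the same thing the paper does by simply referring to Ueno.
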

We now prove the following result.
\begin{theorem}\label{bloch}
Let $T$ be a complex torus and let  $f: \bigtriangleup(R) \rightarrow T$ be a non-constant holomorphic map in the space ${\mathcal E}_0$ $($i.e  $\int_0^R exp(\epsilon T_f(r)) dr=\infty$ for any $\epsilon>0$$)$.
Let $X$ be the Zariski closure of $f(\bigtriangleup(R))$. Then either 
 $X$  is the translate of a sub-torus of $T$, or
there is a variety of general type $W$ and map $\mathcal R: X\rightarrow W$ 
such that $\mathcal R\circ f$ does not belong to the space ${\mathcal E}_0.$
\end{theorem}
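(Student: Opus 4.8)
The plan is to adapt Siu's jet-differential proof of Bloch's theorem --- in the form carried out over a parabolic Riemann surface in \cite{PS} --- to the disc $\bigtriangleup(R)$. The essential observation is that the only places where the parabolicity of the source enters that argument are (i) the Fundamental Vanishing Theorem and (ii) the Logarithmic Derivative Lemma, and both are now available for maps of class $\mathcal E_0$ thanks to Corollary \ref{f} and Theorems \ref{log}, \ref{l2}; everything else is algebraic geometry about subvarieties of abelian varieties.

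First I would apply Ueno's theorem (Theorem \ref{ueno}) to $X\subset T$: write $T_1\subset T$ for the associated subtorus, $A=T/T_1$, and $\mathcal R\colon X\to W\subset A$ for the reduction map onto a projective variety $W$ of general type, with general fibre a coset of $T_1$. If $\dim W=0$, then $X$ is a single coset of $T_1$, i.e. a translate of a subtorus of $T$; this is the first alternative. So assume $\dim W\ge 1$; it then suffices to show that $g:=\mathcal R\circ f\notin\mathcal E_0$, which yields the second alternative. Since $f$ is non-constant with Zariski dense image $X$ and $\mathcal R$ is dominant, $g$ --- which becomes a genuine holomorphic map $\bigtriangleup(R)\to W$ after resolving the indeterminacy of $\mathcal R$ and lifting, this being possible because the image of $f$ is not contained in any proper subvariety --- is non-constant with Zariski dense image $W$. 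Assume, for contradiction, that $g\in\mathcal E_0$.

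Now comes the geometric heart of the argument. Pass to a smooth projective model $\widetilde W$ of $W$ (general type is a birational invariant) together with the Green--Griffiths/Demailly $k$-jet tower, and lift $g$ to $\widetilde g\colon\bigtriangleup(R)\to\widetilde W$; comparing heights under the proper modification $\widetilde W\to W\subset A$ shows that $\widetilde g$ is again in $\mathcal E_0$. Because $W$ is a positive-dimensional general type subvariety of the abelian variety $A$ --- equivalently, not a translate of a subtorus --- the structure theory of such subvarieties (Ueno and Kawamata, combined with the triviality $J_k(A)\cong A\times\mathbb{C}^{k\dim A}$ of the jet bundle of $A$; see \cite{Siu}, \cite{NO}, \cite{PS}) produces, for $k$ large and some ample divisor $A'$ on $\widetilde W$, a non-zero $k$-jet differential $\mathcal P\in H^{0}\bigl(\widetilde W,\,E^{k,m}_{\widetilde W}\otimes\mathcal O(-A')\bigr)$ vanishing on $A'$ with $\mathcal P\bigl(J_k(\widetilde g)\bigr)\not\equiv 0$; this is where the Zariski density of $\widetilde g$ is used. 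But $\widetilde g\in\mathcal E_0$, so Corollary \ref{f} (the Fundamental Vanishing Theorem) forces $\mathcal P\bigl(J_k(\widetilde g)\bigr)\equiv 0$ --- a contradiction. Hence $g=\mathcal R\circ f\notin\mathcal E_0$, as required.

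The main obstacle is the construction of the jet differential $\mathcal P$ that is not killed by the Zariski dense curve $\widetilde g$: one has to verify that Siu's construction for general type subvarieties of abelian varieties is purely algebro-geometric, so that it transfers verbatim, its only analytic ingredients being the vanishing theorem and the logarithmic-derivative estimates, which here are Corollary \ref{f} and Theorems \ref{log} and \ref{l2}. The remaining technical points --- resolving the singularities of $X$ and $W$ and the indeterminacy of $\mathcal R$, lifting $g$ while preserving Zariski density, and the stability of the class $\mathcal E_0$ under these operations --- are routine once one knows that $f$, and hence $g$, is Zariski dense.
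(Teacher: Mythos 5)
Your overall route (Ueno reduction, then jet differentials plus the Fundamental Vanishing Theorem applied to a map of class $\mathcal E_0$) is the paper's route, but the crucial step is asserted rather than proved, and as you state it, it is not a quotable fact. You claim that, because $W$ is a positive-dimensional general type subvariety of the abelian variety $A$, ``structure theory'' produces a jet differential $\mathcal P\in H^0\bigl(\widetilde W, E^{k,m}_{\widetilde W}\otimes\mathcal O(-A')\bigr)$ with $\mathcal P\bigl(J_k(\widetilde g)\bigr)\not\equiv 0$. There is no such curve-independent existence theorem to cite: whether the jet lift of your particular curve is annihilated by all such sections is exactly the point at issue. In the Bloch--Siu argument (and in the paper, via Proposition \ref{b}) the section is constructed not on $\widetilde W$ but on $\mathcal X_k$, the Zariski closure of $j_k(f)(\bigtriangleup(R))$ inside $T\times\mathbb{P}^{nk-1}$ --- an object that depends on the curve --- and only under the hypothesis that the projection $\tau_k\colon\mathcal X_k\to\mathbb{P}^{nk-1}$ has generically finite fibers for some $k$; then $\tau_k^*\mathcal O(1)|_{\mathcal X_k}$ is big, a section of $\mathcal O_k(m)\otimes A^{-1}$ on $\mathcal X_k$ exists, and its nonvanishing along $j_k(f)$ is automatic precisely because $\mathcal X_k$ is by definition the closure of that lift (this is the real use of Zariski density).

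What your proposal omits is the other half of the dichotomy, which cannot be avoided: if for every $k$ the fibers of $\tau_k$ are positive dimensional, no jet differential of the required kind is produced at all, and the contradiction is reached differently --- Proposition \ref{a} then shows that the stabilizer $A_X=\{a\in T\mid a+X=X\}$ is positive dimensional, which is impossible for a variety of general type since its automorphism group is finite. Your text never addresses this case; flagging the construction of $\mathcal P$ as ``the main obstacle'' and asserting that Siu's construction ``transfers verbatim'' because it is purely algebro-geometric does not close the gap, because the construction is not a statement about $W$ alone but about the pair consisting of $W$ and the curve, and in the degenerate case it genuinely fails and the stabilizer argument must take over. (The remaining reductions you mention --- resolving the indeterminacy of $\mathcal R$, passing to a smooth model, and checking that $\widetilde g$ stays in $\mathcal E_0$ --- are indeed routine.)
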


\noindent{\bf Remarks}.  (1)  The characteristic function $T_f(r)$ is defined by $T_f(r)=T_{f, \omega}(r)$ where 
$\omega=\pi_*(dw_1+\cdots +dw_m)$ where $\pi: {\Bbb C}^m\rightarrow T$ is the projection map.
(2) We observe that  if $R=\infty$,  then $f$ and $\mathcal R\circ f$ are necessarily in ${\mathcal E}_0.$
So the above result extends the classical Bloch's Theorem.

To prove Theorem \ref{bloch}, 
 let $n$  be the complex dimension of  $T$.  Let $J_k(T) =T\times  {\Bbb C}^{kn},$ and 
 $J^c_k(T) =T\times  {\Bbb P}^{nk-1}$.
Let $\mathcal X_k$ be the Zariski closure of  $j_k(f)(\bigtriangleup(R))$  in 
$J^c_k(T).$ 
Let $\tau_k : \mathcal X_k \rightarrow {\Bbb P}^{nk-1}$ be  the projection on
the second factor.
The proof relies on the following two Propositions whose idea goes back to Bloch \cite{Bloch} (see
also \cite{Dem} and \cite{PS}).
\begin{proposition}[See Proposition 5.3 in \cite{PS}]\label{a} Assume that the Zariski closure of $f$ is $X$.
We assume that for each $k \ge 1$  the fibers of $\tau_k$ are
positive dimensional. Then the dimension of the subgroup $A_X$ of $T$
defined by
$$A_X := \{a \in  T ~|~ a + X = X\}$$
is strictly positive.
\end{proposition}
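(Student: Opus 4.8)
This is Bloch's theorem localized to the disc, and the plan is to follow the argument of Siu \cite{Siu} as carried out in \cite{PS}; since every step below takes place on the target torus $T$ and uses only that $f$ is non-constant and (the disc being simply connected) lifts to the universal cover, the nature of the source plays no role. Write $T={\Bbb C}^n/\Lambda$, let $\pi:{\Bbb C}^n\to T$ be the quotient, and lift $f$ to a holomorphic map $F=(F_1,\dots,F_n):\bigtriangleup(R)\to{\Bbb C}^n$ (unique up to an additive constant, so the derivatives $F',F'',\dots$ are intrinsic to $f$). Under the canonical trivialization $J^c_k(T)=T\times{\Bbb P}^{nk-1}$ afforded by the group law one has $j_k(f)(z)=\bigl(f(z),\,[F'(z):\cdots:F^{(k)}(z)]\bigr)$, and $T$ acts on $J^c_k(T)$ by translation on the first factor. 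The starting observation is the standard reduction: $\dim A_X>0$ \emph{if and only if} there is a nonzero constant vector field $\partial_v=\sum_i v_i\,\partial/\partial w_i$ on ${\Bbb C}^n$ whose image in $T$ is tangent to $X$ at every smooth point, for such a field integrates (the target being a group) to a positive-dimensional subgroup of $T$ preserving $X$. So it is enough to produce such a $v$.

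\textbf{Exploiting the jet hypothesis.} For $k=1$ the picture is transparent: since $T_T$ is trivial, $J^c_1(T)=T\times{\Bbb P}^{n-1}$, the map $\tau_1$ is a piece of the projectivized Gauss map of $X$, and by the classical description of the Gauss map of a subvariety of an abelian variety its general fiber is a union of translates of $A_X^0$; hence $\tau_1$ has positive-dimensional fibers as soon as the tangent directions of $f$ are Zariski-dense in ${\Bbb P}(T_X)$ — which is exactly the point that can fail, and which forces one to use all orders $k$. In general I would argue as follows. Since $\mathcal{X}_{k+1}$ dominates $\mathcal{X}_k$ under the map forgetting the top derivative, the dimensions $\dim\mathcal{X}_k$ are eventually constant; fix $k_0$ beyond which this holds, put $\mathcal{X}:=\mathcal{X}_{k_0}$, and let $B:=\{a\in T\mid a+\mathcal{X}=\mathcal{X}\}$, a closed subgroup which projects into $A_X$; it then suffices to show $\dim B>0$. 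Here one uses that $\mathcal{X}$ is invariant under the flow of the total-derivative (jet) vector field — the one corresponding to the operator $D$ with $D\zeta^{(j)}_i=\zeta^{(j+1)}_i$ on the fiber coordinates and the invariant vector fields of $T$ on the base coordinates — because $j_kf$ is an integral curve of it and $\mathcal{X}$ is its Zariski closure, so the field is tangent to $\mathcal{X}$. Bloch's argument then shows that if the fibers of $\tau_k$ are positive-dimensional for \emph{every} $k$, the translation-stabilizer $B$ of $\mathcal{X}$ cannot be finite; the positive-dimensional $B$-orbits meeting $X$ then produce the constant vector field $v$ above, so $\dim A_X\ge\dim B>0$.

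\textbf{The main obstacle.} The crux is the last implication: passing from the numerical condition "$\dim\mathcal{X}_k>\dim\tau_k(\mathcal{X}_k)$ for all $k$" to an actual translation invariance of $\mathcal{X}$ (equivalently of $X$). Concretely, one must combine the $D$-invariance of $\mathcal{X}$ with the excess dimension of the fibers of the $\tau_k$ to force the stabilizer $B$ to be positive-dimensional; this is the heart of Bloch's method, carried out via a careful analysis of weights and orders in the jet algebra in \cite{Siu} and \cite{PS} (and ultimately going back to \cite{Bloch}, with the modern torus-theoretic organization resting on Ueno's Theorem \ref{ueno}). All of this is algebraic on $T$ and makes no use of $\bigtriangleup(R)$ beyond what was recorded at the outset, so I would invoke it from \cite{PS} rather than reproduce it.
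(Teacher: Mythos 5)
The paper itself offers no argument for this proposition: it is imported verbatim from \cite{PS} (Proposition 5.3 there), and the implicit justification is exactly the point you make explicitly — the statement and its proof live entirely on $T$, on $X$, and on the jet closures $\mathcal{X}_k\subset J^c_k(T)=T\times{\Bbb P}^{nk-1}$, so the nature of the source (disc, plane, or parabolic surface) is irrelevant once $f$ is non-constant. Since your proposal, too, ultimately rests on invoking \cite{PS}/\cite{Siu} for the decisive implication, at bottom you and the paper take the same route, namely a citation.

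That said, the bridge you do spell out contains a step that fails, and your reformulation of what is being cited is not quite what \cite{PS} proves. From the domination of $\mathcal{X}_k$ by $\mathcal{X}_{k+1}$ you only get $\dim\mathcal{X}_{k+1}\ge\dim\mathcal{X}_k$, and since the ambient dimension $n(k+1)-1$ of $J^c_k(T)$ grows with $k$ there is nothing forcing $\dim\mathcal{X}_k$ to become eventually constant; hence the reduction to a single $\mathcal{X}_{k_0}$ and to its translation stabilizer $B$ is unjustified (and showing $\dim B>0$ for one fixed $k_0$ is not what Bloch's method yields). The actual mechanism is in a sense dual to yours: for a suitably generic $z_0$ one looks at the fibers of $\tau_k$ through $j_kf(z_0)$, regarded as subvarieties $W_k\subset X\subset T$ (the second factor being frozen); these are positive-dimensional by hypothesis and \emph{decrease} with $k$, hence stabilize by the Noetherian property to a positive-dimensional $W\ni f(z_0)$. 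For $a\in W-f(z_0)$ every jet of the translated curve $a+f$ at $z_0$ then lies in the corresponding jet closure of $X$, so by the Taylor-series/identity-theorem argument (the same one used in the Key Lemma of Section 6) the image of $a+f$ lies in $X$; this gives $a+X\subset X$, hence $a+X=X$ by irreducibility, so $A_X\supset W-f(z_0)$ is positive-dimensional. Because you explicitly defer the crux to \cite{PS}, the citation carries your proof just as it carries the paper's; but the sketch as written would not stand on its own.
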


In the following statement we discuss the other possibility.
\begin{proposition}\label{b} Let $k$ be a positive integer such that the map
$\tau_k: \mathcal X_k\rightarrow  {\Bbb P}^{nk-1}$
has finite generic fibers. Then there exists a jet differential $\mathcal P$ of order
$k$ with values in the dual of an ample line bundle, and whose restriction
to $\mathcal X_k$ is non-identically zero.
\end{proposition}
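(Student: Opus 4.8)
\emph{Overall strategy.} I would build $\mathcal P$ by a dimension count on the jet space: the hypothesis forces $\mathcal X_{k}$ to be low dimensional, while the space of order-$k$ jet differentials on $T$ grows like a fixed power of the degree, so one of them can be arranged to vanish along a prescribed ample divisor and still not vanish identically on $\mathcal X_{k}$. The only structural input is the inequality $\dim\mathcal X_{k}\le nk-1$.

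\emph{Setup.} Since $T$ is a torus, its tangent bundle is trivial, so an order-$k$, degree-$m$ Green--Griffiths jet differential on $T$ is just a weighted-homogeneous polynomial of weighted degree $m$ in the constant jet coordinates $d^{p}z^{i}$ ($1\le p\le k$, $1\le i\le n$). Working (if necessary) with the weighted fibrewise projectivization in place of the ordinary $\mathbb{P}^{nk-1}$, these are precisely the global sections over $J^{c}_{k}(T)$ of the power $\mathcal O(m)$ of the tautological sheaf coming from the fibre factor, and their number $r_{m}$ is asymptotic to a positive constant times $m^{nk-1}$ as $m\to\infty$. The ample line bundle in the conclusion will be (a power of) $\tau_{k}^{*}\mathcal O(1)$: its restriction to $\mathcal X_{k}$ is ample --- at least big and nef --- because, by hypothesis, $\tau_{k}|_{\mathcal X_{k}}$ is (generically) finite onto its image $Y:=\tau_{k}(\mathcal X_{k})$.

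\emph{Construction.} Put $d=\dim\mathcal X_{k}$; generic finiteness gives $d=\dim Y\le nk-1$. Since the morphism $\mathcal X_{k}\to Y$ is dominant, pulling back sections of $\mathcal O_{Y}(m)$ is injective, so $\mathcal X_{k}$ receives at least $h^{0}(Y,\mathcal O_{Y}(m))\sim c\,m^{d}$ (with $c>0$, $m$ large) jet differentials of degree $m$ that are not identically zero on $\mathcal X_{k}$. Now fix a general hyperplane $H$ in the fibre projective space and set $A=\tau_{k}^{-1}(H)\cap\mathcal X_{k}$, a divisor on $\mathcal X_{k}$ with $\dim A=d-1$ and whose class is $\tau_{k}^{*}\mathcal O(1)|_{\mathcal X_{k}}$. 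Requiring a section to vanish along $A$ imposes at most $h^{0}(A,\tau_{k}^{*}\mathcal O(m)|_{A})=O(m^{d-1})$ linear conditions, and since $c\,m^{d}$ dominates $O(m^{d-1})$, for $m$ large there is a jet differential $\mathcal P$ of degree $m$ on $T$ that vanishes along $A$ yet has $\mathcal P|_{\mathcal X_{k}}\not\equiv0$. Dividing out the linear form cutting $H$ presents $\mathcal P$ as a section of $\mathcal O(m)\otimes\tau_{k}^{*}\mathcal O(-1)$, i.e.\ a jet differential of order $k$ with values in the dual of an ample line bundle and non-identically zero on $\mathcal X_{k}$; concretely one may take $\mathcal P=\ell^{a}Q$ with $\ell$ the linear form defining $H$ (a $1$-jet differential) and $Q$ a degree-$(m-a)$ jet differential chosen so that $Q|_{\mathcal X_{k}}\not\equiv0$.

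\emph{Main obstacle.} The genuine work is the bookkeeping compressed into the Setup step: matching the Green--Griffiths \emph{weighted} grading of jet differentials with honest sections of a single line bundle on the compactified jet space. This is why one should use the weighted fibrewise projectivization $\mathbb{P}(1^{n},2^{n},\dots,k^{n})$ (rather than the ordinary $\mathbb{P}^{nk-1}$ as written), or else carry weighted-homogeneous components along and extract at the end a component that still restricts non-trivially to $\mathcal X_{k}$. One must also make precise the asymptotic Riemann--Roch estimates $h^{0}(Y,\mathcal O_{Y}(m))\sim c\,m^{d}$ and $h^{0}(A,\tau_{k}^{*}\mathcal O(m)|_{A})=O(m^{d-1})$ on the possibly singular varieties involved and, when $\tau_{k}|_{\mathcal X_{k}}$ is merely generically finite, either pass to a model on which $\tau_{k}^{*}\mathcal O(1)$ is genuinely ample or check that a big and nef twist suffices for the subsequent use of $\mathcal P$. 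Beyond this the argument is formal, the single essential consequence of the hypothesis being $\dim\mathcal X_{k}\le nk-1$.
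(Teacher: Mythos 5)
Your argument is essentially the paper's: the paper observes that generic finiteness of $\tau_k|_{\mathcal X_k}$ makes $\mathcal O_k(1):=\tau_k^*\mathcal O_{\mathbb{P}^{nk-1}}(1)$ big on $\mathcal X_k$, hence $H^0(\mathcal X_k,\mathcal O_k(m)\otimes A^{-1})\neq 0$ for $m\gg0$ and any ample $A$; your section count (pull back $\sim c\,m^{d}$ sections from $Y=\tau_k(\mathcal X_k)$, note that vanishing along a divisor imposes only $O(m^{d-1})$ conditions) is precisely the proof of that bigness statement written out, so the core mechanism coincides. The one place you deviate is the choice of twist: subtracting $\tau_k^{-1}(H)$ only exhibits $\mathcal P$ with values in the dual of $\tau_k^*\mathcal O(1)|_{\mathcal X_k}$, which is big and nef but need not be ample when $\tau_k|_{\mathcal X_k}$ contracts something, whereas the statement (and its use via Corollary \ref{f}, which needs vanishing on an ample divisor) asks for a genuinely ample $A$. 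This is closed inside your own framework by imposing vanishing along an independently chosen ample Cartier divisor $A$ on $\mathcal X_k$ instead of $\tau_k^{-1}(H)\cap\mathcal X_k$: the same count gives $h^0(A,\mathcal O_k(m)|_A)=O(m^{d-1})<c\,m^{d}$, so the restriction map has nonzero kernel $H^0(\mathcal X_k,\mathcal O_k(m)\otimes\mathcal O(-A))$ for $m$ large, which is exactly the paper's conclusion; with that substitution (and your flagged, harmless bookkeeping between the weighted grading of jet differentials and sections of $\mathcal O_k(m)$, which the paper also passes over) your proof is complete.
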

\begin{proof} The hyperplane line bundle $\mathcal O_{{\Bbb P}^{nk-1}}(1)$
is ample, and since the generic fibers of $\tau_k$ are of dimension zero, the restriction to $\mathcal X_k$ of
the line bundle $\mathcal O_k(1): =\tau_k^* \mathcal O_{{\Bbb P}^{nk-1}}(1)$ is big. Hence, for $m>>0$  large enough, we have
$$H^0(\mathcal X_k, \mathcal O_k(m)\otimes A^{-1})\not =\emptyset,$$
which means that there exists a jet differential $\mathcal P$ of order
$k$ with values in the dual of an ample line bundle $A$, and whose restriction
to $\mathcal X_k$ is non-identically zero.
The proposition is proved.
\end{proof}

\noindent{\it Proof of Theorem \ref{bloch}}. 
  Let $X$ be the Zariski closure of $f$. 
Thanks to Ueno's result (Theorem \ref{ueno}),  we can
consider the reduction map $\mathcal R: X \rightarrow W$. We claim
that,  if 
$X$ is not a translate of a sub-torus, 
 then   $\mathcal R\circ f$  is not in the space ${\mathcal E}_0.$ If W is a point, this means that $X$ is the translate of a sub-torus.
If this is not the case, then we can assume that $X$ is of general type and $\mathcal R\circ f$ is in
${\mathcal E}_0.$
 If the hypothesis in  Proposition \ref{b}  is verified, then $ \mathcal X_k$ is algebraic and Corollary \ref{f}  gives a contradiction. So
the hypothesis of Proposition \ref{b} will never be verified for any $k\ge 1$.
Hence the hypothesis of the Proposition \ref{a} are verified, and so $X$
will be invariant by a positive dimensional sub-torus of $T$. Since $X$ is
assumed to be a manifold of general type, its automorphism group is
finite, so this cannot happen. 
This finishes the proof.

\noindent{\bf  B. The Second Main Theorem for Holomorphic Curves Into Abelian Varieties}. 

We prove the following result which generalizes the result of Siu-Yeung \cite{SY2} (see also \cite{NWY}, \cite{NW}). 
\begin{theorem} 
Let $A$ be an Abelian variety, and let $D$ be an ample  divisor on 
$A$. Let 
 $f: \bigtriangleup(R)\rightarrow A$ be a holomorphic map with Zariski dense image. Assume that $f\in {\mathcal E}_0$. Then there is a positive integer $k_0$ such that, for any $\epsilon>0$, 
 $$T_{f, D}(r)  \leq N_f^{(k_0)}(r, D) + \epsilon T_{f, D}(r)+ O(\log T_{f, D}(r))+\epsilon \log r $$
holds for $r\in (0, R)$ except for a set $E$ with $\int_E \exp(\epsilon T_{f, D}(r)) dr<\infty$.
  \end{theorem}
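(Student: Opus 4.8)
The plan is to carry the jet-differential method of Siu--Yeung \cite{SY2}, in the semi-abelian form due to Noguchi--Winkelmann--Yamanoi (\cite{NWY}, \cite{NW}), over to the disc; the one genuinely new analytic ingredient is the logarithmic derivative lemma for jet differentials (Theorem \ref{log}). The role of the hypothesis $f\in{\mathcal E}_0$ is precisely that it forces $c_{f,\omega_A}=0$ for every ample divisor $A$, so that Theorem \ref{log} controls the mean of $\log^+$ of any jet differential evaluated along $f$ by $C(\epsilon\,T_{f,A}(r)+\epsilon\log r+\log T_{f,A}(r))$ --- which, with $A$ replaced by $D$ (or anything comparable) and after rescaling $\epsilon$, is exactly the error term permitted in the conclusion. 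The Fundamental Vanishing Theorem (Corollary \ref{f}) and Bloch's theorem on the disc (Theorem \ref{bloch}) serve to dispose of the degenerate configurations.

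First I would fix the jet geometry. Choose a uniformization $\pi\colon{\Bbb C}^n\to A$, $n=\dim A$, and a lift $\tilde f\colon\bigtriangleup(R)\to{\Bbb C}^n$ of $f$; the derivatives $\tilde f^{(1)},\dots,\tilde f^{(k)}$ are translation invariant, so the $k$-jet lift $j_kf=(f,\tilde f^{(1)},\dots,\tilde f^{(k)})$ is a well-defined holomorphic map $\bigtriangleup(R)\to J_k(A)=A\times{\Bbb C}^{nk}$; let $X_k$ denote the Zariski closure of its image. Since $f$ has Zariski dense image the projection $X_k\to A$ is dominant, and since the Zariski closure of $f$ is all of $A$ --- trivially a translate of a subtorus --- the alternative in Theorem \ref{bloch} (a general-type quotient on which $f$ would leave ${\mathcal E}_0$) does not arise. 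Because $D$ is ample, $A\setminus D$ is of log-general type, so the logarithmic cotangent tower of the pair $(A,D)$ is big.

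The core of the argument is the construction of a jet differential adapted to $D$. By the Siu--Yeung/Noguchi--Winkelmann--Yamanoi construction on $A\setminus D$ there are a positive integer $k_0$, depending only on $\dim A$ and $D$, a weight $m$, and a nonzero logarithmic jet differential of order $k_0$ along $D$ with values in a negative power of ${\mathcal O}_A(D)$, say ${\mathcal P}\in H^0(A,E_A^{k_0,m}(\log D)\otimes{\mathcal O}_A(-D))$, which moreover can be chosen with ${\mathcal P}|_{X_{k_0}}\not\equiv0$, so that $\xi:={\mathcal P}(j_{k_0}f)\not\equiv0$ (here $f$ being Zariski dense is used, and Corollary \ref{f} excludes the residual case in which $f$ would avoid $D$ altogether). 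The essential local property to extract from the construction is that, writing $s_D$ for the section defining $D$, the honest logarithmic jet differential $s_D\otimes{\mathcal P}$ pulls back to a function vanishing along $f^{-1}(D)$ to order governed by $\min(\nu_{z_0},k_0)$, where $\nu_{z_0}$ is the contact multiplicity of $f$ with $D$ at $z_0$; the ${\mathcal O}_A(-D)$-twist then forces the polar divisor of $\xi$ to be supported on $f^{-1}(D)$ with pole order at $z_0$ at most $(\nu_{z_0}-k_0)^+$. Feeding this into the Jensen formula and the First Main Theorem for $f$ relative to $D$ produces an inequality of the form
\begin{equation*}
T_{f,D}(r)\ \le\ N_f^{(k_0)}(r,D)\ +\ \int_0^{2\pi}\log^+|\xi(re^{i\theta})|\,\frac{d\theta}{2\pi}\ +\ O(1),
\end{equation*}
where $N_f^{(k_0)}$ is the counting function truncated at level $k_0$. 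Finally Theorem \ref{log}, applied to ${\mathcal P}$ with $c_{f,\omega_D}=0$, bounds the mean integral by $C(\epsilon\,T_{f,D}(r)+\epsilon\log r+\log T_{f,D}(r))$ outside a set $E$ with $\int_E\exp(\epsilon\,T_{f,D}(r))\,dr<\infty$; combining and rescaling $\epsilon$ yields the assertion.

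The main obstacle is the geometric input of the previous paragraph: building ${\mathcal P}$ with an \emph{explicit finite} $k_0$ --- a jet differential whose only poles are logarithmic along $D$, with values in ${\mathcal O}_A(-D)$ and non-trivial restriction to $X_{k_0}$ --- and then performing the local comparison of the orders of $f^{*}{\mathcal P}$ along $f^{-1}(D)$ with the truncated counting function. This is exactly where the ampleness of $D$ and the semi-abelian structure of $A\setminus D$ are indispensable, and where one must either reprove or import the jet estimates of \cite{SY2} and \cite{NWY}. Once that combinatorial-geometric step is secured, the rest is the now-routine Nevanlinna bookkeeping made available by Theorem \ref{log} together with Corollary \ref{f}.
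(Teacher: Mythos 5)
Your overall scheme coincides with the paper's: lift $f$ to its $k$-jet, produce a jet-level object that vanishes on the jets of $D$ but not identically on the Zariski closure $X_{k_0}$ of $j_{k_0}f(\bigtriangleup(R))$, then run Jensen plus the First Main Theorem, with every error term controlled by Theorem \ref{log} (legitimate here because $f\in\mathcal{E}_0$ forces the growth index to vanish), and with the truncation at level $k_0$ coming from the fact that only derivatives of order $\le k_0$ of a local equation of $D$ enter. The genuine gap is exactly the step you flag as ``the main obstacle'', and it is the heart of the theorem: you assert, by citation of \cite{SY2}, \cite{NWY}, \cite{NW}, the existence of a finite $k_0$ and of $\mathcal{P}\in H^0(A,E_A^{k_0,m}(\log D)\otimes\mathcal{O}_A(-D))$ with $\mathcal{P}|_{X_{k_0}}\not\equiv 0$. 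The non-vanishing on $X_{k_0}$ is a statement about the specific map $f$ (through the Zariski closure of its jet lift), and the cited results are proved for entire curves $f:\mathbb{C}\to A$; they cannot simply be imported for a disc map in $\mathcal{E}_0$. This is precisely what the paper proves from scratch as its Key Lemma: writing $I_k$ for the fiber projection of $J_k(A)=A\times\mathbb{C}^{nk}$ restricted to $X_k(f)$, there is $k_0$ with $I_k(X_k(f))\cap I_k(J_k(D))\neq I_k(X_k(f))$ for $k\ge k_0$. Its proof uses only the group structure (translating $f$ by $a-f(0)$), Noetherian descent on the subvarieties $V_k=p_{1,k}\bigl(J_k(D)\cap I_k^{-1}(I_k(j_kf(0)))\bigr)\subset D$, and a Taylor-series argument showing that if the chain never empties then a translate of $f$ maps $\bigtriangleup(R)$ into $D$, contradicting Zariski density --- an argument that works verbatim on the disc. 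Without this lemma (or a genuine reproof of the Siu--Yeung/Noguchi--Winkelmann--Yamanoi construction in the disc setting) your argument is not complete.

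Two further points of comparison. The paper does not use a globally twisted logarithmic jet differential at all: it takes a polynomial $P$ on the jet-fiber coordinates $\mathbb{C}^{nk_0}$ with $P|_{I_{k_0}(J_{k_0}(D))}\equiv 0$ and $P|_{I_{k_0}(X_{k_0}(f))}\not\equiv 0$, writes $I_{k_0}^*P=a_{\lambda 0}\sigma_\lambda+\cdots+a_{\lambda k_0}d^{k_0}\sigma_\lambda$ on an affine cover and divides by $\sigma_\lambda$; this single identity yields both the bound $m_f(r,D)\le\epsilon(T_{f,D}(r)+\log r)+O(\log T_{f,D}(r))$ and, via $N_f(r,D)-N_f^{(k_0)}(r,D)\le N_{P(I_{k_0}(j_{k_0}f))}(r,0)\le T_{P(I_{k_0}(j_{k_0}f))}(r)+O(1)$, the truncation --- so the pole-order count you sketch should be routed through this zero-counting, itself estimated by Theorem \ref{log}. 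Also, your parenthetical use of Corollary \ref{f} to ``exclude the case that $f$ avoids $D$'' is circular as stated: if $f(\bigtriangleup(R))\cap D=\emptyset$, Corollary \ref{f} forces $\mathcal{P}(j_{k_0}f)\equiv 0$ for every such $\mathcal{P}$, i.e.\ no $\mathcal{P}$ with $\mathcal{P}|_{X_{k_0}}\not\equiv 0$ can exist, so you cannot simultaneously invoke the construction and dispose of that case this way; in the paper no case distinction is needed, since the Key Lemma and the subsequent estimates apply whether or not $f$ meets $D$.
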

When $R=\infty$ then  $f\in {\mathcal E}_0.$ So the above theorem recovers the result of  Siu-Yeung \cite{SY2}. Note that in the case 
$R=\infty$,  K. Yamanoi \cite{Y} showed that one can indeed take $k_0=1$.  The proof here follows from the argument in 
the book by   Noguchi and Winkelmann (see Theorem 6.3.1 in \cite{NW}).

\begin{proof}
For $k\ge 1$, 
 let $X_k(f)$ be the Zariski closure of the image of the $k$-jet lifting $j_k(f)$ of 
$f$. Let $I_k$ denote the restriction to $X_k(f)$  of the jet projection 
$p_k: J_k(A)=A\times {\Bbb C}^{nk}\rightarrow  {\Bbb C}^{nk}$, where $n=\dim A$.
Let $x\in D$ and $\sigma=0$ be a local defining equation of $D$ near $x$. 
For a given holomorphic map $\phi: (\bigtriangleup(1), 0)
\rightarrow (A, x)$,  we denote its $k$-jet by $j_k(\phi)$ and write
$$d^j\sigma(\phi)={d^j\over d\zeta^j}|_{\zeta=0} \sigma(\phi(\zeta)).$$
We set $J_{k, x}(D)=\{j_k(\phi)\in J_k(A)~|~ d^j\sigma(\phi)=0, 1 \leq j \leq k\}$, and 
$J_k(D)=\cup_{x\in D} J_{k, x}(D)$.
To continue the proof, we need the following key lemma.

\noindent{\bf Key Lemma}. {\it There is $k_0\in {\Bbb N}$ such that for $k\ge k_0$
$$I_k(X_k(f))\cap I_k(J_k(D))\not=I_k(X_k(f)).$$}

\noindent{\it Proof}. It suffices to show that there is $k\in {\Bbb N}$ such that
$I_k(j_k(f)(0))\not\in I_k(J_k(D)).$
Suppose that $I_k(j_k(f)(0))\in I_k(J_k(D))$ for all integers $k\ge 0$.
 Then we have that 
$$J_k(D)\cap I_k^{-1}(I_k(j_k(f)(0)))\not=\emptyset$$
for all $k\ge 0$. 
Define $$V_k: =p_{1, k}(J_k(D)\cap I_k^{-1}(I_k(j_k(f)(0))))\not=\emptyset,$$
where $p_{1, k}$ is the projective $J_k(A)\rightarrow A$. 
Note that $V_k$ is Zariski closed (because $p_{1, k}: J_k(A)\rightarrow A$ has a section 
$id_A\times \{I_k(j_k(f)(0))\}: A \rightarrow J_k(A)$, and $V_k$ is the pull-back of supp$J_k(D)$
by this section), and note that $V_{k+1}\subset V_k$. Thus we have the sequence of Zariski closed set
$$\cdots\subset V_3\subset V_2 \subset V_1\subset D$$
that eventually stabilizes at the variety $V$. Since we are assuming that $V_k\not=\emptyset$, $V$ is not empty. 
Let $a\in V$, and translate $f$ by $a-f(0)$, i.e
${\tilde f}(z)=f(z)+a-f(0)$. Then by the construction of ${\tilde f}$, we have
 ${\tilde f}(0)=a$ and $j_k({\tilde f})(0) \in J^k(D)$. 
Considering the Taylor series, we get ${\tilde f}(\bigtriangleup(R))
\subset D$, and hence a contradiction since we are assuming that $f$ is non-degenerate.
Thus the lemma is proved.

Write  $Y_k:=I_k(X_k(f))$. Note that 
$I_k$ is proper, therefore $Y_k$ is an irreducible algebraic subset of ${\Bbb C}^{nk}$. 
By the key lemma, there is $k=k_0$ for which there is a polynomial $P$ on ${\Bbb C}^{nk}$ satisfying 
$$P|_{Y_k}\not\equiv 0, ~~~P|_{J_k(D)}\equiv 0.$$
Let $\{U_{\lambda}\}$ be an affine covering of $A$ such that $D\cap U_{\lambda}=\{\sigma_{\lambda}=0\}$
for a regular function $\sigma_{\lambda}$ on $U_{\lambda}$. The defining functions of 
$J_k(D)|_{U_{\lambda}}$ are given by 
$$\sigma_{\lambda}=d\sigma_{\lambda}=\cdots = d^k\sigma_{\lambda}=0.$$
On each $U_{\lambda}$ one obtains the following equation:
$$a_{\lambda 0}\sigma_{\lambda} + \cdots + a_{\lambda k}d^k\sigma_{\lambda}=I_k^*P|_{U_{\lambda}}.$$
Here $ a_{\lambda j}$ are polynomials in jet coordinates with
coefficients of rational holomorphic functions on $U_{\lambda}$ restricted on $J_k(A)|_{U_{\lambda}}$. 

Using a Hermitian metric on the line bundle $[D]$ associated to  $D$, we have positive
functions $\rho_{\lambda}\in C^{\infty}(U_{\lambda})$ such that ${|\sigma_{\lambda}|\over 
\rho_{\lambda}}= {|\sigma_{\mu}|\over 
\rho_{\mu}}$ on $U_{\lambda}\cap U_{\mu}$. Therefore
$$
\rho_{\lambda} a_{\lambda 0}  + \rho_{\lambda} a_{\lambda 1}{d\sigma_{\lambda}\over \sigma_{\lambda}}+\cdots+
\rho_{\lambda} a_{\lambda k}{d^k\sigma_{\lambda}\over \sigma_{\lambda}}={ \rho_{\lambda}\over 
\sigma_{\lambda}}I_k^*P|_{U_{\lambda}}.$$
Substituting $j_k(f)(z)$, $f(z)\in U_{\lambda}$ in the above equation, we have
\begin{eqnarray}\label{truncate}
&~&\left|
\rho_{\lambda}(f(z)) a_{\lambda 0}(f(z)) + \cdots + 
\rho_{\lambda}(f(z)) a_{\lambda k}(f(z)){{d^k\over dz^k}
\sigma_{\lambda}(f(z))\over \sigma_{\lambda}(f(z))}\right|\nonumber \\
&~&={ |\rho_{\lambda}(f(z))|\over 
|\sigma_{\lambda}(f(z))|}|P(I_k(J_k(f)(z)))|.
\end{eqnarray}
Let $\{\tau_{\lambda}\}$ be a partition of unity subordinated to the 
covering $\{U_{\lambda}\}$. Then 
\begin{eqnarray*} 
&~&{1\over \|\sigma(f(z))\|}\leq {1\over |P(I_k(J_k(f)(z)))|}\\
&~&\times \sum_{\lambda} \left\{\tau_{\lambda}\rho_{\lambda}
|a_{\lambda 0}| + \cdots +  \tau_{\lambda}  \rho_{\lambda}
 |a_{\lambda k}| \left|{{d^k\over dz^k}
\sigma_{\lambda}(f(z))\over \sigma_{\lambda}(f(z))}\right|\right\}.
\end{eqnarray*}
Since $a_{\lambda j}$ are polynomials in jet coordinates with coefficients of
holomorphic functions on $U_{\lambda}$, Theorem \ref{log}  with $\epsilon$ properly chosen yields that
\begin{eqnarray*}
m_f(r, D)&\leq&   C\left( m_{1/ P(I_k(J_k(f)))}(r, \infty)+ \sum_{\lambda, 1 \leq j \leq k}m_{{(\sigma_{\lambda}\circ f)^{(j)}\over \sigma_{\lambda}\circ f}}(r, \infty)\right)\\
&~&+  \epsilon(T_{f, D}(r)+ \log r) + O(\log T_{f, D}(r))
\end{eqnarray*}
holds for $r\in (0, R)$ except a set $E$ with $\int_E \exp(\epsilon T_{f, D}(r)) dr<\infty$, where
 $C>0$ is a constant.
Since $\sigma_{\lambda}$ is a rational function on $A$, $d^j\sigma_{\lambda}/\sigma_{\lambda}$ 
is a logarithmic jet differential carrying logarithmic poles on zeros and poles of 
$\sigma_{\lambda}$. It follows, from Theorem \ref{log} with  $\epsilon$ properly chosen (notice that $c_{f, \omega_D}$ is arbitrarily small in our case),
$$
m_{{(\sigma_{\lambda}\circ f)^{(j)}\over \sigma_{\lambda}\circ f}}(r, \infty)
\leq  \epsilon(T_{f, D}(r)+ \log r) + O(\log T_{f, D}(r))
$$
holds for $r\in (0, R)$ except a set $E$ with $\int_E \exp(\epsilon T_{f, D}(r)) dr<\infty$.
Moreover the First Main Theorem and Theorem \ref{log}  with $\epsilon$ properly chosen imply that 
$$m_{1/ P(I_k(J_k(f)))}(r, \infty)
\leq T_{P(I_k(J_k(f)))}(r)+O(1) \leq  \epsilon (T_{f, D}(r) + \log r)+ O(\log T_{f, D}(r)) $$
holds for $r\in (0, R)$ except a set $E$ with $\int_E \exp(\epsilon T_{f, D}(r)) dr<\infty$.
Thus 
\begin{equation}\label{appro}
m_f(r, D)\leq  \epsilon(T_{f, D}(r) + \log r)+ O(\log T_{f, D}(r))
\end{equation}
holds for $r\in (0, R)$ except a set $E$ with $\int_E \exp(\epsilon T_{f, D}(r)) dr<\infty$.
It is inferred from Theorem \ref{log} with  $\epsilon$ properly chosen   and (\ref{truncate}) that 
\begin{eqnarray*}
N_f(r, D) -N_f^{(k)}(r, D) &\leq& N_{P(I_k(J_k(f)))}(r, 0)\leq T_{P(I_k(J_k(f)))}(r)+O(1)\\
&\leq&  \epsilon T_{f, D}(r) +\epsilon \log r+
O(\log T_{f, D}(r))
\end{eqnarray*}
holds for $r\in (0, R)$ except a set $E$ with $\int_E \exp(\epsilon T_{f, D}(r)) dr<\infty$.
Hence, from the First Main Theorem and (\ref{appro}),
$$T_{f, D}(r) = N_f(r, D) + m_f(r, D) \leq N_f^{(k)}(r, D) +   2\epsilon (T_{f, D}(r) + \log r)+O(\log T_{f, D}(r))$$
holds for $r\in (0, R)$ except a set $E$ with $\int_E \exp(\epsilon T_{f, D}(r)) dr<\infty$.
This finishes the proof.
\end{proof}

\end{document}